\documentclass[a4paper,english,oneside]{article}
\usepackage[T1]{fontenc}
\usepackage[utf8]{inputenc} 

\usepackage{geometry}
\geometry{a4paper,top=2cm,bottom=2cm,left=2cm,right=2cm,%
heightrounded,bindingoffset=5mm}

\usepackage{amsmath}
\usepackage{amsfonts}
\usepackage{amssymb}
\usepackage{amsthm}
\usepackage[english]{babel}
\usepackage{color}
\usepackage{hyperref}
\hypersetup{hidelinks}

\usepackage[cal=boondoxo,bb=ams]{mathalfa}

\theoremstyle{plain}
\newtheorem{theorem}{Theorem}[section]
\newtheorem{lemma}[theorem]{Lemma}
\newtheorem{corollary}[theorem]{Corollary}
\newtheorem{proposition}[theorem]{Proposition}

\theoremstyle{definition}
\newtheorem{definition}[theorem]{Definition}

\theoremstyle{remark}
\newtheorem{remark}{Remark}

 \date{}
     
\begin{document}

\title{Blow -- up results for semilinear damped wave equations in Einstein~--~de~Sitter spacetime}

\author{Alessandro Palmieri}
\maketitle

\begin{abstract}
We prove by using an iteration argument some blow-up results for a semilinear damped wave equation in generalized Einstein-de Sitter spacetime with a time-dependent coefficient for the damping term and power nonlinearity. Then, we conjecture an expression for the critical exponent due to the main blow-up results, which is consistent with many special cases of the considered model and provides a natural generalization of Strauss exponent. In the critical case, we consider a non-autonomous and parameter dependent Cauchy problem for a linear ODE of second order, whose explicit solutions are determined by means of special functions' theory.
\end{abstract}

\begin{flushleft}
\textbf{Keywords} Semilinear damped wave equation, Einstein -- de Sitter spacetime, power nonlinearity, generalized Strauss exponent, lifespan estimates, modified Bessel functions
\end{flushleft}

\begin{flushleft}
\textbf{AMS Classification (2020)} Primary: 35B44, 35L05, 35L71; Secondary: 35B33, 33C10
\end{flushleft}


\section{Introduction}

In recent years, the wave equation in Einstein --  de Sitter spacetime has been considered in \cite{GalKinYag10,GalYag14} in the linear case and in \cite{GalYag15,GalYag17EdS,PTY20} in the semilinear case. Let us consider the semilinear wave equation with power nonlinearity in a \emph{generalized Einstein -- de Sitter spacetime}, that is, the equation with singular coefficients
\begin{align} \label{Semi EdeS k Psi}
\varphi_{tt} -t^{-2k} \Delta \varphi +2t^{-1} \varphi_t =|\varphi|^p,
\end{align} where $k\in [0,1)$ and $p>1$. This model is the semilinear wave equation in  Einstein -- de Sitter spacetime  with power nonlinearity for $k=2/3$ and $n=3$. It has been proved in \cite{GalYag17EdS,PTY20}  that for $$1<p\leqslant\max\big\{p_0\big(k,n+\tfrac{2}{1-k}\big),p_1(k,n)\big\}$$ a local in time solution to the corresponding Cauchy problem (with initial data prescribed at the initial time $t=1$) blows up in finite time, provided that the initial data fulfill certain integral sign conditions. More specifically, in \cite{GalYag17EdS} the subcritical case for \eqref{Semi EdeS k Psi} is investigated, while in \cite{PTY20} the critical case and the upper bound estimates for the lifespan are studied. Here and throughout the paper $p_0(k,n)$ is the positive root of the quadratic equation
\begin{align}\label{intro equation critical exponent general case}
\left(\tfrac{n-1}{2} -\tfrac{k}{2(1-k)}\right)p^2- \left(\tfrac{n+1}{2} +\tfrac{3k}{2(1-k)}\right)p -1=0,
\end{align} when the coefficient for $p^2$ is not positive, we set formally $p_0(k,n)\doteq \infty$, 
 while 
\begin{align} \label{intro def p1}
p_1(k,n) \doteq 1+ \frac{2}{(1-k)n}.
\end{align} Note that $p_1(k,n)$ is related to the Fujita exponent $p_{\mathrm{Fuj}}(n)\doteq 1+\frac{2}{n}$. Indeed, according to this notation, it holds $p_1(k,n)=p_{\mathrm{Fuj}}\big((1-k)n\big)$ and $p_1(0,n)=p_{\mathrm{Fuj}}(n)$. On the other hand, $p_0(k,n)$ is a generalization of the Strauss exponent for the classical semilinear wave equation, since $p_0(0,n)=p_{\mathrm{Str}}(n)$, where $p_{\mathrm{Str}}(n)$ is the positive root of the quadratic equation $(n-1)p^2-(n+1)p-2=0$.

In this paper, we generalize the model \eqref{Semi EdeS k Psi} with a general multiplicative constant $\mu$ for the damping term. More specifically, we investigate the blow -- up dynamic for the Cauchy problem 
\begin{align}\label{Semi EdeS k damped} 
\begin{cases}  u_{tt} - t^{-2k}\Delta u+\mu \, t^{-1} u_t= |u|^p & x\in \mathbb{R}^n, \ t\in (1,T), \\
u(1,x)= \varepsilon u_0(x) & x\in \mathbb{R}^n, \\
 u_t(1,x)= \varepsilon u_1(x) & x\in \mathbb{R}^n,
\end{cases}
\end{align} where $k\in [0,1)$, $p>1$, $\mu$ is the nonnegative multiplicative constant in the time -- dependent coefficient for the damping term and $\varepsilon>0$ describes the size of the initial data. Let us point out that the not damped case $\mu=0$ can be treated as well via our approach.

 More precisely,  we will focus on proving  blow-up results whenever the exponent $p$ belongs to the range  $$1<p\leqslant \max\big\{p_0\big(k,n+\tfrac{\mu}{1-k}\big),p_1(k,n)\big\},$$ clearly, under suitable sign assumptions for $u_0,u_1$. According to \eqref{intro equation critical exponent general case}, the shift $p_0\big(k,n+\tfrac{\mu}{1-k}\big)$ of $p_0(k,n)$ is nothing but the positive root to the quadratic equation
 \begin{align}\label{intro equation critical exponent shifted}
\left(\tfrac{n-1}{2} +\tfrac{\mu-k}{2(1-k)}\right)p^2- \left(\tfrac{n+1}{2} +\tfrac{\mu+3k}{2(1-k)}\right)p -1=0.
\end{align} Therefore, the critical exponent $p_0\big(k,n+\tfrac{\mu}{1-k}\big)$ for \eqref{Semi EdeS k damped} is obtained by the corresponding exponent in the not damped case via a formal shift in the dimension of magnitude $\tfrac{\mu}{1-k}$.

Let us provide  an overview on the methods that we are going to use to prove the main results in this paper. In the subcritical case $1<p< \max\big\{p_0\big(k,n+\tfrac{\mu}{1-k}\big),p_1(k,n)\big\}$, we employ a standard iteration argument based on a multiplier argument (see also \cite{LT18Scatt,LT18Glass,LT18ComNon,LinTu19} for further details on the multiplier argument). This approach is based on the employment of two time -- dependent functionals related to a local solution $u$ to \eqref{Semi EdeS k damped} and generalizes the method from \cite{TuLin17} for the semilinear wave equation with scale -- invariant damping. The first functional is the space average of $u$ and its dynamic will be considered for the iterative argument. On the other hand, we will work with a positive solution of the adjoint linear equation in order to prove the positivity of the second auxiliary functional. Hence, this second functional will also provide a first lower bound estimate for the first functional, allowing us to begin with the iteration procedure. In the critical case we should sharpen our iteration frame by considering a different time -- dependent functional, so that a slicing procedure may be applied. In comparison to what happens in the subcritical case, a more precise analysis of the adjoint linear equation is necessary in the critical case $p=p_0\big(k,n+\tfrac{\mu}{1-k}\big)$. This approach follows the one developed in \cite{PTY20} which is in turn a generalization of the ideas introduced by Wakasa and Yordanov in \cite{WakYor18,WakYor18Damp} an developed in different frameworks in \cite{PalTak19,PalTak19mix,LinTu19,ChenPal19MGT,ChenPal19SWENM}.
Whereas in the other critical case $p=p_1(k,n)$, we can still work with the space average of a local in time solution as functional, although a slicing procedure has to be applied in order to deal with logarithmic factors in the lower bound estimates.

\subsection{Notations}

Throughout this paper we use the following notations: $\phi_k(t)\doteq \frac{t^{1-k}}{1-k}$ denotes a distance function produced by the speed of propagation $a_k(t)=t^{-k}$, while the amplitude of the light cone is given by the function 
\begin{align} \label{def A k}
A_k(t)\doteq \int_1^t \tau^{-k} \mathrm{d}\tau = \phi_k(t) -\phi_k(1);
\end{align} the ball in $\mathbb{R}^n$ with radius $R$ around the origin is denoted $B_R$; $f\lesssim g$ means that there exists a positive constant $C$ such that $f\leqslant C g$ and, similarly, for $f\gtrsim g$; $\mathrm{I}_\nu$ and $\mathrm{K}_\nu$ denote the modified Bessel function of first and second kind of order $\nu$, respectively; finally, as in the introduction, $p_0(k,n)$ is the positive solution to \eqref{intro equation critical exponent general case} and $p_1(k,n)$ is defined by \eqref{intro def p1}.

\subsection{Main results}


Before stating the main theorems, let us introduce a suitable notion of energy solution to the semilinear Cauchy problem \eqref{Semi EdeS k damped}.

\begin{definition} \label{Def energy sol} Let $u_0\in H^1(\mathbb{R}^n)$ and $u_1\in L^2(\mathbb{R}^n)$. We say that 
\begin{align*}
u\in \mathcal{C} \big([1,T), H^1(\mathbb{R}^n)\big) \cap \mathcal{C}^1 \big([1,T), L^2(\mathbb{R}^n)\big)\cap L^p_{\mathrm{loc}}\big([1,T)\times \mathbb{R}^n\big)
\end{align*} is an energy solution to \eqref{Semi EdeS k damped} on $[1,T)$ if $u$ fulfills $u(1,\cdot)=\varepsilon u_0$ in $H^1(\mathbb{R}^n)$ and the integral relation
\begin{align}
& \int_{\mathbb{R}^n}  \partial_t u(t,x) \psi (t,x) \,  \mathrm{d}x  -\varepsilon \int_{\mathbb{R}^n} u_1(x)  \psi (1,x) \,  \mathrm{d}x -\int_1^t\int_{\mathbb{R}^n}     \partial_t u (s,x) \psi_{s}(s,x) \, \mathrm{d}x \, \mathrm{d}s \notag \\ & \qquad 
+ \int_1^t\int_{\mathbb{R}^n}  s^{-2k} \nabla u (s,x) \cdot\nabla \psi (s,x) \, \mathrm{d}x \, \mathrm{d}s+ \int_1^t\int_{\mathbb{R}^n} \mu s^{-1} \partial_t u (s,x)\psi(s,x) \, \mathrm{d}x \, \mathrm{d}s \notag  \\ 
&\quad  = \int_1^t\int_{\mathbb{R}^n}  |u(s,x)|^p \psi (s,x) \, \mathrm{d} x \, \mathrm{d} s \label{integral identity def energy sol}
\end{align} for any test function $\psi\in \mathcal{C}_0^\infty([1,T)\times \mathbb{R}^n)$ and any $t\in (1,T)$.
\end{definition}

We point out that performing a further step of integration by parts in \eqref{integral identity def energy sol}, we find the integral relation
\begin{align}
& \int_{\mathbb{R}^n}  \partial_t u(t,x) \psi (t,x) \,  \mathrm{d}x - \int_{\mathbb{R}^n} u(t,x)\psi_s(t,x) \, \mathrm{d}x  + \int_{\mathbb{R}^n} \mu\, t^{-1} u(t,x) \psi(t,x) \,\mathrm{d}x \notag \\
& \quad -\varepsilon \int_{\mathbb{R}^n} u_1(x)  \psi (1,x) \,  \mathrm{d}x + \varepsilon \int_{\mathbb{R}^n} u_0(x) \psi_s(1,x) \, \mathrm{d}x  - \varepsilon \int_{\mathbb{R}^n} \mu\,  u_0(x) \psi(1,x) \,\mathrm{d}x \notag \\ & \quad 
+\int_1^t\int_{\mathbb{R}^n}   u (s,x)  \left( \psi_{ss}(s,x) -s^{-2k} \Delta \psi (s,x)-\mu s^{-1} \psi_s(s,x)+\mu s^{-2}\psi(s,x)\right) \mathrm{d}x \, \mathrm{d}s \notag  \\ 
&\quad  = \int_1^t\int_{\mathbb{R}^n}  |u(s,x)|^p \psi (s,x) \, \mathrm{d} x \, \mathrm{d} s \label{integral identity  weak sol}
\end{align} for any $\psi\in \mathcal{C}_0^\infty([1,T)\times \mathbb{R}^n)$ and any $t\in (1,T)$.

\begin{remark}\label{Remark support} Let us point out that if the Cauchy data have compact support, say $\mathrm{supp}\, u_j \subset B_R$ for $j=0,1$ and for some $R>0$, then, for any $t\in (1,T)$ a local solution $u$ to \eqref{Semi EdeS k damped}  the support condition $$\mathrm{supp} \, u(t, \cdot) \subset B_{R+A_k(t)} $$ is satisfied, where $A_k$ is defined by \eqref{def A k}. Consequently, in Definition \ref{Def energy sol} it is possible to consider test functions which are not compactly supported, i.e., $\psi \in \mathcal{C}^\infty([1,T)\times \mathbb{R}^n)$. 
\end{remark}

\begin{theorem}[Subcritical case]\label{Theorem subcritical case}
Let $\mu\geqslant 0$ and let the exponent of the nonlinear term $p$ satisfy $$1<p< \max\left\{p_0\big(k,n+\tfrac{\mu}{1-k}\big),p_1(k,n)\right\}.$$ Let us assume that $u_0\in H^1(\mathbb{R}^n)$ and $u_1\in L^2(\mathbb{R}^n)$ are nonnegative, nontrivial and compactly supported functions with supports contained in $B_R$ for some $R>0$. Let $$u\in \mathcal{C} \big([1,T), H^1(\mathbb{R}^n)\big) \cap \mathcal{C}^1 \big([1,T), L^2(\mathbb{R}^n)\big)\cap L^p_{\mathrm{loc}}\big([1,T)\times \mathbb{R}^n\big)$$ be an energy solution to \eqref{Semi EdeS k damped} according to Definition \ref{Def energy sol}  with lifespan $T=T(\varepsilon)$ and satisfying the support condition $\mathrm{supp} \, u(t,\cdot)\subset B_{A_k(t)+R}$ for any $t\in (1,T)$.

 Then, there exists a positive constant $\varepsilon_0~=~\varepsilon_0(u_0,u_1,n,p,k,\mu,R)$ such that for any $\varepsilon\in (0,\varepsilon_0]$ the energy solution $u$ blows up in finite time. Moreover, the upper bound estimate for the lifespan
\begin{align}\label{upper bound est lifespan subcrit thm}
T(\varepsilon)\leqslant \begin{cases} C\varepsilon^{-\frac{p(p-1)}{\theta(n,k,\mu,p)}} & \mbox{if} \ p<p_0\big(k,n+\tfrac{\mu}{1-k}\big), \\ C\varepsilon^{-\left(\frac{2}{p-1}-(1-k)n\right)^{-1}} & \mbox{if} \ p<p_1(k,n),
\end{cases}
\end{align} holds, where the positive constant $C$ is independent of $\varepsilon$ and
\begin{align*}
\theta(n,k,\mu,p)\doteq 1-k+\left((1-k)\tfrac{n+1}{2}+\tfrac{\mu+3k}{2} \right)p-\left((1-k)\tfrac{n-1}{2}+\tfrac{\mu-k}{2} \right)p^2.
\end{align*}
\end{theorem}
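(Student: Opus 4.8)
The plan is to reduce the blow-up problem to a nonlinear integral inequality for the space average of the solution, and then to amplify an explicit lower bound through an iteration procedure whose exponents diverge in finite time. First I would introduce
$$U(t) \doteq \int_{\mathbb{R}^n} u(t,x)\,\mathrm{d}x .$$
Choosing the (non compactly supported, but admissible by Remark \ref{Remark support}) test function $\psi\equiv 1$ in \eqref{integral identity def energy sol}, the Laplacian term drops out by the divergence theorem and, after one differentiation in $t$, one obtains
$$U''(t) + \mu\, t^{-1}\, U'(t) = \int_{\mathbb{R}^n} |u(t,x)|^p\,\mathrm{d}x \geqslant 0 .$$
Multiplying by the integrating factor $t^{\mu}$ gives $(t^{\mu}U')' = t^{\mu}\int_{\mathbb{R}^n}|u|^p\,\mathrm{d}x \geqslant 0$; since $U'(1)=\varepsilon\int_{\mathbb{R}^n} u_1\,\mathrm{d}x\geqslant 0$ and $U(1)=\varepsilon\int_{\mathbb{R}^n} u_0\,\mathrm{d}x>0$ by the sign hypotheses, a double integration yields both the positivity $U>0$ and the base inequality
$$U(t) \geqslant \int_1^t r^{-\mu}\!\int_1^r s^{\mu}\Big(\int_{\mathbb{R}^n} |u(s,x)|^p\,\mathrm{d}x\Big)\,\mathrm{d}s\,\mathrm{d}r .$$
Combining the support condition $\supp u(t,\cdot)\subset B_{A_k(t)+R}$ with Hölder's inequality, namely $|U(t)|^p\leqslant |B_{A_k(t)+R}|^{\,p-1}\int_{\mathbb{R}^n}|u|^p\,\mathrm{d}x$ together with $A_k(t)+R\lesssim t^{1-k}$, closes this into $\int_{\mathbb{R}^n}|u|^p\,\mathrm{d}x\gtrsim t^{-(1-k)n(p-1)}U(t)^p$.

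For the Fujita range $1<p<p_1(k,n)$ I would iterate directly. Feeding a lower bound $U(t)\geqslant C_j\,t^{a_j}$ into the base inequality yields a new bound $U(t)\geqslant C_{j+1}\,t^{a_{j+1}}$ with $a_{j+1}=p\,a_j+2-(1-k)n(p-1)$, whose increment $2-(1-k)n(p-1)$ is positive precisely because $p<p_1(k,n)$. Starting from the constant bound $U\geqslant\varepsilon\int_{\mathbb{R}^n}u_0\,\mathrm{d}x$ (so $a_0=0$) and tracking $\log C_j$, which grows geometrically like $p^{j}$, the sequence of lower bounds diverges for every $t$ exceeding an explicit threshold proportional to a negative power of $\varepsilon$; letting $j\to\infty$ at a fixed such $t$ forces $U$ to be infinite there, and reading off the threshold gives $T(\varepsilon)\lesssim\varepsilon^{-(2/(p-1)-(1-k)n)^{-1}}$.

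For the Strauss range $1<p<p_0(k,n+\tfrac{\mu}{1-k})$ the constant starting bound is too weak, and I would instead generate a polynomially growing one from a second functional tuned to the wave structure. Let $\varphi(x)\doteq\int_{S^{n-1}} e^{x\cdot\omega}\,\mathrm{d}\omega$, so that $\Delta\varphi=\varphi$ and $\varphi(x)\sim |x|^{-(n-1)/2}e^{|x|}$ as $|x|\to\infty$, and set $V(t)\doteq\int_{\mathbb{R}^n} u(t,x)\varphi(x)\,\mathrm{d}x$. Testing \eqref{integral identity def energy sol} against $\varphi$ and integrating the elliptic term by parts gives
$$V''(t)+\mu\, t^{-1}V'(t)-t^{-2k}V(t)=\int_{\mathbb{R}^n}|u(t,x)|^p\,\varphi(x)\,\mathrm{d}x\geqslant 0 .$$
To exploit this I would pair the inequality with a positive solution $\lambda$ of the adjoint homogeneous equation $\lambda''-\mu t^{-1}\lambda'+(\mu t^{-2}-t^{-2k})\lambda=0$; under the substitution $z=\phi_k(t)$ this becomes a modified Bessel equation, whose decaying solution is $\mathrm{K}_\nu$ for a suitable order $\nu=\nu(k,\mu)$. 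A Wronskian argument then transfers the exponential growth of the companion solution to $V$, giving $V(t)\gtrsim\varepsilon\, e^{\phi_k(t)}$ up to algebraic factors in $t$. Since Hölder's inequality gives $V(t)^p\leqslant\big(\int_{\mathbb{R}^n}|u|^p\,\mathrm{d}x\big)\big(\int_{B_{A_k(t)+R}}\varphi^{p'}\,\mathrm{d}x\big)^{p-1}$ with $\int_{B_{A_k(t)+R}}\varphi^{p'}\,\mathrm{d}x\sim e^{p'(A_k(t)+R)}(A_k(t)+R)^{(n-1)(1-p'/2)}$, the relations $(p-1)p'=p$ and $A_k(t)=\phi_k(t)-\phi_k(1)$ make the exponentials cancel, leaving a purely polynomial lower bound $\int_{\mathbb{R}^n}|u|^p\,\mathrm{d}x\gtrsim\varepsilon^p\,t^{\sigma}$. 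Inserting it into the base inequality for $U$ as an improved starting estimate and iterating as before produces $T(\varepsilon)\lesssim\varepsilon^{-p(p-1)/\theta(n,k,\mu,p)}$, with $\theta(n,k,\mu,p)$ exactly recording the surviving algebraic power.

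The main obstacle is the analysis of the adjoint equation in the variable $z=\phi_k(t)$ and the resulting Bessel asymptotics: one has to pin down the order $\nu(k,\mu)$, verify that $\mathrm{K}_\nu$ furnishes a genuinely positive and decaying multiplier, and quantify the growth rate of its companion solution sharply, because it is the interplay of this rate with the exponential concentration of $\varphi^{p'}$ near the light cone $|x|\approx A_k(t)$ that must cancel to expose the algebraic power encoded in $\theta(n,k,\mu,p)$ and hence the threshold $p_0(k,n+\tfrac{\mu}{1-k})$. By contrast, once the sharp starting estimates are in hand, the iteration itself — tracking the affine recursion $a_{j+1}=p\,a_j+\mathrm{const}$ for the exponents and the geometric growth of the constants $C_j$, then optimizing the divergent lower bound over $t$ to extract the lifespan — is routine bookkeeping.
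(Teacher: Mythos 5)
Your proposal is correct and follows essentially the same route as the paper: the same iteration frame for the space average obtained by testing with $\psi\equiv 1$ and Jensen/H\"older on the light cone, the same Yordanov--Zhang-type second functional resolved through the adjoint ODE and modified Bessel function $\mathrm{K}_\nu$ (your Wronskian pairing $\frac{\mathrm{d}}{\mathrm{d}t}\big[\lambda V'-\lambda'V+\mu t^{-1}\lambda V\big]\geqslant 0$ is exactly the paper's first-order inequality for $U_1=\varrho V$, just with the weight $\varrho(t)$ kept outside the functional), the same exponential cancellation against $\varphi^{p'}$, and the same two starting bounds feeding one iteration scheme for the two branches. The only differences are cosmetic bookkeeping, e.g.\ your single-power ansatz $C_j t^{a_j}$ versus the paper's $D_j t^{-a_j}(t-T_1)^{b_j}$.
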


In order to properly state the results in the critical case, let us explicit provide the threshold for $\mu$ which yields the transition from a dominant $p_0\big(k,n+\tfrac{\mu}{1-k}\big)$ to the case in which $p_1(k,n)$ is the highest exponent. Due to the fact that $p_0\big(k,n+\tfrac{\mu}{1-k}\big)$ is the biggest solution of \eqref{intro equation critical exponent shifted}, we have that $p_1(k,n)>p_0\big(k,n+\tfrac{\mu}{1-k}\big)$ if and only if 
\begin{align*}
\left(\tfrac{n-1}{2} +\tfrac{\mu-k}{2(1-k)}\right)p_1(k,n)^2- \left(\tfrac{n+1}{2} +\tfrac{\mu+3k}{2(1-k)}\right)p_1(k,n) -1>0.
\end{align*} By straightforward computations, it follows that $p_1(k,n)>p_0\big(k,n+\tfrac{\mu}{1-k}\big)$ for $\mu >\mu_0(k,n)$, 
where
\begin{align} \label{def mu_0(k,n)}
\mu_0(k,n) \doteq \frac{(1-k)^2n^2+(1-k)(1+2k)n+2}{n(1-k)+2}.
\end{align} Note that for $k=0$ the splitting value $\mu_0(k,n)$ does coincide with the one for the semilinear wave equation with scale -- invariant damping in the flat case from the work \cite{IS17}.

\begin{theorem}[Critical case: part I] \label{Theorem critical case p0}
Let $0\leqslant\mu\leqslant \mu_0(k,n)$ such that $ \mu\leqslant k$ or $\mu\geqslant 2-k$. We consider $p=p_0\big(k,n+\tfrac{\mu}{1-k}\big)$. Let us assume that $u_0\in H^1(\mathbb{R}^n)$ and $u_1\in L^2(\mathbb{R}^n)$ are nonnegative, nontrivial and compactly supported functions with supports contained in $B_R$ for some $R>0$. Let $$u\in \mathcal{C} \big([1,T), H^1(\mathbb{R}^n)\big) \cap \mathcal{C}^1 \big([1,T), L^2(\mathbb{R}^n)\big)\cap L^p_{\mathrm{loc}}\big([1,T)\times \mathbb{R}^n\big)$$ be an energy solution to \eqref{Semi EdeS k damped} according to Definition \ref{Def energy sol}  with lifespan $T=T(\varepsilon)$ and satisfying the support condition $\mathrm{supp} \, u(t,\cdot)\subset B_{A_k(t)+R}$ for any $t\in (1,T)$.

 Then, there exists a positive constant $\varepsilon_0~=~\varepsilon_0(u_0,u_1,n,p,k,\mu,R)$ such that for any $\varepsilon\in (0,\varepsilon_0]$ the energy solution $u$ blows up in finite time. Moreover, the upper bound estimate for the lifespan
\begin{align*}
T(\varepsilon)\leqslant \exp\left(C\varepsilon^{-p(p-1)}\right)
\end{align*} holds, where the positive constant $C$ is independent of $\varepsilon$.
\end{theorem}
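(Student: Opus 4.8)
The plan is to adapt the Wakasa--Yordanov test-function machinery, in the form developed in \cite{PTY20}, to the damped operator in \eqref{Semi EdeS k damped}. The cornerstone is an \emph{exact} positive solution $b=b(t,x)$ of the adjoint linear equation, i.e.\ of
\begin{align*}
b_{tt}-t^{-2k}\Delta b-\mu t^{-1}b_t+\mu t^{-2}b=0,
\end{align*}
which is precisely the operator acting on the test function in the twice integrated identity \eqref{integral identity  weak sol}. I would look for $b$ by separation of variables, writing $b(t,x)=\Theta(t)\,\varphi(x)$ with $\Delta\varphi=\varphi$ (so $\varphi(x)=\int_{S^{n-1}}e^{x\cdot\omega}\,\mathrm{d}\omega$, whose large-$|x|$ asymptotics $\varphi(x)\sim c_n|x|^{-(n-1)/2}e^{|x|}$ are classical), possibly carrying an extra spectral parameter. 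Passing to the light-cone variable $z=\phi_k(t)=\frac{t^{1-k}}{1-k}$ and applying the gauge $\Theta=z^{\beta}w$ with $\beta=\tfrac{1+\mu}{2(1-k)}$, the temporal factor reduces to the modified Bessel equation of order $\nu$, and a direct computation gives $\nu=\frac{|1-\mu|}{2(1-k)}$; selecting the decaying branch $\mathrm{K}_\nu$ yields a positive test function with the controlled asymptotics $\Theta(t)\sim \phi_k(t)^{\beta-1/2}e^{-\phi_k(t)}$. This is exactly where the hypothesis $\mu\leqslant k$ or $\mu\geqslant 2-k$ should enter: it is equivalent to $\nu\geqslant\frac12$, the regime in which $\mathrm{K}_\nu$ has the monotonicity and clean asymptotic expansion needed for the iteration (with the closed form $\mathrm{K}_{1/2}$ at the two endpoints).

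With $b$ in hand I would test \eqref{integral identity  weak sol} with $\psi=b$; since the adjoint operator annihilates $b$, the bulk double integral drops out and one is left with an identity expressing $\int_1^t\!\int_{\mathbb{R}^n}|u|^p\,b\,\mathrm{d}x\,\mathrm{d}s$ in terms of boundary data at time $t$ and the nonnegative, nontrivial Cauchy data at $t=1$, the latter furnishing the positive seed of the iteration. Introducing the functional
\begin{align*}
y(t)\doteq \int_1^t\!\int_{\mathbb{R}^n}|u(s,x)|^p\,b(s,x)\,\mathrm{d}x\,\mathrm{d}s,
\end{align*}
and combining H\"older's inequality with the finite-propagation bound $\supp u(s,\cdot)\subset B_{A_k(s)+R}$ (so that the cone volume is $\lesssim (A_k(s)+R)^n\sim\phi_k(s)^n$) together with the Bessel asymptotics of $b$, I expect to close a nonlinear ordinary differential inequality for $y$. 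The exact balance between the powers of $\phi_k$ supplied by the weight $b$ and those supplied by the cone volume is what renders $p=p_0\big(k,n+\tfrac{\mu}{1-k}\big)$, defined by \eqref{intro equation critical exponent shifted}, critical: at this value the naive non-slicing iteration produces a geometric sequence whose growth rate degenerates.

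To overcome this degeneracy I would run a slicing argument, fixing an increasing sequence $t_j=t_0(2-2^{-j})$ exhausting $[t_0,2t_0)$ and iterating the differential inequality on consecutive slices so as to upgrade the lower bound for $y$ into estimates of the form $y(t)\geqslant C_j\,\big(\log(t/t_j)\big)^{\sigma_j}$, with exponents $\sigma_j$ growing geometrically. The heart of the matter is the bookkeeping of the constants $C_j$: one must show that $\log C_j$ obeys a recursion whose solution stays bounded below rather than collapsing to $0$, so that $j\to\infty$ forces $y(t)=+\infty$ for all sufficiently large $t$ and the solution cannot be global; tracking how $\varepsilon$ enters the seed then yields $T(\varepsilon)\leqslant\exp\big(C\varepsilon^{-p(p-1)}\big)$, the exponent $p(p-1)$ being the signature of the critical Strauss-type threshold. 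I expect the main obstacle to be twofold: establishing sharp two-sided asymptotics for $b$ (and its time derivative) uniformly in the cone, so that the weight contributes exactly the critical power and no more, and the delicate constant-tracking in the slicing that separates genuine logarithmic divergence from spurious convergence. The algebraic identity \eqref{intro equation critical exponent shifted} should emerge precisely from demanding that the exponent recursion be marginally divergent.
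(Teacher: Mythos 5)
Your outline reproduces the \emph{subcritical} machinery of Section \ref{Section subcritical case} and stops exactly where the critical difficulty begins. A single separated positive solution $b(t,x)=\Theta(t)\varphi(x)$ of the adjoint equation is precisely the function $\Psi=\varrho(t)\varphi(x)$ of \eqref{def Psi}, and the paper shows what it buys: a polynomial first lower bound and the frame whose iteration, at $p=p_0\big(k,n+\tfrac{\mu}{1-k}\big)$, stalls because the exponent $\theta(n,k,\mu,p)/(p-1)$ in \eqref{final lb U0 subcrit} vanishes. Slicing alone cannot rescue this: slicing applied to that frame generates logarithms only when $(1-k)n(p-1)=2$, i.e.\ at $p=p_1(k,n)$ (this is exactly Section \ref{Section critical case p1}), not at $p=p_0$. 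The actual proof requires the full Wakasa--Yordanov apparatus that your plan only gestures at with ``possibly carrying an extra spectral parameter'': one needs the whole one-parameter family of kernels $y_0(t,s;\lambda,k,\mu)$, $y_1(t,s;\lambda,k,\mu)$ of Proposition \ref{Proposition representations y0 and y1}, the fact that $y_1$ solves the adjoint problem in $s$ with final data $(0,-1)$, the resulting Duhamel-type identity \eqref{fundametal integral equality} obtained by testing with $\psi=y_1(t,s;\lambda,k,\mu)\varphi_\lambda(x)$ for \emph{every} $\lambda$, and then the integration in $\lambda$ against $\lambda^q\mathrm{e}^{-\lambda(A_k(t)+R)}$ producing the auxiliary functions $\xi_q,\eta_q$ in \eqref{def xi q}--\eqref{def eta q}. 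It is the tuned choice $q=(n-1)/2-1/p$ in the functional $\mathcal{U}$ of \eqref{def functional mathcalU} that converts the cone-volume/weight balance into the critical frame \eqref{iteration frame}, carrying the $\big(\log\langle A_k(s)\rangle\big)^{-(p-1)}$ weight, together with the logarithmic seed \eqref{first logarithmic lower bound mathcalU}; your functional $y(t)=\int_1^t\int|u|^p b\,\mathrm{d}x\,\mathrm{d}s$ built from one fixed exponential weight does not close into any inequality whose iterates gain growing powers of $\log t$, so the ``nonlinear ODE for $y$'' step would fail at the critical exponent.

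A second concrete gap concerns the hypothesis $\mu\leqslant k$ or $\mu\geqslant 2-k$. Your identification with $\nu\geqslant 1/2$ is arithmetically correct, but the mechanism in the paper is not monotonicity of $\mathrm{K}_\nu$: it is a comparison (minimum) principle showing $y_1(t,s;\lambda,k,\mu)\geqslant s^{\frac{\mu+k}{2}}t^{\frac{k-\mu}{2}}\sinh\big(\lambda(\phi_k(t)-\phi_k(s))\big)/\lambda$, which works precisely because the constant $\tfrac{k-\mu}{2}\big(\tfrac{k+\mu}{2}-1\big)$ is nonpositive for $\mu\notin(k,2-k)$, while the companion bound \eqref{lower bound estimate y0(t,s;lambda,k)} for $y_0$ is available only for $\mu\geqslant 2-k$. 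Hence the range $\mu\in[0,k]$ is \emph{not} reachable directly: the paper handles it through the conjugation $v=t^{\mu-1}u$, which transforms \eqref{Semi EdeS k damped} into \eqref{Semi EdeS k damped transf v} with damping coefficient $2-\mu\in[2-k,2]$ and a compensating factor $s^{(1-\mu)(p-1)}$ in the nonlinearity; your proposal omits this reduction, leaving half of the stated $\mu$-range unproved. The final slicing and constant bookkeeping you describe have the right shape (in the paper they are imported verbatim from \cite{PTY20}), but they can only start once the frame \eqref{iteration frame} and the logarithmic seed are established, and neither is produced by the single-test-function scheme you set up.
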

 
\begin{remark} It seems that the assumption in Theorem \ref{Theorem critical case p0} for the multiplicative constant $ \mu\leqslant k$ or $\mu\geqslant 2-k$ is technical, since it is due to the method we are going to apply for the proof.
\end{remark}

\begin{theorem}[Critical case: part II]   \label{Theorem critical case p1}
Let $\mu\geqslant \mu_0(k,n)$ and $p=p_1(k,n)$. Let us assume that $u_0\in H^1(\mathbb{R}^n)$ and $u_1\in L^2(\mathbb{R}^n)$ are nonnegative, nontrivial and compactly supported functions with supports contained in $B_R$ for some $R>0$. Let $$u\in \mathcal{C} \big([1,T), H^1(\mathbb{R}^n)\big) \cap \mathcal{C}^1 \big([1,T), L^2(\mathbb{R}^n)\big)\cap L^p_{\mathrm{loc}}\big([1,T)\times \mathbb{R}^n\big)$$ be an energy solution to \eqref{Semi EdeS k damped} according to Definition \ref{Def energy sol}  with lifespan $T=T(\varepsilon)$ and satisfying the support condition $\mathrm{supp} \, u(t,\cdot)\subset B_{A_k(t)+R}$ for any $t\in (1,T)$.

 Then, there exists a positive constant $\varepsilon_0~=~\varepsilon_0(u_0,u_1,n,p,k,\mu,R)$ such that for any $\varepsilon\in (0,\varepsilon_0]$ the energy solution $u$ blows up in finite time. Moreover, the upper bound estimate for the lifespan
\begin{align*}
T(\varepsilon)\leqslant \exp\left(C\varepsilon^{-(p-1)}\right)
\end{align*} holds, where the positive constant $C$ is independent of $\varepsilon$.
\end{theorem}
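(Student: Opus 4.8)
The plan is to work with the space average $U(t)\doteq\int_{\mathbb{R}^n}u(t,x)\,\mathrm{d}x$ as the single functional and to run a slicing iteration on it. First I would take the test function in the weak formulation \eqref{integral identity def energy sol} to be (a smooth cut-off of) the constant function $1$; thanks to the finite speed of propagation encoded in the support condition $\mathrm{supp}\,u(t,\cdot)\subset B_{A_k(t)+R}$ (Remark \ref{Remark support}), the Laplacian term drops out and, after differentiating in $t$, one obtains the ordinary differential identity $U''(t)+\mu t^{-1}U'(t)=\int_{\mathbb{R}^n}|u(t,x)|^p\,\mathrm{d}x$. Applying Jensen's (Hölder's) inequality on the ball $B_{A_k(t)+R}$, whose measure is $\lesssim t^{(1-k)n}$ by \eqref{def A k}, together with the identity $(1-k)n(p-1)=2$ which holds precisely because $p=p_1(k,n)$, yields the closed differential inequality
\begin{align*}
U''(t)+\mu\,t^{-1}U'(t)\geqslant C\,t^{-2}U(t)^p .
\end{align*}

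Next I would record the elementary consequences of the sign assumptions on the data. Multiplying by $t^\mu$ gives $(t^\mu U')'\geqslant 0$, hence $t^\mu U'(t)\geqslant U'(1)=\varepsilon\int_{\mathbb{R}^n} u_1\,\mathrm{d}x>0$, so that $U'\geqslant 0$ and $U>0$. Integrating $U'(t)\geqslant \varepsilon\big(\int u_1\big)\,t^{-\mu}$, and crucially using that $\mu\geqslant\mu_0(k,n)>1$ (so that $\int_1^t s^{-\mu}\,\mathrm{d}s$ stays bounded below by a positive constant for $t\geqslant 2$), provides the first lower bound $U(t)\geqslant c_0\varepsilon$ for $t\geqslant 2$.

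The heart of the proof is the iteration, which I would phrase through the logarithmic time $\tau=\log t$. Setting $y(\tau)=U(e^\tau)$ turns the differential inequality into the constant-coefficient, overdamped form $y''+(\mu-1)y'\geqslant C\,y^p$ with damping $\mu-1>0$; integrating twice against the weight $e^{(\mu-1)\tau}$ reduces matters, after restricting the inner integral to a short interval $[\sigma-2^{-j},\sigma]$, to an inequality of the schematic shape $y(\tau)\gtrsim 2^{-j}\int_{L_{j+1}}^{\tau}y(\sigma-2^{-j})^p\,\mathrm{d}\sigma$. This is exactly where the slicing enters: the shift $\sigma\mapsto\sigma-2^{-j}$ is absorbed by an increasing sequence of starting points $L_j\uparrow L_\infty<\infty$ (equivalently a geometric sequence of times in the original variable), while the lost factors $2^{-j}$ are tracked. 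Feeding the ansatz $y(\tau)\geqslant D_j(\tau-L_j)^{\sigma_j}$ into this inequality produces the recursions $\sigma_{j+1}=p\sigma_j+1$ and $D_{j+1}\geqslant \tilde C\,2^{-j}\sigma_{j+1}^{-1}D_j^{\,p}$, started from $\sigma_0=0$ and $D_0=c_0\varepsilon$.

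Finally I would close the argument by the usual logarithmic analysis of these recursions. Solving $\sigma_j=(p^j-1)/(p-1)$ and showing that $p^{-j}\log D_j$ converges (the series generated by the terms $-j\log 2-\log\sigma_{j+1}$ is summable since $p>1$) gives $\log D_j\geqslant p^j\big(\log(c_0\varepsilon)-S\big)$ for a finite constant $S$. Substituting into $\log y(\tau)\geqslant \log D_j+\sigma_j\log(\tau-L_\infty)$ shows that the right-hand side diverges as $j\to\infty$, contradicting the finiteness of $U(t)$, as soon as the bracket $\log(c_0\varepsilon)-S+(p-1)^{-1}\log(\tau-L_\infty)$ is positive, i.e. as soon as $\tau-L_\infty\gtrsim\varepsilon^{-(p-1)}$. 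Translating back via $t=e^\tau$ yields exactly $T(\varepsilon)\leqslant\exp\big(C\varepsilon^{-(p-1)}\big)$. I expect the main obstacle to be the rigorous bookkeeping of the slicing step: choosing the shift sequence so that $L_j$ converges while the constants $2^{-j}$ lost at each stage remain summable after taking logarithms and dividing by $p^j$, so that the blow-up threshold reproduces the sharp power $\varepsilon^{-(p-1)}$ rather than a worse one.
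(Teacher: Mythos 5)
Your proposal is correct and follows essentially the same route as the paper: the space average $U_0$, the iteration frame obtained from the constant test function plus Jensen's inequality with $(1-k)n(p-1)=2$, and a slicing iteration with exponents $\sigma_{j+1}=p\sigma_j+1$ leading to the threshold $\exp\left(C\varepsilon^{-(p-1)}\right)$. Your substitution $\tau=\log t$ and the sequence $L_j\uparrow L_\infty<\infty$ are just the paper's slicing $\ell_j=2-2^{-(j+1)}\uparrow 2$ in logarithmic coordinates (and your appeal to $\mu>1$ for the first bound $U_0\gtrsim\varepsilon$ is unnecessary: monotonicity of $U_0$ and $U_0(1)=\varepsilon\int u_0>0$ already suffice, as in the paper), so the two arguments coincide up to cosmetic reparametrization.
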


The remaining part of the paper is organized as follows: the proof of the result in the subcritical case (cf. Theorem \ref{Theorem subcritical case}) is carried out in Section \ref{Section subcritical case}; in Section \ref{Section critical case p0} we prove Theorem \ref{Theorem critical case p0} by generalizing the approach introduced in \cite{WakYor18}; finally, we show the proof of Theorem \ref{Theorem critical case p1} in Section \ref{Section critical case p1} via a standard slicing procedure.

\section{Subcritical case} \label{Section subcritical case}

In this section we are going to prove Theorem \ref{Theorem subcritical case}. Let $u$ be a local in time solution to \eqref{Semi EdeS k damped} and let us assume that the assumptions from the statement of  Theorem \ref{Theorem subcritical case} on $p$ and on the data are fulfilled. We will follow the multiplier approach introduced by \cite{LTW17} and then improved by \cite{TuLin17}, to derive a suitable iteration frame for the time -- dependent functional 
\begin{align}\label{def U}
U_0(t) \doteq \int_{\mathbb{R}^n} u(t,x) \, \mathrm{d}x.
\end{align} In order to obtain a first lower bound estimate for $U_0$ we will introduce a second time -- dependent functional,  following the main ideas of the pioneering paper \cite{YZ06} and adapting them to the case with time -- depend coefficients as in \cite{HeWittYin17,GalYag17EdS,TuLin17,PT18}.

The section is organized as follows: in Section \ref{Subsection sol adj eq subcrit} we determine a suitable positive solution to the adjoint homogeneous linear equation with separate variables, then, we use this function to derive a lower bound estimate for $U_0$ in Section \ref{Subsection 1st lb estimate subcrit}; in Sections \ref{Subsection iter frame subcrit} and \ref{Subsection iter ar subcrit} the derivation of the iteration frame and its application in an iterative argument are dealt with, respectively.

\subsection{Solution of the adjoint homogeneous linear equation} \label{Subsection sol adj eq subcrit}

In this section, we shall determine a particular positive solution  to the adjoint homogeneous linear equation
\begin{align}\label{adj eq hom lin subcrit}
\Psi_{ss}-s^{-2k} \Delta \Psi-\mu \, s^{-1}\Psi_s+\mu \, s^{-2} \Psi=0.
\end{align}
First of all, we recall the remarkable function
\begin{align}\label{def Yordanov-Zhang function}
\varphi (x) \doteq \begin{cases} \int_{\mathbb{S}^{n-1}} \mathrm{e}^{x\cdot \omega} \mathrm{d} \sigma_\omega & \mbox{if} \ n\geqslant 2, \\ \cosh x 
& \mbox{if} \ n=1, \end{cases} 
\end{align} introduced in \cite{YZ06} for the study of the critical semilinear wave equation. The main properties of this function that will used throughout this paper are the following: $\varphi$ is a positive and smooth function that satisfies $\Delta \varphi =\varphi$ and asymptotically behaves like $c_n |x|^{-\frac{n-1}{2}}\mathrm{e}^{|x|}$ as $|x|\to \infty$.

If we look for a solution to \eqref{adj eq hom lin subcrit} with separate variables, that is, we consider the ansatz $\Psi(s,x)=\varrho(s) \varphi(x)$, then, it suffices to find a positive solution to the ODE
\begin{align}\label{ODE rho}
\varrho''-s^{-2k}\varrho-\mu s^{-1} \varrho' +\mu s^{-2} \varrho=0.
\end{align} We perform the change of variable $\tau = \phi_k(s)$. By using
\begin{align*}
\varrho' & =t^{-k} \frac{\mathrm{d}\varrho}{\mathrm{d}\tau}, \qquad  \varrho'' =t^{-2k} \frac{\mathrm{d}^2\varrho}{\mathrm{d}\tau^2} -kt^{-1-k} \frac{\mathrm{d}\varrho}{\mathrm{d}\tau}, 
\end{align*} it follows with straightforward computations that $\varrho$ solves \eqref{ODE rho} if and only if 
\begin{align}\label{ODE rho wrt tau}
\frac{\mathrm{d}^2\varrho}{\mathrm{d}\tau^2}-\frac{k+\mu}{1-k} \, \frac{1}{\tau} \, \frac{\mathrm{d}\varrho}{\mathrm{d}\tau}+\left(\frac{\mu}{(1-k)^2}\, \frac{1}{\tau^2} -1\right) \varrho=0.
\end{align} To further simplify the previous equation, we carry out the transformation $\varrho(\tau)=\tau^\sigma \zeta(\tau)$, where $\sigma\doteq \frac{1+\mu}{2(1-k)}$. Hence, using 
\begin{align*}
\frac{\mathrm{d}\varrho}{\mathrm{d}\tau}(\tau) & =\sigma \tau^{\sigma-1} \zeta(\tau) + \tau^\sigma\frac{\mathrm{d}\zeta}{\mathrm{d}\tau} (\tau), \qquad \frac{\mathrm{d}^2\varrho}{\mathrm{d}\tau^2} = \sigma (\sigma-1) \tau^{\sigma-2} \zeta(\tau)+2\sigma \tau^{\sigma-1} \frac{\mathrm{d}\zeta}{\mathrm{d}\tau} (\tau) + \tau^\sigma\frac{\mathrm{d}^2\zeta}{\mathrm{d}\tau^2} (\tau),
\end{align*} we get that $\varrho$ is a solution to \eqref{ODE rho wrt tau} if and only if $\zeta$ solves
\begin{align}\label{ODE zeta wrt tau}
\tau^2\frac{\mathrm{d}^2\zeta}{\mathrm{d}\tau^2}-\left(2\sigma-\frac{k+\mu}{1-k} \right) \tau \frac{\mathrm{d}\zeta}{\mathrm{d}\tau}+\left[ \sigma\left(\sigma-1-\frac{k+\mu}{1-k}\right)+\frac{\mu}{(1-k)^2}-\tau^2\right] \zeta=0.
\end{align} Due to the choice of the parameter $\sigma$, equation \eqref{ODE zeta wrt tau} is nothing but a modified Bessel equation of order $\gamma\doteq \frac{\mu-1}{2(1-k)}$, that is,  \eqref{ODE zeta wrt tau} can be rewritten as
\begin{align*}
\tau^2\frac{\mathrm{d}^2\zeta}{\mathrm{d}\tau^2}- \tau \frac{\mathrm{d}\zeta}{\mathrm{d}\tau}
-(\gamma^2+\tau^2)\zeta=0.
\end{align*} If we pick the modified Bessel function of the second kind $\mathrm{K}_\gamma$ as solution to the previous equation, then, up to a negligible multiplicative constant, we found
\begin{align} \label{def rho}
\rho(s)\doteq s^{\frac{1+\mu}{2}}\mathrm{K}_\gamma\big(\phi_k(s)\big)
\end{align} as a positive solution to \eqref{ODE rho} and, in turn, \begin{align} \label{def Psi}
\Psi(s,x)\doteq \rho(s) \varphi(x) = s^{\frac{1+\mu}{2}}\mathrm{K}_\gamma\big(\phi_k(s)\big) \varphi(x)
\end{align} as a positive solution of the adjoint equation \eqref{adj eq hom lin subcrit}.

In the next sections, we will need to employ the asymptotic behavior of the function $\varrho=\varrho(t)$ for $t\to \infty$. Since $\mathrm{K}_\gamma(z) = \sqrt{\pi/(2z)} \left(\mathrm{e}^{-z}+O(z^{-1})\right)$ as $z\to \infty$ (cf. \cite{NIST10}), then, the following asymptotic estimate holds
\begin{align} \label{asymptotic rho}
\varrho(t) =  \sqrt{\frac{\pi}{2}} \  t^{\frac{k+\mu}{2}}\, \mathrm{e}^{-\phi_k(t)} \left(1+O(t^{-1+k})\right) \qquad \mbox{for} \ t\to \infty.
\end{align}

The solution $\Psi$ of the adjoint equation \eqref{adj eq hom lin subcrit} that we determined in this section will be employed in Section \ref{Subsection 1st lb estimate subcrit} to introduce a second time -- dependent functional with the purpose to establish a first lower bound estimate for $U_0$.

\subsection{Derivation of the iteration frame} \label{Subsection iter frame subcrit}

In this section we are going to determine the iteration frame for the functional $U_0=U_0(t)$ defined in \eqref{def U}. Let us choose as test function $\psi=\psi(s,x)$ in the integral relation \eqref{integral identity def energy sol} such that $\psi=1$ on the forward cone $\{(s,x)\in [1,t]\times \mathbb{R}^n: |x|\leqslant R+ A_k(s)\}$. Then,
\begin{align*}
\int_{\mathbb{R}^n}  \partial_t u(t,x) \,  \mathrm{d}x  -\varepsilon \int_{\mathbb{R}^n} u_1(x)   \,  \mathrm{d}x + \int_1^t\int_{\mathbb{R}^n} \mu s^{-1} \partial_t u(s,x)\, \mathrm{d}x \, \mathrm{d}s   = \int_1^t\int_{\mathbb{R}^n}  |u(s,x)|^p \, \mathrm{d} x \, \mathrm{d} s 
\end{align*} which can be rewritten as
\begin{align*}
U'_0(t)-U'_0(1) + \int_1^t \mu s^{-1}  U'_0(s) \, \mathrm{d}s   = \int_1^t\int_{\mathbb{R}^n}  |u(s,x)|^p \, \mathrm{d} x \, \mathrm{d} s.
\end{align*} Differentiating the last identity with respect to $t$, we get
\begin{align*}
U''_0(t)+\mu t^{-1} U'_0(t) = \int_{\mathbb{R}^n}  |u(t,x)|^p \, \mathrm{d} x.
\end{align*} Multiplying the previous equation by $t^{\mu}$, it follows
\begin{align*}
t^\mu U''_0(t)+\mu t^{\mu-1} U'_0(t) = \frac{\mathrm{d}}{\mathrm{d}t}\big( t^\mu U'_0(t)\big) =t^\mu \int_{\mathbb{R}^n}  |u(t,x)|^p \, \mathrm{d} x.
\end{align*} Integrating twice this relation over $[1,t]$, we find
\begin{align}\label{iter fram subcrit for 1st lb data}
U_0(t)& = U_0(1) +U'_0(1) \int_1^t \tau^{-\mu}\,  \mathrm{d}\tau + \int_1^t \tau^{-\mu}\int_1^\tau s^\mu  \int_{\mathbb{R}^n}  |u(s,x)|^p \, \mathrm{d} x\,  \mathrm{d}s \,  \mathrm{d}\tau. 
\end{align} On the one hand , from \eqref{iter fram subcrit for 1st lb data} we derive the lower bound estimate
\begin{align} \label{lb estimate U0 trivial}
U_0(t) \gtrsim \varepsilon,
\end{align}  where the unexpressed positive multiplicative constant depends on $u_0,u_1$ due to the nonnegativeness of $u_0,u_1$ and $U^{(j)}(1)=\varepsilon \int_{\mathbb{R}^n} u_j(x) \mathrm{d}x$ for $j\in\{0,1\}$. On the other hand, we obtain the estimate
\begin{align}\label{iter fram subcrit for 1st lb}
U_0(t)& \geqslant \int_1^t \tau^{-\mu}\int_1^\tau s^\mu  \int_{\mathbb{R}^n}  |u(s,x)|^p \, \mathrm{d} x\,  \mathrm{d}s \,  \mathrm{d}\tau \\
& \gtrsim \int_1^t \tau^{-\mu}\int_1^\tau s^\mu  (R+A_k(s))^{-n(p-1)}  (U_0(s))^p \,  \mathrm{d}s \,  \mathrm{d}\tau , \notag
\end{align} where in the second step we applied Jensen's inequality and the support property for $u(s,\cdot)$. Therefore, we proved the following iteration frame for $U_0$
\begin{align}
U_0(t)& \geqslant  C \int_1^t \tau^{-\mu}\int_1^\tau s^{\mu -(1-k)n(p-1)}  (U_0(s))^p\,  \mathrm{d}s \,  \mathrm{d}\tau  \label{iter fram subcrit U0}
\end{align} for a suitable positive constant $C=C(n,p,k)$ and for $t\geqslant 1$. In Section \ref{Subsection iter frame subcrit} we will employ \eqref{iter fram subcrit U0} to derive iteratively a sequence of lower bound estimates for $U_0$. However, we shall first derive in Section \ref{Subsection 1st lb estimate subcrit} another lower bound estimate for $U_0$ that will provide, together with \eqref{lb estimate U0 trivial}, the starting point for the iteration procedure. 

\subsection{First lower bound estimate for the functional} \label{Subsection 1st lb estimate subcrit}

Let $\Psi=\Psi(t,x)$ be the function defined by \eqref{def Psi}. Since this function is smooth and positive, by applying the integral relation \eqref{integral identity  weak sol} to $\Psi$ and using the fact that $\Psi$ solves the adjoint equation \eqref{adj eq hom lin subcrit}, we get
\begin{align*}
 0 & \leqslant  \int_1^t\int_{\mathbb{R}^n}  |u(s,x)|^p \Psi (s,x) \, \mathrm{d} x \, \mathrm{d} s \\ & = \int_{\mathbb{R}^n}  \partial_t u(t,x) \Psi (t,x) \,  \mathrm{d}x - \int_{\mathbb{R}^n} u(t,x)\Psi_s(t,x) \, \mathrm{d}x  + \int_{\mathbb{R}^n} \mu\, t^{-1} u(t,x) \Psi(t,x) \,\mathrm{d}x  \\
& \quad -\varepsilon \int_{\mathbb{R}^n} \left( \varrho(1) u_1(x) +(\mu\varrho(1)-\varrho'(1))u_0(x)\right) \varphi (x) \,  \mathrm{d}x .
\end{align*} If we introduce the auxiliary functional
\begin{align}\label{def U1}
U_1(t)\doteq \int_{\mathbb{R}^n} u(t,x) \Psi(t,x) \, \mathrm{d} x,
\end{align} then, from the last estimate we have
\begin{align}\label{1st ineq U1}
U_1'(t)-\frac{2\varrho'(t)}{\varrho(t)}\, U_1(t)+\mu \, t^{-1} U_1(t) \geqslant \varepsilon \int_{\mathbb{R}^n} \big( \varrho(1) u_1(x) +(\mu\varrho(1)-\varrho'(1))u_0(x)\big) \varphi (x) \,  \mathrm{d}x,
\end{align} where we applied the relation
\begin{align*}
U_1'(t)= \int_{\mathbb{R}^n}  \partial_t u(t,x) \Psi (t,x) \,  \mathrm{d}x +\int_{\mathbb{R}^n} u(t,x)\psi_s(t,x) \, \mathrm{d}x = \int_{\mathbb{R}^n}  \partial_t u(t,x) \Psi (t,x) \,  \mathrm{d}x +\frac{\varrho'(t)}{\varrho(t)}U_1(t).
\end{align*}
Let compute more explicitly the term on the right -- hand side of \eqref{1st ineq U1} and show its positiveness. By using the recursive identity $$\mathrm{K}'_\gamma(z)=-\mathrm{K}_{\gamma+1}(z)+\frac{\gamma}{z} \,\mathrm{K}_\gamma(z)$$ for the derivative of the modified Bessel function of the second kind and $\gamma=\frac{\mu-1}{2(1-k)}$, it follows 
\begin{align*}
\varrho'(t) & = \tfrac{1+\mu}{2} \,t^{\frac{\mu-1}{2}} \mathrm{K}_\gamma\big(\phi_k(t)\big)+t^{\frac{1+\mu}{2}-k} \mathrm{K}'_\gamma\big(\phi_k(t)\big) \\ &=  \tfrac{1+\mu}{2}\, t^{\frac{\mu-1}{2}} \mathrm{K}_\gamma\big(\phi_k(t)\big)+t^{\frac{1+\mu}{2}-k} \Big(-\mathrm{K}_{\gamma+1}\big(\phi_k(t)\big) +\tfrac{\mu-1}{2} \, t^{-1+k}\mathrm{K}_\gamma\big(\phi_k(t)\big) \Big) \\
 &=  \mu  \, t^{\frac{\mu-1}{2}} \mathrm{K}_\gamma\big(\phi_k(t)\big)- t^{\frac{1+\mu}{2}-k} \mathrm{K}_{\gamma+1}\big(\phi_k(t)\big) .
\end{align*}  In particular, the following relations hold
\begin{align*}
\mu\varrho(1)-\varrho'(1) = \mathrm{K}_{\gamma+1}\big(\phi_k(1)\big)>0, \qquad \varrho(1)=\mathrm{K}_{\gamma}\big(\phi_k(1)\big)>0,
\end{align*} so that we may rewrite \eqref{1st ineq U1} as
\begin{align}
U_1'(t)-\frac{2\varrho'(t)}{\varrho(t)}\, U_1(t)+\mu \, t^{-1} U_1(t) & \geqslant \varepsilon \underbrace{\int_{\mathbb{R}^n} \big( \mathrm{K}_{\gamma}\big(\phi_k(1)\big)  u_1(x) +\mathrm{K}_{\gamma+1}\big(\phi_k(1)\big) u_0(x)\big) \varphi (x) \,  \mathrm{d}x }_{ \doteq I_{k,\mu}[u_0,u_1]} . \label{2nd ineq U1}
\end{align} Multiplying \eqref{2nd ineq U1} by $t^\mu/\varrho^2(t)$, we have
\begin{align*}
\frac{\mathrm{d}}{\mathrm{d}t} \left( \frac{t^\mu}{\varrho^2(t)} \,U_1(t)\right) = \frac{t^\mu}{\varrho^2(t)} U_1'(t)-\frac{2\varrho'(t)}{\varrho^3(t)}\, t^\mu U_1(t)+\mu \, t^{\mu-1} \frac{1}{\varrho^2(t)}U_1(t) \geqslant \varepsilon I_{k,\mu}[u_0,u_1]  \frac{t^\mu}{\varrho^2(t)}.
\end{align*} Integrating the previous inequality over $[1,t]$ and using the sign assumption on $u_0$, we get 
\begin{align*}
U_1(t)&\geqslant \frac{\varrho^2(t) t^{-\mu}}{\varrho^2(1)} U_1(1)+ \varepsilon I_{k,\mu}[u_0,u_1] \, \frac{\varrho^2(t)}{t^\mu}  \int_1^t  \frac{s^\mu}{\varrho^2(s)} \, \mathrm{d}s \\ &\geqslant \varepsilon I_{k,\mu}[u_0,u_1] \, \frac{\varrho^2(t)}{t^\mu} \int_1^t  \frac{s^\mu}{\varrho^2(s)} \, \mathrm{d}s.
\end{align*} Thanks to \eqref{asymptotic rho}, there exists $T_0=T_0(k,\mu)>1$ such that
\begin{align*}
U_1(t) &\gtrsim \varepsilon I_{k,\mu}[u_0,u_1] \, t^{k} \mathrm{e}^{-2\phi_k(t)} \int_{T_0}^t s^{-k} \mathrm{e}^{2\phi_k(s)} \, \mathrm{d}s
\end{align*}  for $t\geqslant T_0$. Consequently, for for $t\geqslant 2T_0$, shrinking the domain of integration in the last inequality, we have
\begin{align}
U_1(t) &\gtrsim \varepsilon I_{k,\mu}[u_0,u_1] \, t^{k} \mathrm{e}^{-2\phi_k(t)} \int_{t/2}^t s^{-k} \mathrm{e}^{2\phi_k(s)} \, \mathrm{d}s   
=2^{-1} \varepsilon I_{k,\mu}[u_0,u_1] \, t^{k} \mathrm{e}^{-2\phi_k(t)} \left( \mathrm{e}^{2\phi_k(t)}-\mathrm{e}^{2\phi_k(\frac t2)} \right)  \notag  \\
& = 2^{-1} \varepsilon I_{k,\mu}[u_0,u_1] \, t^{k}  \left( 1-\mathrm{e}^{2\phi_k(\frac t2)-2\phi_k(t)} \right) = 2^{-1} \varepsilon I_{k,\mu}[u_0,u_1] \, t^{k}  \left( 1-\mathrm{e}^{-\frac{2}{1-k}(1-2^{k-1})t^{1-k}} \right) \notag \\ & \geqslant 2^{-1} \varepsilon I_{k,\mu}[u_0,u_1] \, t^{k}  \left( 1-\mathrm{e}^{-\frac{2}{1-k}(2^{1-k}-1)T_0^{1-k}} \right) \gtrsim \varepsilon t^k. \label{lb estimate U1}
\end{align}

By repeating exactly the same computations as in \cite[Section 3]{PalRei18} (which are completely independent of the amplitude function $A_k$), we obtain 
\begin{align*}
\int_{B_{R+A_k(t)}} (\Psi(t,x) )^{p'} \mathrm{d}x = (\varrho(t))^{p'} \int_{B_{R+A_k(t)}} (\varphi(x) )^{p'} \mathrm{d}x \lesssim  (\varrho(t))^{p'} \mathrm{e}^{p'(R+A_k(t))}(R+A_k(t))^{n-1-\frac{n-1}{2}p'}.
\end{align*} Therefore, by using \eqref{asymptotic rho}, for $t\geqslant T_0$ we get
\begin{align}
\int_{B_{R+A_k(t)}} (\Psi(t,x) )^{p'} \mathrm{d}x & \lesssim  \mathrm{e}^{p'(R-\phi_k(1))} t^{\frac{k+\mu}{2}p'} (R+A_k(t))^{n-1-\frac{n-1}{2}p'} \notag \\
& \lesssim  t^{(1-k)(n-1)+\left[\frac{k+\mu}{2}-(1-k)\frac{n-1}{2}\right]p'}. \label{ub estimate Psi^p'}
\end{align} Then, combining H\"older's inequality, \eqref{lb estimate U1} and \eqref{ub estimate Psi^p'}, it follows
\begin{align}
\int_{\mathbb{R}^n} |u(t,x) |^p \mathrm{d}x & \geqslant (U_1(t))^p \left(\int_{B_{R+A_k(t)}} (\Psi(t,x) )^{p'} \mathrm{d}x \right)^{-(p-1)} \notag \\ & \gtrsim \varepsilon^p t^{kp-(1-k)(n-1)(p-1)+\left[(1-k)\frac{n-1}{2}-\frac{k+\mu}{2}\right]p}
\notag \\ & \gtrsim \varepsilon^p t^{(1-k)(n-1)+\frac{k}{2}p-\left((1-k)\frac{n-1}{2}+\frac{\mu}{2}\right)p} \label{lb estimate int |u|^p}
\end{align} for $t\geqslant T_1\doteq 2 T_0$. Finally, plugging \eqref{lb estimate int |u|^p} in \eqref{iter fram subcrit for 1st lb data}, for $t\geqslant T_1$ it holds
\begin{align*}
U_0(t) & \geqslant \int_{T_1}^t \tau^{-\mu}\int_{T_1}^\tau s^\mu  \int_{\mathbb{R}^n}  |u(s,x)|^p \, \mathrm{d} x\,  \mathrm{d}s \,  \mathrm{d}\tau  \gtrsim \varepsilon^p \int_{T_1}^t \tau^{-\mu}\int_{T_1}^\tau    s^{\mu+(1-k)(n-1)+\frac{k}{2}p-\left((1-k)\frac{n-1}{2}+\frac{\mu}{2}\right)p}  \,  \mathrm{d}s \,  \mathrm{d}\tau \\
& \gtrsim \varepsilon^p t^{-\left((1-k)\frac{n-1}{2}+\frac{\mu}{2}\right)p -\mu}  \int_{T_1}^t \int_{T_1}^\tau    (s-T_1)^{\mu+(1-k)(n-1)+\frac{k}{2}p}  \,  \mathrm{d}s \,  \mathrm{d}\tau \\
& \gtrsim \varepsilon^p  t^{-\left((1-k)\frac{n-1}{2}+\frac{\mu}{2}\right)p -\mu}      (t-T_1)^{\mu+(1-k)(n-1)+\frac{k}{2}p+2}.
\end{align*} Summarizing we proved the lower bound estimate for the functional  $U_0$
\begin{align} \label{lb estimate U0 subcrit}
U_0(t) & \geqslant K  \varepsilon^p t^{-a_0}      (t-T_1)^{b_0}
\end{align} for $t\geqslant T_1$, where $K=K(n,k,\mu,p,R,u_0,u_1)$ is a suitable positive constant and 
\begin{align}\label{def a0 b0 subcrit}
a_0\doteq \left((1-k)\tfrac{n-1}{2}+\tfrac{\mu}{2}\right)p +\mu, \qquad  b_0\doteq \mu+(1-k)(n-1)+\tfrac{k}{2}p+2.
\end{align}

\subsection{Iteration argument} \label{Subsection iter ar subcrit}

In this section we will use the iteration frame \eqref{iter fram subcrit U0} to prove that $U_0$ blows up in finite time under the assumptions of Theorem \ref{Theorem subcritical case}. More precisely, we are going to prove the sequence of lower bound estimates 
\begin{align} \label{lb estimates U0 step j subcrit}
U_0(t)  & \geqslant D_j  t^{-a_j}      (t-T_1)^{b_j}
\end{align} for $t\geqslant T_1$, where $\{D_j\}_{j\in\mathbb{N}}$, $\{a_j\}_{j\in\mathbb{N}}$ and $\{b_j\}_{j\in\mathbb{N}}$ are sequences of nonnegative real numbers that will be determined iteratively during the proof.

Clearly, for $j=0$ the estimate in  \eqref{lb estimates U0 step j subcrit} is nothing but \eqref{lb estimate U0 subcrit} with $D_0=K\varepsilon^p$ and $a_0,b_0$ defined by \eqref{def a0 b0 subcrit}. In order to prove \eqref{lb estimates U0 step j subcrit} via an inductive argument, it remains just to prove the inductive step. Let us assume the validity of \eqref{lb estimates U0 step j subcrit} for $j$. We prove now its validity for $j+1$ too.

Plugging \eqref{lb estimates U0 step j subcrit} into \eqref{iter fram subcrit U0}, for $t>T_1$ we get
\begin{align*}
U_0(t)& \geqslant  C \int_{T_1}^t \tau^{-\mu}\int_{T_1}^\tau s^{\mu -(1-k)n(p-1)}  (U_0(s))^p\,  \mathrm{d}s \,  \mathrm{d}\tau  \\
&\geqslant C D_j^p\int_{T_1}^t \tau^{-\mu}\int_{T_1}^\tau s^{\mu -(1-k)n(p-1)-a_jp}      (s-T_1)^{b_jp}\,  \mathrm{d}s \,  \mathrm{d}\tau   \\
&\geqslant C D_j^p t^{ -(1-k)n(p-1) -\mu -a_jp}   \int_{T_1}^t \int_{T_1}^\tau       (s-T_1)^{\mu+b_jp}\,  \mathrm{d}s \,  \mathrm{d}\tau  \\
& = \frac{C D_j^p}{ (1+\mu+b_j p) (2+\mu+b_j p)} \, t^{ -(1-k)n(p-1) -\mu -a_jp}     (t-T_1)^{2+\mu+b_jp},
\end{align*} which is exactly \eqref{lb estimates U0 step j subcrit} for $j+1$ provided that
\begin{align} \label{cond Dj+1}
D_{j+1} & \doteq \frac{C D_j^p}{ (1+\mu+b_j p) (2+\mu+b_j p)} ,\\
 a_{j+1} & \doteq  \underbrace{(1-k)n(p-1) + \mu}_{\doteq \alpha} + p a_j, \quad  b_{j+1}\doteq  \underbrace{2+\mu}_{\doteq \beta}+p b_j. \label{def aj+1 and bj+1}
\end{align}
Employing recursively \eqref{def aj+1 and bj+1}, we may express explicitly $a_j$ and $b_j$ as follows
\begin{align}
a_j&= \alpha+pa_{j-1} = \cdots = \alpha \sum_{k=0}^{j-1} p^k + a_0p^j =\left(\tfrac{\alpha}{p-1}+a_0\right)p^j -\tfrac{\alpha}{p-1},\label{a_j}\\
b_j&=\beta+pb_{j-1} = \cdots = \beta \sum_{k=0}^{j-1} p^k + b_0p^j =\left(\tfrac{\beta}{p-1}+b_0\right)p^j -\tfrac{\beta}{p-1}.\label{b_j}
\end{align}
Combining \eqref{def aj+1 and bj+1} and \eqref{b_j}, we find
$$b_{j}=2+\mu+pb_{j-1}<\left(\tfrac{\beta}{p-1}+b_0\right) p^j,$$ that implies, in turn,
\begin{align*}
D_{j}\geqslant \frac{CD^p_{j-1}}{(2+\mu+pb_{j-1})^2} = \frac{CD^p_{j-1}}{b_j^2} \geqslant \underbrace{\frac{C}{\left(\tfrac{\beta}{p-1}+b_0\right)^2}}_{\doteq \widetilde{C}}D^p_{j-1} p^{-2j} = \widetilde{C} D^p_{j-1} p^{-2j}.
\end{align*}
Applying the logarithmic function to both sides of the last inequality and using the resulting inequality iteratively, we get
\begin{align*}
\log D_j & \geqslant p\log D_{j-1}-2j \log p+\log \widetilde{C}\\
& \geqslant p^2\log D_{j-2}-2(j+(j-1)p)\log p+(1+p)\log \widetilde{C} \\
& \geqslant \cdots \geqslant p^{j}\log D_0-2\log p \,\sum_{k=0}^{j-1} (j-k)p^{k}+\log \widetilde{C} \, \sum_{k=0}^{j-1}p^k.
\end{align*}
Using the  formulas 
\begin{align} \label{summation identities}
\sum_{k=0}^{j-1}(j-k)p^{k}=\frac{1}{p-1}\bigg(\frac{p^{j+1}-p}{p-1}-j\bigg) \qquad \mbox{and} \qquad \sum_{k=0}^{j-1}p^k=\frac{p^j-1}{p-1},
\end{align}
that can be shown via an inductive argument, we obtain
\begin{align*}
\log D_j &  \geqslant  p^{j}\log D_0- \frac{2\log p}{p-1}\bigg(\frac{p^{j+1}-p}{p-1}-j\bigg) +(p^j-1)\frac{ \log \widetilde{C} }{p-1} \\ 
& = p^{j}\left(\log D_0 -\frac{2 p\log p}{(p-1)^2}+ \frac{ \log \widetilde{C} }{p-1}\right)+\frac{2 j\log p}{p-1}+\frac{2 p\log p}{(p-1)^2} -\frac{ \log \widetilde{C} }{p-1}.
\end{align*} Let us denote by $j_0=j_0(n,p,k,\mu)\in \mathbb{N}$ the smallest integer greater than $\frac{\log\widetilde{C}}{2\log p}-\frac{p}{p-1}$. Then, for any $j\geqslant j_0$ we have
\begin{align}
\log D_j \geqslant  p^{j}\left(\log D_0 -\frac{2 p\log p}{(p-1)^2}+ \frac{ \log \widetilde{C} }{p-1}\right) = p^{j}\log \left( K p^{-(2 p)/(p-1)^2} \widetilde{C}^{1/(p-1)} \varepsilon^p\right) = p^{j}\log \left( E_0 \varepsilon^p\right),
\label{lb Dj subcrit} 
\end{align} where $E_0\doteq K p^{-(2 p)/(p-1)^2} \widetilde{C}^{1/(p-1)}$.
Combining \eqref{lb estimates U0 step j subcrit}, \eqref{a_j}, \eqref{b_j}  and \eqref{lb Dj subcrit}, for $j\geqslant j_0$ and $t\geqslant T_1$ it holds
\begin{align*}
U_0(t) & \geqslant \exp \left(p^j \log \left( E_0 \varepsilon^p\right)\right) t^{-a_j}(t-T_1)^{b_j}  \\ &=\exp\left(p^j\left(\log \left( E_0 \varepsilon^p\right)-\left(\tfrac{\alpha}{p-1}+a_0\right)\log t+\left(\tfrac{\beta}{p-1}+b_0\right)\log (t-T_1)\right)\right) t^{\alpha/(p-1)} (t-T_1)^{-\beta/(p-1)}.
\end{align*}
For $t\geqslant 2T_1$, we have $\log(t-T_1)\geqslant \log(t/2)$, so for $j\geqslant j_0$ 
\begin{align}
U_0(t) & \geqslant \exp\left(p^j\left(\log \left( E_0 \varepsilon^p\right)+\left(\tfrac{\beta-\alpha}{p-1}+b_0-a_0\right)\log t-\left(\tfrac{\beta}{p-1}+b_0\right)\log 2\right)\right) t^{\alpha/(p-1)} (t-T_1)^{-\beta/(p-1)} \notag \\
 & = \exp\left(p^j\left(\log \left( 2^{-b_0-\beta/(p-1)}E_0 \varepsilon^p t^{\frac{\theta(n,k,\mu,p)}{p-1}}\right)\right)\right) t^{\alpha/(p-1)} (t-T_1)^{-\beta/(p-1)}, \label{final lb U0 subcrit}
\end{align} where for the exponent of $t$ in the last equality we used 
\begin{align}
\tfrac{\beta-\alpha}{p-1}+b_0-a_0 & = \tfrac{2}{p-1} -(1-k)n +(1-k)(n-1)+\tfrac{k}{2}p+2-\left((1-k)\tfrac{n-1}{2}+\tfrac{\mu}{2}\right)p \notag \\
& = \tfrac{2p}{p-1} -(1-k)-\left((1-k)\tfrac{n-1}{2}+\tfrac{\mu-k}{2}\right)p \notag \\
& = \tfrac{1}{p-1} \left\{1-k+\left((1-k)\tfrac{n+1}{2}+\tfrac{\mu+3k}{2}\right)p-\left((1-k)\tfrac{n-1}{2}+\tfrac{\mu-k}{2}\right)p^2\right\} = \tfrac{\theta(n,k,\mu,p)}{p-1}. \label{exponent t subcrit}
\end{align} Note that $\theta(n,k,\mu,p)$ is a positive quantity for $p<p_0\big(k,n+\tfrac{\mu}{1-k}\big)$.  Let us fix $\varepsilon_0>0$ sufficiently small so that
\begin{align*}
\varepsilon_0^{-\frac{p(p-1)}{\theta(n,k,\mu,p)}}\geqslant 2^{1-(b_0(p-1)+\beta)/\theta(n,k,\mu,p)} T_1.
\end{align*} Then, for any $\varepsilon\in(0,\varepsilon_0]$ and for $t\geqslant 2^{(b_0(p-1)+\beta)/\theta(n,k,\mu,p)}\varepsilon^{-\frac{p(p-1)}{\theta(n,k,\mu,p)}}$ it results
\begin{align*}
t\geqslant 2T_1 \qquad \mbox{and} \qquad 2^{-b_0-\beta/(p-1)}E_0 \varepsilon^p t^{\frac{\theta(n,k,\mu,p)}{p-1}}>1,
\end{align*} also, letting $j\to \infty$ in \eqref{final lb U0 subcrit} it turns out that $U_0(t)$ blows up. Consequently, we proved the blowing -- up of $U_0$ in finite time for any $\varepsilon\in(0,\varepsilon_0]$ whenever $p<p_0\big(k,n+\tfrac{\mu}{1-k}\big)$ and, moreover, as byproduct we found the upper bound estimate for the lifespan $T(\varepsilon)\lesssim\varepsilon^{-\frac{p(p-1)}{\theta(n,k,\mu,p)}} $ as well.

So far we applied only the lower bound estimate in \eqref{lb estimate U0 subcrit} for $U_0$. Nevertheless, we also proved another lower bound estimate for $U_0$, namely, \eqref{lb estimate U0 trivial}. Using \eqref{lb estimate U0 trivial} instead of \eqref{lb estimate U0 subcrit}, the initial values for the parameters in \eqref{lb estimates U0 step j subcrit} are $a_0=b_0=0$ and $D_0\approx \varepsilon$. Repeating the computations analogously as in the previous case and using
\begin{align*}
\log D_j \geqslant p^j  \log \left( E_1 \varepsilon\right)
\end{align*} for $j\geqslant j_1$, where $j_1$ is a suitable nonnegative integer and $E_1$ is a suitable positive constant, in place of \eqref{lb Dj subcrit} and
\begin{align*}
\tfrac{\beta-\alpha}{p-1}+b_0-a_0 & = \tfrac{2}{p-1} -(1-k)n 
\end{align*} instead of \eqref{exponent t subcrit}, we obtain immediately the blow -- up of $U_0$ in finite time for $p<p_1(k,n)$ and the corresponding upper bound estimate for the lifespan in \eqref{upper bound est lifespan subcrit thm}.

\section{Critical case: part I} \label{Section critical case p0}

In order to study the critical case $p=p_0\big(k,n+\tfrac{\mu}{1-k}\big)$, we will follow an approach which is based on the technique introduced in \cite{WakYor18} and subsequently applied to different frameworks in \cite{WakYor18Damp,PalTak19,PalTak19mix,LinTu19,ChenPal19MGT,ChenPal19SWENM,PTY20}.

From \eqref{final lb U0 subcrit} it is clear that we can no longer employ $U_0$ as functional to study the blow -- up dynamic. Therefore, we need to sharpen the choice of the functional. More precisely, we are going to consider a weighted space average of a local in time solution to \eqref{Semi EdeS k damped}. Hence, the blow -- up result will be proved by applying the so -- called \emph{slicing procedure} in an iteration argument to show a sequence of lower bound estimates for the above mentioned functional. 
Throughout this section we work under the assumptions of Theorem \ref{Theorem critical case p0}.

The section is organized as follows: in Section \ref{Subsection Aux functions} we determine a pair of auxiliary functions which have a fundamental role in the definition of the time -- dependent functional and in the determination of the iteration frame, while in Section \ref{Subsection estimates auxiliary functions} we establish some fundamental properties for these functions; finally, in Section \ref{Subsection iteration frame} we determine the iteration frame for the weighted space average whose dynamic provides the blow -- up result. 

\subsection{Auxiliary functions} \label{Subsection Aux functions}  

In this section, we introduce two auxiliary functions (see $\xi_q$ and $\eta_q$ below). These  auxiliary functions represent a generalization of the solution to the classical free wave equation given in \cite{Zhou07} and are defined by using the remarkable function  $\varphi$ introduced in \cite{YZ06},  that we have already used in the section for the subcritical case (the definition of this function is given in \eqref{def Yordanov-Zhang function}).

According to our purpose of introducing the auxiliary functions, we begin by determining the solutions $y_j=y_j(t,s;\lambda,k,\mu)$, $j\in\{0,1\}$ of the non -- autonomous, parameter -- dependent, ordinary Cauchy problems
\begin{align}\label{CP yj(t,s;lambda,k)} 
\begin{cases}  \partial_t^2 y_j(t,s;\lambda,k,\mu) - \lambda^2 t^{-2k} y_j(t,s;\lambda,k,\mu)+\mu \, t^{-1} y_j(t,s;\lambda,k,\mu) = 0, &  t>s, \\
y_j(s,s;\lambda,k,\mu)= \delta_{0j}, \\
 \partial_t y_j(s,s;\lambda,k,\mu)= \delta_{1j},
\end{cases}
\end{align} where $\delta_{ij}$ denotes the Kronecker delta, $s\geqslant 1$ is the initial time and $\lambda>0$ is a real parameter. 
To find a system of independent solutions to 
\begin{align}\label{equation y}
\frac{\mathrm{d}^2 y}{\mathrm{d} t^2} -\lambda^2 t^{-2k}y+\mu \, t^{-1} \frac{\mathrm{d} y}{\mathrm{d} t} =0
\end{align} we start by performing the change of variable $\tau= \tau(t;\lambda,k)\doteq  \lambda \phi_k(t)$. By the straightforward relations
\begin{align*}
\frac{\mathrm{d} y}{\mathrm{d} t} &= \lambda t^{-k} \frac{\mathrm{d} y}{\mathrm{d} \tau}, \qquad \frac{\mathrm{d}^2 y}{\mathrm{d} t^2} = \lambda^2 t^{-2k} \frac{\mathrm{d}^2 y}{\mathrm{d} \tau^2}-\lambda k t^{-k-1} \frac{\mathrm{d} y}{\mathrm{d} \tau},
\end{align*} it follows that $y$ solves \eqref{equation y} if and only if 
\begin{align}\label{equation y tau}
\tau \frac{\mathrm{d}^2 y}{\mathrm{d} \tau^2} +\frac{\mu-k}{1-k} \frac{\mathrm{d} y}{\mathrm{d} \tau}-\tau y=0.
\end{align} Carrying out the transformation $y(\tau)=\tau^\nu w(\tau)$ with  $\nu= \nu(k,\mu)\doteq \tfrac{1-\mu}{2(1-k)}$, it turns out that $y$ solves \eqref{equation y tau} if and only if $w$ solves the modified Bessel equation of order $\nu$
\begin{align}\label{Bessel equation w}
\tau^2\frac{\mathrm{d}^2 w}{\mathrm{d} \tau^2} +\tau\frac{\mathrm{d} w}{\mathrm{d} \tau}-\left(\nu^2+\tau^2 \right) w=0.
\end{align} 
Employing  the modified Bessel function of first and second kind of order $\nu$, denoted, respectively, by $\mathrm{I}_\nu(\tau)$ and $\mathrm{K}_\nu(\tau)$, as independent solutions to \eqref{Bessel equation w}, then, we obtain
\begin{align*}
V_0(t;\lambda,k,\mu) &\doteq \tau ^\nu \mathrm{I}_\nu (\tau) = (\lambda \phi_k(t))^\nu \mathrm{I}_\nu (\lambda \phi_k(t)), \\
V_1(t;\lambda,k,\mu) & \doteq \tau ^\nu \mathrm{K}_\nu (\tau)  = (\lambda \phi_k(t))^\nu \mathrm{K}_\nu (\lambda \phi_k(t))
\end{align*} as basis for the space of solutions to \eqref{equation y}.

\begin{proposition} \label{Proposition representations y0 and y1} The functions
\begin{align}
y_0(t,s;\lambda,k,\mu) &\doteq \lambda \, \phi_k(s) \, s^{\frac{\mu-1}{2}}  t^{\frac{1-\mu}{2}} \big[\mathrm{I}_{\nu-1}(\lambda \phi_k (s))\, \mathrm{K}_{\nu}(\lambda \phi_k (t))+\mathrm{K}_{\nu-1}(\lambda \phi_k (s))\,\mathrm{I}_{\nu}(\lambda \phi_k (t))\big],  \label{def y0(t,s;lambda,k)} \\
y_1(t,s;\lambda,k,\mu) &\doteq  (1-k)^{-1} s^{\frac{1+\mu}{2}}  t^{\frac{1-\mu}{2}} \big[\mathrm{K}_{\nu}(\lambda \phi_k (s))\, \mathrm{I}_{\nu}(\lambda \phi_k (t))-\mathrm{I}_{\nu}(\lambda \phi_k (s))\,\mathrm{K}_{\nu}(\lambda \phi_k (t))\big],  \label{def y1(t,s;lambda,k)} 
\end{align} solve the Cauchy problems \eqref{CP yj(t,s;lambda,k)} for $j=0$ and $j=1$, respectively, where $\nu= \frac{1-\mu}{2(1-k)}$ and $\mathrm{I}_\nu,\mathrm{K}_\nu$ denote the modified Bessel function of order $\nu$ of the first and second kind, respectively.
\end{proposition}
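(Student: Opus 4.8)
The plan is to \emph{verify} the two explicit formulas directly, exploiting the fact that a fundamental system $V_0,V_1$ of \eqref{equation y} is already in hand. (The formulas themselves are found by the standard fundamental-matrix construction: with $W(s)$ the Wronskian of $V_0,V_1$, one sets $y_0(t,s)=W(s)^{-1}\big(\partial_s V_1(s)\,V_0(t)-\partial_s V_0(s)\,V_1(t)\big)$ and $y_1(t,s)=W(s)^{-1}\big(V_0(s)\,V_1(t)-V_1(s)\,V_0(t)\big)$, then simplifies using the Bessel recurrences; but for the proof it is cleanest to check the stated expressions.) The first observation is that, since $(1-k)\nu=\tfrac{1-\mu}{2}$, one has $V_0(t)=\lambda^\nu(1-k)^{-\nu}\,t^{\frac{1-\mu}{2}}\mathrm{I}_\nu(\lambda\phi_k(t))$ and $V_1(t)=\lambda^\nu(1-k)^{-\nu}\,t^{\frac{1-\mu}{2}}\mathrm{K}_\nu(\lambda\phi_k(t))$. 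Consequently, for each fixed $s$, the right-hand sides of \eqref{def y0(t,s;lambda,k)} and \eqref{def y1(t,s;lambda,k)} are, as functions of $t$, linear combinations of $V_0(t)$ and $V_1(t)$ with coefficients depending only on $s$. Hence both candidates automatically solve the homogeneous equation \eqref{equation y} in the variable $t$, and it remains only to check the Cauchy data at $t=s$ prescribed in \eqref{CP yj(t,s;lambda,k)}.

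To evaluate the data I would collect the standard relations for modified Bessel functions: the recurrences $\mathrm{I}_\nu'(z)=\mathrm{I}_{\nu-1}(z)-\tfrac{\nu}{z}\mathrm{I}_\nu(z)$ and $\mathrm{K}_\nu'(z)=-\mathrm{K}_{\nu-1}(z)-\tfrac{\nu}{z}\mathrm{K}_\nu(z)$, together with the Wronskian identity $\mathrm{I}_\nu(z)\mathrm{K}_\nu'(z)-\mathrm{I}_\nu'(z)\mathrm{K}_\nu(z)=-\tfrac1z$; combining these gives the cross-product relation $\mathrm{I}_{\nu-1}(z)\mathrm{K}_\nu(z)+\mathrm{I}_\nu(z)\mathrm{K}_{\nu-1}(z)=\tfrac1z$. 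Writing $z_t=\lambda\phi_k(t)$ and setting $t=s$ in \eqref{def y0(t,s;lambda,k)}, the power $s^{\frac{\mu-1}{2}}s^{\frac{1-\mu}{2}}=1$ and the bracket collapses by the cross-product relation to $1/z_s$, whence $y_0(s,s)=\lambda\phi_k(s)\cdot z_s^{-1}=1$. In \eqref{def y1(t,s;lambda,k)} the bracket at $t=s$ is the antisymmetric combination $\mathrm{K}_\nu(z_s)\mathrm{I}_\nu(z_s)-\mathrm{I}_\nu(z_s)\mathrm{K}_\nu(z_s)=0$, so $y_1(s,s)=0$.

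The delicate part — and the step I expect to be the main obstacle — is verifying the first-order data $\partial_t y_0(s,s)=0$ and $\partial_t y_1(s,s)=1$, since here several Bessel cross-products must be assembled correctly. Differentiating in $t$ (using $\tfrac{d}{dt}z_t=\lambda t^{-k}$ and the recurrences above) and then setting $t=s$, the terms from differentiating the factor $t^{\frac{1-\mu}{2}}$ produce the symmetric cross-product $\mathrm{I}_{\nu-1}\mathrm{K}_\nu+\mathrm{K}_{\nu-1}\mathrm{I}_\nu=1/z_s$, while the terms from differentiating the Bessel arguments produce $\mathrm{I}_{\nu-1}(z_s)\mathrm{K}_\nu'(z_s)+\mathrm{K}_{\nu-1}(z_s)\mathrm{I}_\nu'(z_s)=-\nu/z_s^{2}$ for $y_0$ and the antisymmetric $\mathrm{K}_\nu(z_s)\mathrm{I}_\nu'(z_s)-\mathrm{I}_\nu(z_s)\mathrm{K}_\nu'(z_s)=1/z_s$ for $y_1$. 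After the elementary simplification $\lambda s^{-k}/z_s=(1-k)s^{-1}$, the expression for $\partial_t y_0(s,s)$ reduces to $s^{-1}\big(\tfrac{1-\mu}{2}-\nu(1-k)\big)$, which vanishes precisely because $\nu=\tfrac{1-\mu}{2(1-k)}$; likewise $\partial_t y_1(s,s)$ reduces to $s^{\frac{1+\mu}{2}+\frac{1-\mu}{2}-1}=1$. Having matched all four Cauchy conditions, and with the ODE already guaranteed by the first paragraph, this confirms both representations and completes the proof.
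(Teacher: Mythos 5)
Your proof is correct, and it takes a genuinely different route from the paper's. The paper proceeds \emph{constructively}: it writes $y_j(t,s)=a_j(s)V_0(t)+b_j(s)V_1(t)$, determines the coefficients by inverting the $2\times 2$ initial-data matrix at $t=s$, computes the Wronskian $\mathcal{W}(V_0,V_1)(t)=-c_{k,\mu}^{-1}\lambda^{2\nu}t^{-\mu}$ from $\mathcal{W}(\mathrm{I}_\nu,\mathrm{K}_\nu)(z)=-1/z$, and then simplifies the resulting Bessel products with the recurrences (including the cancellation that produces the index shift $\nu\to\nu-1$ in \eqref{def y0(t,s;lambda,k)}), so that the closed forms \eqref{def y0(t,s;lambda,k)}--\eqref{def y1(t,s;lambda,k)} emerge at the end. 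You instead \emph{verify}: the key observation that $(\phi_k(t))^\nu=(1-k)^{-\nu}t^{\frac{1-\mu}{2}}$ shows each candidate is, for fixed $s$, an $s$-dependent linear combination of $V_0(t),V_1(t)$, so it solves \eqref{equation y} automatically, and the problem reduces to four pointwise evaluations at $t=s$. These you settle with the cross-product identities $\mathrm{I}_{\nu-1}(z)\mathrm{K}_\nu(z)+\mathrm{I}_\nu(z)\mathrm{K}_{\nu-1}(z)=1/z$ and $\mathrm{I}_{\nu-1}(z)\mathrm{K}_\nu'(z)+\mathrm{K}_{\nu-1}(z)\mathrm{I}_\nu'(z)=-\nu/z^2$, both of which you correctly derive from the same Wronskian and recurrence relations the paper uses; I checked your four evaluations ($y_0(s,s)=1$, $y_1(s,s)=0$, $\partial_t y_0(s,s)=s^{-1}(\tfrac{1-\mu}{2}-\nu(1-k))=0$, $\partial_t y_1(s,s)=s^{\frac{1+\mu}{2}+\frac{1-\mu}{2}-1}=1$, using $\lambda s^{-k}/(\lambda\phi_k(s))=(1-k)s^{-1}$) and they are all right. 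What each approach buys: the paper's derivation explains where the formulas come from and gives them as the \emph{unique} solution of the initial-value system (nonvanishing Wronskian), whereas your verification is shorter, more mechanical, and less exposed to algebraic slips in the matrix inversion, at the cost of presenting the formulas as given --- a cost your parenthetical remark about the fundamental-matrix construction essentially discharges. Since the proposition only asserts that the stated functions solve \eqref{CP yj(t,s;lambda,k)}, verification is a complete proof, and no uniqueness statement is needed.
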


\begin{proof}
Since we proved that $V_0,V_1$ form a system of independent solutions to \eqref{equation y}, we may express the solutions to \eqref{CP yj(t,s;lambda,k)} as linear combinations of $V_0,V_1$ in the following way
\begin{align} \label{representation yj with aj and bj}
y_j(t,s;\lambda,k,\mu) = a_j(s;\lambda,k,\mu) V_0(t;\lambda,k,\mu)+  b_j(s;\lambda,k,\mu) V_1(t;\lambda,k,\mu)
\end{align} for suitable coefficients $a_j(s;\lambda,k,\mu), b_j(s;\lambda,k,\mu)$, with $j\in\{0,1\}$. 

We can describe the initial conditions $\partial^i_t y_j(s,s;\lambda,k)=\delta_{ij}$ through the system
\begin{align*}
\left(\begin{array}{cc}
V_0(s;\lambda,k,\mu) & V_1(s;\lambda,k,\mu)  \\ 
\partial_t V_0(s;\lambda,k,\mu) & \partial_t V_1(s;\lambda,k,\mu) 
\end{array} \right) \left(\begin{array}{cc}
a_0(s;\lambda,k,\mu) & a_1(s;\lambda,k,\mu)  \\ 
b_0(s;\lambda,k,\mu) & b_1(s;\lambda,k,\mu) 
\end{array} \right) = I,
\end{align*} where $I$ denotes the identity matrix. Also, to determine the coefficients in \eqref{representation yj with aj and bj}, we calculate the inverse matrix
\begin{align}\label{inverse matrix}
\left(\begin{array}{cc}
V_0(s;\lambda,k,\mu) & V_1(s;\lambda,k,\mu)  \\ 
\partial_t V_0(s;\lambda,k,\mu) & \partial_t V_1(s;\lambda,k,\mu) 
\end{array} \right)^{-1} = \left(\mathcal{W}(V_0,V_1)(s;\lambda,k,\mu)\right)^{-1}\left(\begin{array}{cc}
 \partial_t V_1(s;\lambda,k,\mu)  & -V_1(s;\lambda,k,\mu)  \\ 
-\partial_t V_0(s;\lambda,k,\mu) & V_0(s;\lambda,k,\mu) 
\end{array} \right),
\end{align} where $\mathcal{W}(V_0,V_1)$ denotes the Wronskian of $V_0,V_1$. Next, we compute explicitly the function $\mathcal{W}(V_0,V_1)$. Thanks to
\begin{align*}
\partial_t V_0(t;\lambda,k,\mu) & =\nu (\lambda\phi_k(t))^{\nu-1} \lambda \phi_k'(t) \, \mathrm{I}_\nu(\lambda \phi_k(t))+ (\lambda\phi_k(t))^{\nu} \, \mathrm{I}_\nu'(\lambda \phi_k(t))  \lambda \phi_k'(t) , \\
\partial_t V_1(t;\lambda,k,\mu) &= \nu (\lambda\phi_k(t))^{\nu-1} \lambda \phi_k'(t) \, \mathrm{K}_\nu(\lambda \phi_k(t))+ (\lambda\phi_k(t))^{\nu} \, \mathrm{K}_\nu'(\lambda \phi_k(t))  \lambda \phi_k'(t),
\end{align*} recalling $\phi'_k(t)=t^{-k}$ and $2\nu-1=\frac{k-\mu}{1-k}$, we can express $\mathcal{W}(V_0,V_1)$ as follows:
\begin{align*}
\mathcal{W}(V_0,V_1)(t;\lambda,k,\mu) & = (\lambda \phi_k(t))^{2\nu} (\lambda \phi_k'(t)) \big\{\mathrm{K}_\nu'(\lambda \phi_k(t))\, \mathrm{I}_\nu(\lambda \phi_k(t)) -\mathrm{I}_\nu'(\lambda \phi_k(t))\, \mathrm{K}_\nu(\lambda \phi_k(t)) \big\} \\
& = (\lambda \phi_k(t))^{2\nu} (\lambda \phi_k'(t)) \mathcal{W}(\mathrm{I}_\nu,\mathrm{K}_\nu) (\lambda\phi_k(t)) = -(\lambda \phi_k(t))^{2\nu-1} (\lambda \phi_k'(t))\\
& = -\lambda^{2\nu}  (\phi_k(t))^{2\nu-1}  \phi_k'(t) = -c_{k,\mu}^{-1} \lambda^{2\nu} t^{-\mu},
\end{align*} where $c_{k,\mu}\doteq (1-k)^{\frac{k-\mu}{1-k}}$ and in the third equality we used the value of the Wronskian of $\mathrm{I}_\nu,\mathrm{K}_\nu$
\begin{align*}
\mathcal{W}(\mathrm{I}_\nu,\mathrm{K}_\nu) (z)= \mathrm{I}_\nu (z) \frac{\partial \,\mathrm{K}_\nu}{\partial z}(z)- \mathrm{K}_\nu (z)  \frac{\partial \,\mathrm{I}_\nu}{\partial z}(z)=- \frac1z.
\end{align*} 
 Plugging the previously determined representation of  $\mathcal{W}(V_0,V_1)$ in \eqref{inverse matrix}, we have
\begin{align*}
\left(\begin{array}{cc}
a_0(s;\lambda,k,\mu) & a_1(s;\lambda,k,\mu)  \\ 
b_0(s;\lambda,k,\mu) & b_1(s;\lambda,k,\mu) 
\end{array} \right) = c_{k,\mu} \lambda^{-2\nu} s^{\mu} \left(\begin{array}{cc}
- \partial_t V_1(s;\lambda,k,\mu)  & V_1(s;\lambda,k,\mu)  \\ 
\partial_t V_0(s;\lambda,k,\mu) & -V_0(s;\lambda,k,\mu) 
\end{array} \right).
\end{align*}
Let us begin by showing \eqref{def y0(t,s;lambda,k)}. Using the above representation of $a_0(s;\lambda,k\mu),b_0(s;\lambda,k,\mu)$ in \eqref{representation yj with aj and bj}, we find
\begin{align*}
y_0(t,s;\lambda,k,\mu) &= c_{k,\mu} \lambda^{-2\nu} s^{\mu} \big\{\partial_tV_0(s;\lambda,k,\mu) V_1(t;\lambda,k,\mu)-\partial_tV_1(s;\lambda,k,\mu) V_0(t;\lambda,k,\mu)\big\} \\
& = c_{k,\mu} \, \nu \, s^{\mu} \phi_k'(s) (\phi_k(s))^{\nu-1}(\phi_k(t))^{\nu}   \big\{ \mathrm{I}_\nu(\lambda \phi_k(s))\,  \mathrm{K}_\nu(\lambda \phi_k(t)) -\mathrm{K}_\nu(\lambda \phi_k(s))\,  \mathrm{I}_\nu(\lambda \phi_k(t)) \big\} \\
& \quad + c_{k,\mu} \,\lambda \, s^{\mu} \phi_k'(s) (\phi_k(s))^{\nu} (\phi_k(t))^{\nu}  \big\{ \mathrm{I}'_\nu(\lambda \phi_k(s))\,  \mathrm{K}_\nu(\lambda \phi_k(t)) -\mathrm{K}'_\nu(\lambda \phi_k(s))\,  \mathrm{I}_\nu(\lambda \phi_k(t)) \big\}.
\end{align*}
Using the following recursive relations for the derivatives of the modified Bessel functions
\begin{align*}
\frac{\partial \,\mathrm{I}_\nu}{\partial z}(z) & = -\frac{ \nu}{z} \, \mathrm{I}_\nu(z)+ \mathrm{I}_{\nu-1}(z), \\
\frac{\partial  \,\mathrm{K}_\nu}{\partial z}(z) & = -\frac{ \nu}{z} \, \mathrm{K}_\nu(z)- \mathrm{K}_{\nu-1}(z),
\end{align*} there is a cancellation in the last relation, so, we arrive at
\begin{align}
y_0(t,s;\lambda,k,\mu) & = c_{k,\mu} \, \lambda \, s^\mu \phi_k'(s) (\phi_k(s)\phi_k(t))^{\nu}  \big\{ \mathrm{I}_{\nu-1}(\lambda \phi_k(s))\,  \mathrm{K}_\nu(\lambda \phi_k(t)) +\mathrm{K}_{\nu-1}(\lambda \phi_k(s))\,  \mathrm{I}_\nu(\lambda \phi_k(t)) \big\}. \label{intermediate representation y0}
\end{align} Thanks to $$c_{k,\mu} \, s^\mu \phi_k'(s) (\phi_k(s)\phi_k(t))^{\nu} = (1-k)^{-1} s^{\mu-k }(st)^{\frac{1-\mu}{2}}  =\phi_k(s) s^{\frac{\mu-1}{2}} t^{\frac{1-\mu}{2}},$$ from  \eqref{intermediate representation y0} it follows immediately \eqref{def y0(t,s;lambda,k)}. Let us show now the representation for $y_1$. Plugging the above determined expressions for $a_1(s;\lambda,k,\mu),b_1(s;\lambda,k,\mu)$ in \eqref{representation yj with aj and bj}, we get
\begin{align}
y_1(t,s;\lambda,k,\mu) &= c_{k,\mu} \lambda^{-2\nu} s^\mu \big\{V_1(s;\lambda,k,\mu) V_0(t;\lambda,k,\mu)-V_0(s;\lambda,k,\mu) V_1(t;\lambda,k,\mu)\big\} \notag \\
& = c_{k,\mu} \lambda^{-2\nu} s^\mu (\lambda\phi_k(s))^{\nu} (\lambda\phi_k(t))^{\nu}\big\{  \mathrm{K}_\nu(\lambda \phi_k(s))   \, \mathrm{I}_\nu(\lambda \phi_k(t)) -\mathrm{I}_\nu(\lambda \phi_k(s)) \, \mathrm{K}_\nu(\lambda \phi_k(t))\big\} \notag \\
& = c_{k,\mu} s^\mu (\phi_k(s) \phi_k(t))^{\nu}\big\{  \mathrm{K}_\nu(\lambda \phi_k(s))   \, \mathrm{I}_\nu(\lambda \phi_k(t)) -\mathrm{I}_\nu(\lambda \phi_k(s)) \, \mathrm{K}_\nu(\lambda \phi_k(t))\big\} . \label{intermediate representation y1}
\end{align} Hence, due to $c_{k,\mu} s^\mu (\phi_k(s) \phi_k(t))^{\nu} = (1-k)^{-1} s^{\frac{1+\mu}{2}}  t^{\frac{1-\mu}{2}}$, from \eqref{intermediate representation y1} it results \eqref{def y1(t,s;lambda,k)}. The proof is complete.
\end{proof}

\begin{lemma} Let $y_0$, $y_1$ be the functions defined in \eqref{def y0(t,s;lambda,k)} and \eqref{def y1(t,s;lambda,k)}, respectively. Then, the following identities are satisfied for any $t\geqslant s\geqslant 1$
\begin{align}
& \frac{\partial y_1}{\partial s}(t,s;\lambda,k,\mu)= -y_0(t,s;\lambda,k,\mu)+\mu s^{-1} y_1(t,s;\lambda,k,\mu), \label{dy1/ds= -y0} \\
& \frac{\partial^2 y_1}{\partial s^2}(t,s;\lambda,k,\mu) -\lambda^2 s^{-2k}y_1(t,s;\lambda,k,\mu)-\mu s^{-1} \frac{\partial y_1}{\partial s}(t,s;\lambda,k,\mu)+\mu s^{-2} y_1(t,s;\lambda,k,\mu)= 0. \label{y1 adjoiunt equation} 
\end{align} 
\end{lemma}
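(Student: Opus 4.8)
The plan is to prove both identities by direct differentiation of the explicit formulas \eqref{def y0(t,s;lambda,k)} and \eqref{def y1(t,s;lambda,k)} in the variable $s$, using nothing beyond the recursion relations for the derivatives of the modified Bessel functions (in both their lowering form $\mathrm{I}_\nu'(z)=\mathrm{I}_{\nu-1}(z)-\tfrac{\nu}{z}\mathrm{I}_\nu(z)$, $\mathrm{K}_\nu'(z)=-\mathrm{K}_{\nu-1}(z)-\tfrac{\nu}{z}\mathrm{K}_\nu(z)$ recorded in the proof of Proposition \ref{Proposition representations y0 and y1} and their raising counterparts $\mathrm{I}_\nu'(z)=\mathrm{I}_{\nu+1}(z)+\tfrac{\nu}{z}\mathrm{I}_\nu(z)$, $\mathrm{K}_\nu'(z)=-\mathrm{K}_{\nu+1}(z)+\tfrac{\nu}{z}\mathrm{K}_\nu(z)$ already used earlier in the paper) together with the two algebraic consequences of $\nu=\tfrac{1-\mu}{2(1-k)}$, namely $\nu(1-k)=\tfrac{1-\mu}{2}$ and $(\nu-1)(1-k)=\tfrac{2k-\mu-1}{2}$, which I would record at the outset. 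Conceptually, I expect the two identities to be exactly the statement that the pair $(y_1,y_0)$ solves, as functions of $s$, the first-order companion system $\partial_s y_1=\mu s^{-1}y_1-y_0$ and $\partial_s y_0=-\lambda^2 s^{-2k}y_1$, and that \eqref{y1 adjoiunt equation} is merely the elimination of $y_0$ from this system.

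For \eqref{dy1/ds= -y0}, I would write $y_1=(1-k)^{-1}s^{\frac{1+\mu}{2}}t^{\frac{1-\mu}{2}}F(s)$, where $F(s)$ is the bracket in \eqref{def y1(t,s;lambda,k)}, and differentiate. Since $\tfrac{\mathrm{d}}{\mathrm{d}s}\phi_k(s)=s^{-k}$, applying the lowering recursion relations to $F'(s)$ splits it into a multiple of $F(s)$ (coming from the $-\tfrac{\nu}{z}$ terms) and a multiple of the bracket appearing in $y_0$ (coming from the $\mathrm{I}_{\nu-1},\mathrm{K}_{\nu-1}$ terms). Using $\tfrac{s^{-k}}{\phi_k(s)}=\tfrac{1-k}{s}$, the latter contribution reassembles precisely into $-y_0$, while the former combines with the term produced by differentiating the power $s^{\frac{1+\mu}{2}}$; the resulting coefficient of $F(s)$ equals $\tfrac{1+\mu}{2}-\nu(1-k)=\mu$, which is exactly $\mu s^{-1}y_1$. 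This yields \eqref{dy1/ds= -y0}.

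For \eqref{y1 adjoiunt equation}, rather than expanding $\partial_s^2 y_1$ from scratch, I would differentiate \eqref{dy1/ds= -y0} once more, obtaining $\partial_s^2 y_1=-\partial_s y_0-\mu s^{-2}y_1+\mu s^{-1}\partial_s y_1$, hence $\partial_s^2 y_1-\mu s^{-1}\partial_s y_1+\mu s^{-2}y_1=-\partial_s y_0$. Thus the entire left-hand side of \eqref{y1 adjoiunt equation} collapses to $-\partial_s y_0-\lambda^2 s^{-2k}y_1$, and the identity reduces to the companion relation $\partial_s y_0=-\lambda^2 s^{-2k}y_1$. I would prove this by the same mechanism: differentiating \eqref{def y0(t,s;lambda,k)} and applying the raising recursion relations (with index $\nu-1$) to $\mathrm{I}_{\nu-1}'$ and $\mathrm{K}_{\nu-1}'$, the $s$-derivative of the $y_0$-bracket decomposes into a multiple of that same bracket plus a multiple of the $y_1$-bracket $F(s)$. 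The coefficient multiplying the $y_0$-bracket is $\big(1-k+\tfrac{\mu-1}{2}\big)+(\nu-1)(1-k)$, which vanishes because $(\nu-1)(1-k)=\tfrac{2k-\mu-1}{2}$; what survives is exactly $-\lambda^2 s^{-2k}y_1$ once the powers of $s$ are matched via $1-2k+\tfrac{\mu-1}{2}=-2k+\tfrac{1+\mu}{2}$.

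The computations are elementary, so the only genuine obstacle is bookkeeping: organizing the four Bessel products and the fractional powers of $s$ and $t$ so that the two decisive cancellations — the coefficient of $F(s)$ collapsing to $\mu$ in the first step, and the coefficient of the $y_0$-bracket vanishing in the second — are transparent. Both cancellations rest entirely on the specific value $\nu=\tfrac{1-\mu}{2(1-k)}$, which is why I would substitute $\nu(1-k)=\tfrac{1-\mu}{2}$ and $(\nu-1)(1-k)=\tfrac{2k-\mu-1}{2}$ only at the very end.
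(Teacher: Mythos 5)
Your proof is correct, but it follows a genuinely different route from the paper's. The paper never touches the Bessel recursions at this stage: it sets $z_0(t)\doteq y_0(t,1;\lambda,k,\mu)$, $z_1(t)\doteq y_1(t,1;\lambda,k,\mu)$, computes the Wronskian $\mathcal{W}(z_0,z_1)(s)=s^{-\mu}$ from Liouville's formula, and represents $y_0(t,s)=s^{\mu}\{z_1'(s)z_0(t)-z_0'(s)z_1(t)\}$, $y_1(t,s)=s^{\mu}\{z_0(s)z_1(t)-z_1(s)z_0(t)\}$; then \eqref{dy1/ds= -y0} drops out by differentiating the bilinear expression, and \eqref{y1 adjoiunt equation} follows by differentiating once more, using only that $z_0,z_1$ solve \eqref{equation y}, and substituting \eqref{dy1/ds= -y0} at the end. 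Your argument instead verifies the identities directly on the explicit formulas \eqref{def y0(t,s;lambda,k)}--\eqref{def y1(t,s;lambda,k)}, and all your decisive cancellations check out: the coefficient $\tfrac{1+\mu}{2}-\nu(1-k)=\mu$ in the first step, the vanishing of $\bigl(1-k+\tfrac{\mu-1}{2}\bigr)+(\nu-1)(1-k)$ in the second, and the power matching $1-2k+\tfrac{\mu-1}{2}=\tfrac{1+\mu}{2}-2k$ that reassembles $-\lambda^2 s^{-2k}y_1$. Your reduction of \eqref{y1 adjoiunt equation} to the companion relation $\partial_s y_0=-\lambda^2 s^{-2k}y_1$ is a clean organizing device (that relation is only implicit in the paper's computation, where the same substitution of \eqref{dy1/ds= -y0} closes the proof), and your conceptual reading of the pair $(y_1,y_0)$ as solving a first-order companion system in $s$ is exactly right. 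What each approach buys: the paper's is structural and portable --- it works for any equation of the form \eqref{equation y} regardless of the explicit fundamental system, with no special-function bookkeeping --- but it leans on the representation of $y_0,y_1$ in terms of $z_0,z_1$, which the paper justifies only by reference to the computations in Proposition \ref{Proposition representations y0 and y1}; yours is self-contained given the explicit Bessel formulas and the standard raising/lowering recursions, at the cost of heavier bookkeeping, which you manage correctly.
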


\begin{remark} As the operator $\partial_s^2 -\lambda^2s^{-2k}-\mu s^{-1} \partial_s+\mu s^{-2}$ is the formal adjoint of $\partial_t^2 -\lambda^2t^{-2k}+\mu t^{-1} \partial_t$, in particular, \eqref{dy1/ds= -y0} and \eqref{y1 adjoiunt equation} tell us that $y_1$ solves also the adjoint problem to \eqref{equation y} with final conditions $(0,-1)$. 
\end{remark}

\begin{proof} Let us introduce the pair of independent solutions to \eqref{equation y}
\begin{align*}
 z_0(t;\lambda,k,\mu) & \doteq y_0(t,1;\lambda,k,\mu), \\
 z_1(t;\lambda,k,\mu) & \doteq y_1(t,1;\lambda,k,\mu).
\end{align*} Since the Wronskian $\mathcal{W}(z_0,z_1)(t;\lambda,k,\mu)$ solves the differential equation $\mathcal{W}'(z_0,z_1)=-\mu t^{-1} \mathcal{W}(z_0,z_1)$ with initial condition $\mathcal{W}(z_0,z_1)(1;\lambda,k,\mu)=1$, then, $\mathcal{W}(z_0,z_1)(t;\lambda,k,\mu)=t^{-\mu}$. Therefore,  repeating similar computations as in the proof of Proposition \ref{Proposition representations y0 and y1}, we may show the representations
\begin{align*}
y_0(t,s;\lambda,k,\mu) & = s^\mu \left\{ z_1'(s;\lambda,k,\mu)  z_0(t;\lambda,k,\mu)- z_0'(s;\lambda,k,\mu)  z_1(t;\lambda,k,\mu)\right\},\\
y_1(t,s;\lambda,k,\mu) & =  s^\mu \left\{ z_0(s;\lambda,k,\mu)  z_1(t;\lambda,k,\mu)- z_1(s;\lambda,k,\mu)  z_0(t;\lambda,k,\mu)\right\}.
\end{align*}  Let us prove \eqref{dy1/ds= -y0}. Differentiating the second one of the previous representations with respect to $s$, we find
\begin{align*}
\frac{\partial y_1}{\partial s}(t,s;\lambda,k) & = \mu s^{\mu-1} \left\{ z_0(s;\lambda,k,\mu)  z_1(t;\lambda,k,\mu)- z_1(s;\lambda,k,\mu)  z_0(t;\lambda,k,\mu)\right\} \\ & \quad + s^\mu \left\{ z_0'(s;\lambda,k,\mu)  z_1(t;\lambda,k,\mu)- z_1'(s;\lambda,k,\mu)  z_0(t;\lambda,k,\mu)\right\} \\ &= \mu s^{-1} y_1(t,s;\lambda,k,\mu)-y_0(t,s;\lambda,k,\mu).
\end{align*} On the other hand, due to the fact that $z_0,z_1$ satisfy \eqref{equation y}, then,
\begin{align*}
 \frac{\partial^2 y_1}{\partial s^2}(t,s;\lambda,k) &  = s^\mu \left\{ z_0''(s;\lambda,k,\mu)  z_1(t;\lambda,k,\mu)- z_1''(s;\lambda,k,\mu)  z_0(t;\lambda,k,\mu)\right\} \\
 & \quad + 2 \mu s^{\mu-1} \left\{ z_0'(s;\lambda,k,\mu)  z_1(t;\lambda,k,\mu)- z_1'(s;\lambda,k,\mu)  z_0(t;\lambda,k,\mu)\right\} \\
 & \quad + \mu(\mu-1) s^{\mu-2} \left\{ z_0(s;\lambda,k,\mu)  z_1(t;\lambda,k,\mu)- z_1(s;\lambda,k,\mu)  z_0(t;\lambda,k,\mu)\right\} \\
 &= s^\mu \big\{ \left[\lambda^2 s^{-2k}z_0(s;\lambda,k,\mu)-\mu s^{-1}z_0'(s;\lambda,k,\mu)\right]  z_1(t;\lambda,k,\mu) \\ & \qquad \quad - \left[\lambda^2 s^{-2k}z_1(s;\lambda,k,\mu)-\mu s^{-1}z_1'(s;\lambda,k,\mu)\right] z_0(t;\lambda,k,\mu)\big\} \\
 & \quad + 2 \mu s^{\mu-1} \left\{ z_0'(s;\lambda,k,\mu)  z_1(t;\lambda,k,\mu)- z_1'(s;\lambda,k,\mu)  z_0(t;\lambda,k,\mu)\right\} \\
 & \quad + \mu(\mu-1) s^{\mu-2} \left\{ z_0(s;\lambda,k,\mu)  z_1(t;\lambda,k,\mu)- z_1(s;\lambda,k,\mu)  z_0(t;\lambda,k,\mu)\right\} \\
 &=  \lambda^2 s^{-2k} s^\mu \big\{ z_0(s;\lambda,k,\mu)  z_1(t;\lambda,k,\mu)  - z_1(s;\lambda,k,\mu) z_0(t;\lambda,k,\mu)\big\} \\
 & \quad +  \mu s^{\mu-1} \left\{ z_0'(s;\lambda,k,\mu)  z_1(t;\lambda,k,\mu)- z_1'(s;\lambda,k,\mu)  z_0(t;\lambda,k,\mu)\right\} \\
 & \quad + \mu(\mu-1) s^{\mu-2} \left\{ z_0(s;\lambda,k,\mu)  z_1(t;\lambda,k,\mu)- z_1(s;\lambda,k,\mu)  z_0(t;\lambda,k,\mu)\right\} \\
 &= \lambda^2 s^{-2k} y_1(t,s;\lambda,k,\mu) -\mu s^{-1} y_0(t,s;\lambda,k,\mu) +\mu(\mu-1) s^{-2} y_1(t,s;\lambda,k,\mu).
 \end{align*} Applying \eqref{dy1/ds= -y0}, from the last chain of equalities we get
 \begin{align*}
\frac{\partial^2 y_1}{\partial s^2}(t,s;\lambda,k)  &= \lambda^2 s^{-2k} y_1(t,s;\lambda,k,\mu) +\mu s^{-1} \left(\frac{\partial y_1}{\partial s}(t,s;\lambda,k)-\mu s^{-1} y_1(t,s;\lambda,k,\mu) \right) \\ & \quad +\mu(\mu-1) s^{-2} y_1(t,s;\lambda,k,\mu) \\
&= \lambda^2 s^{-2k} y_1(t,s;\lambda,k,\mu) +\mu s^{-1} \frac{\partial y_1}{\partial s}(t,s;\lambda,k)  -\mu s^{-2} y_1(t,s;\lambda,k,\mu).
\end{align*} Thus, we proved \eqref{y1 adjoiunt equation} too. This completes the proof.
\end{proof}

\begin{proposition} \label{Proposition integral relation test function} Let $u_0\in H^1(\mathbb{R}^n)$ and $u_1\in L^2(\mathbb{R}^n)$ be functions such that $\mathrm{supp}\, u_j \subset B_R$ for $j=0,1$ and for some $R>0$ and let $\lambda>0$ be a parameter. Let $u$ be a local in time energy solution to \eqref{Semi EdeS k damped} on $[1,T)$ according to Definition \ref{Def energy sol}. Then, the following integral identity is satisfied for any $t\in [1,T)$
\begin{align} 
\int_{\mathbb{R}^n} u(t,x) \varphi_\lambda (x) \, \mathrm{d}x & = \varepsilon \, y_0(t,1;\lambda,k) \int_{\mathbb{R}^n} u_0(x) \varphi_\lambda(x) \, \mathrm{d}x + \varepsilon \, y_1(t,1;\lambda,k) \int_{\mathbb{R}^n} u_1(x) \varphi_\lambda(x) \, \mathrm{d}x \notag \\ & \quad +\int_1^t  y_1(t,s;\lambda,k)  \int_{\mathbb{R}^n} |u(s,x)|^p \varphi_\lambda (x) \, \mathrm{d}x \, \mathrm{d}s, \label{fundametal integral equality}
\end{align} where $\varphi_\lambda(x)\doteq \varphi(\lambda x)$ and $\varphi$ is defined by \eqref{def Yordanov-Zhang function}.
\end{proposition}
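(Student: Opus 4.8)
The plan is to recognize \eqref{fundametal integral equality} as the variation--of--constants (Duhamel) representation of the ordinary differential equation satisfied by the weighted space average $U(t)\doteq\int_{\mathbb{R}^n}u(t,x)\varphi_\lambda(x)\,\mathrm{d}x$. Since $\varphi$ satisfies $\Delta\varphi=\varphi$ (cf. \eqref{def Yordanov-Zhang function}), the rescaled function $\varphi_\lambda(x)=\varphi(\lambda x)$ is an eigenfunction of the Laplacian with $\Delta\varphi_\lambda=\lambda^2\varphi_\lambda$. Testing \eqref{Semi EdeS k damped} formally against $\varphi_\lambda$ and integrating by parts in $x$ turns $-t^{-2k}\Delta$ into multiplication by $-\lambda^2 t^{-2k}$, so that $U$ solves the inhomogeneous version of \eqref{equation y}, namely $U''-\lambda^2 t^{-2k}U+\mu t^{-1}U'=\int_{\mathbb{R}^n}|u|^p\varphi_\lambda\,\mathrm{d}x$, with Cauchy data $U(1)=\varepsilon\int u_0\varphi_\lambda$ and $U'(1)=\varepsilon\int u_1\varphi_\lambda$. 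The functions $y_0,y_1$ built in Proposition \ref{Proposition representations y0 and y1} are exactly the propagators of this ODE, and \eqref{fundametal integral equality} is its Duhamel formula.

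To make this rigorous at the level of energy solutions, I would fix the final time $t\in[1,T)$ and insert the test function $\psi(s,x)\doteq y_1(t,s;\lambda,k,\mu)\,\varphi_\lambda(x)$ into the twice--integrated weak identity \eqref{integral identity  weak sol}. The decisive simplification is that the whole double--integral term drops out: since $\Delta\psi=\lambda^2 y_1\varphi_\lambda$ while the $s$--dependent factor obeys the adjoint equation \eqref{y1 adjoiunt equation}, the operator $\psi_{ss}-s^{-2k}\Delta\psi-\mu s^{-1}\psi_s+\mu s^{-2}\psi$ annihilates $\psi$ identically. It then remains to evaluate the boundary contributions using the Cauchy data in \eqref{CP yj(t,s;lambda,k)} and the identity \eqref{dy1/ds= -y0}. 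At $s=t$ one has $y_1(t,t;\lambda,k,\mu)=0$ and, by \eqref{dy1/ds= -y0} together with $y_0(t,t;\lambda,k,\mu)=1$, also $\partial_s y_1(t,t;\lambda,k,\mu)=-1$; hence the three terms evaluated at the upper limit collapse to precisely $U(t)$. At $s=1$ the three terms carrying the data combine into $\varepsilon\, y_1(t,1;\lambda,k,\mu)\int u_1\varphi_\lambda$ minus $\varepsilon\bigl(\partial_s y_1(t,1;\lambda,k,\mu)-\mu\,y_1(t,1;\lambda,k,\mu)\bigr)\int u_0\varphi_\lambda$, and a second application of \eqref{dy1/ds= -y0} at $s=1$ rewrites the bracket as $-y_0(t,1;\lambda,k,\mu)$. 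Collecting the surviving terms, with the right--hand side reducing to $\int_1^t y_1(t,s;\lambda,k,\mu)\int_{\mathbb{R}^n}|u(s,x)|^p\varphi_\lambda(x)\,\mathrm{d}x\,\mathrm{d}s$, yields exactly \eqref{fundametal integral equality}.

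The one point that genuinely needs care is that $\psi$ is not compactly supported, because $\varphi_\lambda$ grows exponentially, so \eqref{integral identity  weak sol} cannot be invoked verbatim. This is precisely the situation covered by Remark \ref{Remark support}: the finite--speed--of--propagation condition $\mathrm{supp}\,u(s,\cdot)\subset B_{R+A_k(s)}$ confines every $x$--integral to a compact set for each $s\in[1,t]$, so one may legitimately use the smooth but non--compactly--supported $\psi$ (equivalently, multiply $\psi$ by a spatial cut--off equal to $1$ on the backward light cone over $[1,t]$ and let the cut--off escape to infinity, the error terms vanishing by the support condition). I expect this admissibility justification, rather than the bookkeeping of the boundary terms, to be the only real obstacle; once $\psi$ is accepted as a test function, the identity follows from the adjoint property \eqref{y1 adjoiunt equation} and the normalisation of $y_0,y_1$ described above.
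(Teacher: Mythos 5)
Your proposal is correct and is essentially identical to the paper's own proof: the paper likewise inserts the test function $\psi(s,x)=y_1(t,s;\lambda,k,\mu)\,\varphi_\lambda(x)$ into the integral relation \eqref{integral identity  weak sol}, annihilates the double-integral term via the adjoint equation \eqref{y1 adjoiunt equation} combined with $\Delta\varphi_\lambda=\lambda^2\varphi_\lambda$, evaluates the boundary contributions at $s=t$ and $s=1$ using \eqref{dy1/ds= -y0} together with the initial conditions in \eqref{CP yj(t,s;lambda,k)}, and justifies the non-compactly-supported test function by Remark \ref{Remark support}. Your bookkeeping of the boundary terms (including the sign manipulations producing $-y_0(t,1;\lambda,k,\mu)$ at the lower limit) matches the paper's computation exactly, so there is nothing to add.
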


\begin{proof} Assuming $u_0,u_1$ compactly supported, we can consider a test function $\psi\in \mathcal{C}^\infty([1,T)\times\mathbb{R}^n)$ in Definition \ref{Def energy sol} according to Remark \ref{Remark support}. Hence, we take $\psi(s,x)=y_1(t,s;\lambda,k,\mu)\varphi_\lambda(x)$ (here $t,\lambda$ can be treated as fixed parameters). Consequently, $\psi$ satisfies
\begin{align*}
\psi(t,x) &=y_1(t,t;\lambda,k,\mu) \varphi_\lambda(x)=0, \qquad   
\psi(1,x) =y_1(t,1;\lambda,k,\mu) \varphi_\lambda(x), \\
\psi_s(t,x) &= \partial_s y_1(t,t;\lambda,k,\mu) \varphi_\lambda(x) =\left(\mu t^{-1}  y_1(t,t;\lambda,k,\mu)-  y_0(t,t;\lambda,k,\mu)\right) \varphi_\lambda(x) =- \varphi_\lambda(x), \\
 \psi_s(1,x) &=  \partial_s y_1(t,1;\lambda,k,\mu) \varphi_\lambda(x) =\left( \mu y_1(t,1;\lambda,k,\mu)-  y_0(t,1;\lambda,k,\mu)\right)\varphi_\lambda(x), 
\end{align*}  and
\begin{align*}
\psi_{ss}(s,x) -s^{-2k} \Delta \psi(s,x) - \mu\partial_s(s^{-1}  \psi(s,x)) =\left( \partial_s^2 -\lambda^2 s^{-2k} -\mu s^{-1}\partial_s+\mu s^{-2}\right)  y_1(t,s;\lambda,k,\mu)\varphi_\lambda(x)=0,
\end{align*}
where we used \eqref{dy1/ds= -y0}, \eqref{y1 adjoiunt equation} and the property $\Delta \varphi_\lambda=\lambda^2\varphi_\lambda$. 
Then, employing the above defined $\psi$ in \eqref{integral identity  weak sol}, we find immediately \eqref{fundametal integral equality}. This completes the proof.
\end{proof}

\begin{proposition} \label{Proposition lower bound estimates y0 and y1} Let $y_0$, $y_1$ be the functions defined in \eqref{def y0(t,s;lambda,k)} and \eqref{def y1(t,s;lambda,k)}, respectively. Then, the following estimates are satisfied for any $t\geqslant s\geqslant 1$
\begin{align}
& y_0(t,s;\lambda,k,\mu)\geqslant s^{\frac{\mu-k}{2}} t^{\frac{k-\mu}{2}} \cosh \big(\lambda (\phi_k(t)-\phi_k(s))\big)  \qquad \mbox{if} \ \mu\in[2-k,\infty), \label{lower bound estimate y0(t,s;lambda,k)} \\
& y_1(t,s;\lambda,k,\mu)\geqslant  s^{\frac{\mu+k}{2}} t^{\frac{k-\mu}{2}} \, \frac{\sinh \big(\lambda (\phi_k(t)-\phi_k(s))\big) }{\lambda}  \qquad \mbox{if} \  \mu\in[0, k]\cup[2-k,\infty).\label{lower bound estimate y_1(t,s;lambda,k)}
\end{align} 
\end{proposition}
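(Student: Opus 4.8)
The plan is to reduce both inequalities to pure comparison estimates for products of modified Bessel functions, and then to prove those by a Sturm-type comparison in the variable $b\doteq\lambda\phi_k(t)$, writing also $a\doteq\lambda\phi_k(s)$ so that $a\leq b$. Collecting the prefactors in \eqref{def y0(t,s;lambda,k)} and \eqref{def y1(t,s;lambda,k)} (and using $\lambda\phi_k(s)=a$ together with $\sqrt{ab}=\lambda(st)^{(1-k)/2}/(1-k)$), the two claims \eqref{lower bound estimate y0(t,s;lambda,k)} and \eqref{lower bound estimate y_1(t,s;lambda,k)} become
\[
G_\nu(a,b)\doteq\mathrm{K}_\nu(a)\mathrm{I}_\nu(b)-\mathrm{I}_\nu(a)\mathrm{K}_\nu(b)\geq\frac{\sinh(b-a)}{\sqrt{ab}},\qquad \widetilde G_\nu(a,b)\doteq\mathrm{I}_{\nu-1}(a)\mathrm{K}_\nu(b)+\mathrm{K}_{\nu-1}(a)\mathrm{I}_\nu(b)\geq\frac{\cosh(b-a)}{\sqrt{ab}},
\]
the first to be shown for $|\nu|\geq\tfrac12$ (i.e. $\mu\in[0,k]\cup[2-k,\infty)$) and the second for $\nu\leq-\tfrac12$ (i.e. $\mu\geq 2-k$).

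As functions of $b$, both $G_\nu(a,\cdot)$ and $\widetilde G_\nu(a,\cdot)$ are linear combinations of $\mathrm{I}_\nu(b)$ and $\mathrm{K}_\nu(b)$, hence they solve the order-$\nu$ modified Bessel equation $L_\nu w\doteq w''+b^{-1}w'-(1+\nu^2 b^{-2})w=0$. Using $\mathcal W(\mathrm{I}_\nu,\mathrm{K}_\nu)(z)=-1/z$, the cross-product identity $\mathrm{I}_{\nu-1}(a)\mathrm{K}_\nu(a)+\mathrm{K}_{\nu-1}(a)\mathrm{I}_\nu(a)=1/a$, and the recurrences for $\mathrm{I}_\nu',\mathrm{K}_\nu'$ already recalled in the proof of Proposition \ref{Proposition representations y0 and y1}, I would compute the Cauchy data at $b=a$: namely $G_\nu(a,a)=0$, $\partial_b G_\nu(a,a)=1/a$, and $\widetilde G_\nu(a,a)=1/a$, $\partial_b\widetilde G_\nu(a,a)=-\nu/a^2$. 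The comparison functions $g(b)\doteq\sinh(b-a)/\sqrt{ab}$ and $m(b)\doteq\cosh(b-a)/\sqrt{ab}$ are combinations of $b^{-1/2}\sinh b$ and $b^{-1/2}\cosh b$, hence solve $L_{1/2}w=0$; moreover $g$ matches the data of $G_\nu$ at $b=a$ to first order, while $m(a)=\widetilde G_\nu(a,a)=1/a$.

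The core of the argument is then a comparison. For $H\doteq G_\nu(a,\cdot)-g$ (resp. $H\doteq\widetilde G_\nu(a,\cdot)-m$), subtracting the order-$\nu$ and order-$\tfrac12$ equations leaves only the difference of the zero-order coefficients, so that $L_\nu H=\tfrac{\nu^2-1/4}{b^2}\,g$ (resp. $\tfrac{\nu^2-1/4}{b^2}\,m$), which is nonnegative on $[a,\infty)$ since $\nu^2\geq\tfrac14$ under our hypotheses and $g,m\geq0$ there. Solving this forced equation for the forward Cauchy problem of $L_\nu$ by variation of parameters — where the self-adjoint coefficient gives $p(b)\mathcal W=b\cdot(-1/b)=-1$ — yields the representation $H(b)=a\,H'(a)\,G_\nu(a,b)+\int_a^b G_\nu(s,b)\,s\,(L_\nu H)(s)\,\mathrm ds$. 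For the $y_1$ comparison one has $H'(a)=0$, while for the $y_0$ comparison $H'(a)=(\tfrac12-\nu)/a^2>0$ because $\nu\leq-\tfrac12$; in both cases the leading term is nonnegative.

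Thus everything reduces to the nonnegativity of the kernel $G_\nu(s,b)$ for $b\geq s$. The decisive observation here is that $G_\nu$ is \emph{even} in $\nu$: combining $\mathrm{K}_{-\nu}=\mathrm{K}_\nu$ with $\mathrm{I}_{-\nu}=\mathrm{I}_\nu+\tfrac2\pi\sin(\nu\pi)\mathrm{K}_\nu$ makes the $\mathrm{K}_\nu\mathrm{K}_\nu$ contributions cancel, so $G_\nu=G_{|\nu|}$ with $|\nu|\geq\tfrac12$. For such positive orders $\mathrm{I}_{|\nu|},\mathrm{K}_{|\nu|}>0$ and $\mathrm{K}_{|\nu|}/\mathrm{I}_{|\nu|}$ is strictly decreasing (its derivative equals $\mathcal W(\mathrm{I}_{|\nu|},\mathrm{K}_{|\nu|})/\mathrm{I}_{|\nu|}^2<0$), whence $G_\nu(s,b)=G_{|\nu|}(s,b)\geq0$ for $b\geq s$. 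Therefore $H\geq0$, and restoring the prefactors gives \eqref{lower bound estimate y0(t,s;lambda,k)} and \eqref{lower bound estimate y_1(t,s;lambda,k)}. I expect the main obstacle to be exactly this kernel positivity at negative orders: it forces the use of the evenness of $G_\nu$ (a symmetry that $\widetilde G_\nu$ itself lacks), and, for $y_0$, it requires the careful evaluation of the Cauchy data $\widetilde G_\nu(a,a)=1/a$, $\partial_b\widetilde G_\nu(a,a)=-\nu/a^2$ through the cross-product Wronskian identity and the Bessel recurrences.
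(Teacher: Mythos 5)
Your proposal is correct, and it takes a genuinely different route from the paper's proof. The paper never leaves the original time variable: it formulates an abstract minimum principle for the Cauchy problem \eqref{CP y} (proved via the equivalent integral equation and continuous dependence on the data) and then verifies by direct differentiation that the hyperbolic candidates are subsolutions; there the computation produces the factor $\tfrac{k-\mu}{2}\left(\tfrac{k+\mu}{2}-1\right)$, whose sign forces $\mu\notin(k,2-k)$, while for $y_0$ the extra requirement $\partial_t w_0(s,s;\lambda,k,\mu)\leqslant 0$ forces $\mu\geqslant k$, together yielding $\mu\geqslant 2-k$. You instead pass to the Bessel variable $b=\lambda\phi_k(t)$ and solve for the deficit $H$ explicitly by variation of parameters with Green kernel $s\,G_\nu(s,b)$, normalized by $\mathcal{W}(\mathrm{I}_\nu,\mathrm{K}_\nu)(z)=-1/z$. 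I checked your reductions and all the Cauchy data: the prefactor bookkeeping via $\sqrt{ab}=\lambda(st)^{(1-k)/2}/(1-k)$ is right (note only a cosmetic slip: your displayed pair lists the $y_1$-inequality first although you cite \eqref{lower bound estimate y0(t,s;lambda,k)} first); $G_\nu(a,a)=0$ and $\partial_b G_\nu(a,a)=1/a$ follow from the Wronskian, $\widetilde G_\nu(a,a)=1/a$ from the cross-product identity $\mathrm{I}_{\nu-1}(a)\mathrm{K}_\nu(a)+\mathrm{K}_{\nu-1}(a)\mathrm{I}_\nu(a)=1/a$, and $\partial_b\widetilde G_\nu(a,a)=-\nu/a^2$ from the recurrences, so that $H'(a)=0$ in the $y_1$ case and $H'(a)=(\tfrac12-\nu)/a^2>0$ for $\nu\leqslant-\tfrac12$ in the $y_0$ case; the forcing $\tfrac{\nu^2-1/4}{b^2}\,g$ (resp.\ $m$) is nonnegative exactly when $\nu^2\geqslant\tfrac14$, which is the same condition $\mu\notin(k,2-k)$ as in the paper; and the kernel positivity at negative orders is correctly rescued by the evenness $G_{-\nu}=G_\nu$ (the $\mathrm{K}_\nu\mathrm{K}_\nu$ terms cancel through $\mathrm{I}_{-\nu}=\mathrm{I}_\nu+\tfrac{2}{\pi}\sin(\nu\pi)\mathrm{K}_\nu$, with integer orders covered by continuity in $\nu$) combined with the strict decrease of $\mathrm{K}_{|\nu|}/\mathrm{I}_{|\nu|}$. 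In effect your Duhamel representation is an explicit, quantitative proof of the very minimum principle the paper invokes abstractly: the paper's route buys brevity and self-containedness (no special-function facts beyond those already used in Proposition \ref{Proposition representations y0 and y1}), whereas your route buys an exact formula for the discrepancy between $y_j$ and its comparison function, making transparent where each restriction on $\mu$ originates and isolating the one genuinely delicate point --- kernel positivity at negative order --- as the symmetry $G_\nu=G_{|\nu|}$.
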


\begin{proof} The proof of the inequalities \eqref{lower bound estimate y0(t,s;lambda,k)} and \eqref{lower bound estimate y_1(t,s;lambda,k)} is based on the following minimum type principle: \\
\emph{ let $w=w(t,s;\lambda,k,\mu)$ be a solution of the Cauchy problem }
\begin{align}\label{CP y}
\begin{cases} \partial_t^2 w -\lambda^2 t^{-2k} w+\mu\,  t^{-1} \partial_t w=h, &  \mbox{for} \ t>s \geqslant 1, \\  w(s)=
w_0, \ \partial_t w(s)=w_1, \end{cases}
\end{align} \emph{where $h=h(t,s;\lambda,k,\mu)$ is a continuous function; if $h\geqslant 0$ and $w_0=w_1=0$ (i.e. $w$ is a \emph{supersolution} of the homogeneous problem with trivial initial conditions), then, $w(t,s;\lambda,k,\mu)\geqslant 0$ for any $t>s$}.

 In order to prove this minimum principle, we apply the continuous dependence on initial conditions (note that for $t\geqslant 1$ the functions $t^{-2k}$ and $\mu t^{-1}$ are smooth). Indeed, if we denote by $w_\epsilon$ the solution to \eqref{CP y} with $w_0=\epsilon>0$ and $w_1=0$, then, $w_\epsilon$ solves the integral equation
\begin{align*}
w_\epsilon(t,s;\lambda,k,\mu) = \epsilon +\int_s^t \tau^{-\mu}\int_s^\tau \sigma^\mu\big(\lambda^2\sigma^{-2k} w_\epsilon(\sigma,s;\lambda,k,\mu)+h(\sigma,s;\lambda,k,\mu)\big) \mathrm{d}\sigma\, \mathrm{d}\tau.
\end{align*} By contradiction, one can prove easily that $w_\epsilon(t,s;\lambda,k,\mu)>0$ for any $t>s$. Hence, by the continuous dependence on initial data, letting $\epsilon\to 0$, we find that  $w(t,s;\lambda,k,\mu)\geqslant 0$ for any $t>s$.


Let us prove the validity of \eqref{lower bound estimate y_1(t,s;lambda,k)}. Denoting by $w_1=w_1(t,s;\lambda,k,\mu)$ the function on the right -- hand side of \eqref{lower bound estimate y_1(t,s;lambda,k)}, we find immediately $w_1(s,s;\lambda,k,\mu)=0$ and $\partial_t w_1(s,s;\lambda,k,\mu)=1$. Moreover,
\begin{align*}
\partial_t^2 w_1(t,s;\lambda,k,\mu) &= \lambda^{-1} s^{\frac{k+\mu}{2}}  t^{\frac{k-\mu}{2}}  \Big[ \tfrac{k-\mu}{2} \left(\tfrac{ k-\mu}{2} -1\right) t^{-2} \sinh \big(\lambda (\phi_k(t)-\phi_k(s))\big)\\
 & \ \quad \phantom{ \lambda^{-1} s^{\frac{k+\mu}{2}}  t^{\frac{k-\mu}{2}}  \Big[ }+ (k-\mu) \, t^{-1 } \cosh \big(\lambda (\phi_k(t)-\phi_k(s))\big) \lambda \phi'_k(t) \\ 
  &  \ \quad \phantom{ \lambda^{-1} s^{\frac{k+\mu}{2}}  t^{\frac{k-\mu}{2}}  \Big[ } + \sinh \big(\lambda (\phi_k(t)-\phi_k(s))\big) (\lambda \phi'_k(t))^2+\cosh \big(\lambda (\phi_k(t)-\phi_k(s))\big) \lambda \phi''_k(t)  \Big] \\
  &= \left[\tfrac{k-\mu}{2} \left(\tfrac{ k-\mu}{2} -1\right) t^{-2}  +\lambda^2 t^{-2k} \right] w_1(t,s;\lambda,k,\mu) - \mu  s^{\frac{k+\mu}{2}}  t^{-1-\frac{k+\mu}{2}} \cosh \big(\lambda (\phi_k(t)-\phi_k(s))\big)  
\end{align*} and
\begin{align*}
\partial_t w_1(t,s;\lambda,k,\mu) &= \lambda^{-1} s^{\frac{k+\mu}{2}}  t^{\frac{k-\mu}{2}}  \Big[ \tfrac{k-\mu}{2} \,  t^{-1} \sinh \big(\lambda (\phi_k(t)-\phi_k(s))\big) + \lambda t^{-k} \cosh \big(\lambda (\phi_k(t)-\phi_k(s))\big)  \Big] \\
&=  \tfrac{k-\mu}{2} \,  t^{-1} w_1(t,s;\lambda,k,\mu)  + s^{\frac{k+\mu}{2}}  t^{-\frac{k+\mu}{2}} \cosh \big(\lambda (\phi_k(t)-\phi_k(s))\big)  
\end{align*} imply that
\begin{align*}
 \partial_t^2 w_1(t,s;\lambda,k,\mu) - \lambda^2 t^{-2k}  w_1(t,s;\lambda,k,\mu)+\mu \, t^{-1} \partial_t w_1(t,s;\lambda,k,\mu)  = \tfrac{k-\mu}{2} \left(\tfrac{ k+\mu}{2}-1\right) w_1(t,s;\lambda,k,\mu) \leqslant 0 ,
\end{align*} where in the last step we employ the assumption $\mu\notin (k,2-k)$ to guarantee that the multiplicative constant is negative.
Therefore, $y_1-w_1$ is a supersolution of \eqref{CP y} with $h=0$ and $w_0=w_1=0$. Thus, applying the minimum principle we have that $(y_1-w_1)(t,s;\lambda,k)\geqslant 0$  for any $t>s$, that is, we showed \eqref{lower bound estimate y_1(t,s;lambda,k)}.

 In a completely analogous way, one can prove \eqref{lower bound estimate y0(t,s;lambda,k)}, repeating the previous argument based on the minimum principle with  $w_0(t,s;\lambda,k,\mu)\doteq s^{\frac{\mu-k}{2}} t^{\frac{k-\mu}{2}}   \cosh \big(\lambda (\phi_k(t)-\phi_k(s))\big)$ in place of $w_1(t,s;\lambda,k,\mu)$ and $y_0$ in place of $y_1$, respectively. However, in order to guarantee that $w_0(s,s;\lambda,k,\mu)=1$ and $\partial_t w_0(s,s;\lambda,k,\mu)\leqslant 0$, we are forced to require $\mu\geqslant k$, which provides, together with the condition $\mu\notin (k,2-k)$ that is necessary to ensure that $w_0$ is actually a subsolution of the homogeneous equation, the range for $\mu$ in \eqref{lower bound estimate y0(t,s;lambda,k)}.
\end{proof}

\begin{remark} \label{Remark transf v}
Although \eqref{lower bound estimate y0(t,s;lambda,k)} might be restrictive from the viewpoint of the range for $\mu$ in the statement of Theorem \ref{Theorem critical case p0}, we can actually overcome this difficulty by showing a transformation which allows to link the case $\mu\in [0,k]$ to the case $\mu \in [2-k,2]$, when a lower bound estimate for $y_0$ is available. Indeed, if we perform the transformation  $v=v(t,x) \doteq t^{\mu-1} u(t,x)$, then, $u$ is a solution to \eqref{Semi EdeS k damped} if and only if $v$ solves 
\begin{align}\label{Semi EdeS k damped transf v} 
\begin{cases}  v_{tt} - t^{-2k}\Delta v+(2-\mu) \, t^{-1} v_t=t^{(1-\mu)(p-1)} |v|^p & x\in \mathbb{R}^n, \ t\in (1,T), \\
v(1,x)= \varepsilon u_0(x) & x\in \mathbb{R}^n, \\
 u_t(1,x)= \varepsilon u_1(x)+ \varepsilon (1-\mu) u_0(x) & x\in \mathbb{R}^n.
\end{cases}
\end{align} Let us point out that in \eqref{Semi EdeS k damped transf v} a time -- dependent factor which decays with polynomial order appears in the nonlinear term on the right -- hand side. Therefore, we will reduce the case $\mu\in [0,k]$ to the case $\mu\geqslant 2-k$, up to the time -- dependent factor $t^{(1-\mu)(p-1)} $ in the nonlinearity.
\end{remark}

We can introduce now for $t\geqslant s \geqslant 1$ and $x\in\mathbb{R}^n$ the definition of the following \emph{auxiliary function}
\begin{align}
\xi_q(t,s,x;k,\mu) &\doteq \int_0^{\lambda_0} \mathrm{e}^{-\lambda (A_k(t)+R)} y_0(t,s;\lambda,k,\mu) \, \varphi_\lambda(x) \lambda^q \,\mathrm{d}\lambda \label{def xi q}, \\
\eta_q(t,s,x;k,\mu) & \doteq \int_0^{\lambda_0} \mathrm{e}^{-\lambda (A_k(t)+R)} \frac{y_1(t,s;\lambda,k,\mu)}{ \phi_k(t)-\phi_k(s) }\, \varphi_\lambda(x) \lambda^q \,\mathrm{d}\lambda\label{def eta q},
\end{align}  where $q>-1$, $\lambda_0>0$ is a fixed parameter and $A_k$ is defined by \eqref{def A k}.

Combining Proposition \ref{Proposition integral relation test function} and \eqref{def xi q} and \eqref{def eta q}, we establish a fundamental equality, whose role will be crucial in the next sections in order to prove the blow -- up result.

\begin{corollary} \label{Corollary fund ineq}  Let $u_0\in H^1(\mathbb{R}^n)$ and $u_1\in L^2(\mathbb{R}^n)$ such that $\mathrm{supp}\, u_j \subset B_R$ for $j=0,1$ and for some $R>0$. Let $u$ be a local in time energy solution to \eqref{Semi EdeS k damped} on $[1,T)$ according to Definition \ref{Def energy sol}. Let $q>-1$ and let $\xi_q(t,s,x;k),\eta_q(t,s,x;k)$ be the functions defined by \eqref{def xi q} and \eqref{def eta q}, respectively. Then,
\begin{align}
\int_{\mathbb{R}^n} u(t,x) \, \xi_q(t,t,x;k,\mu) \, \mathrm{d}x & = \varepsilon \int_{\mathbb{R}^n} u_0(x) \,  \xi_q(t,1,x;k,\mu)  \, \mathrm{d}x 
+ \varepsilon \,  (\phi_k(t)-\phi_k(1)) \int_{\mathbb{R}^n} u_1(x) \, \eta_q(t,s,x;k,\mu)  \, \mathrm{d}x \notag \\ & \quad +\int_1^t  (\phi_k(t)-\phi_k(s))   \int_{\mathbb{R}^n} |u(s,x)|^p \eta_q(t,s,x;k,\mu) \, \mathrm{d}x \, \mathrm{d}s \label{fundamental inequality functional mathcalU}
\end{align} for any $t\in [1,T)$.
\end{corollary}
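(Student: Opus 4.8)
The plan is to feed the pointwise-in-$\lambda$ identity \eqref{fundametal integral equality} of Proposition \ref{Proposition integral relation test function} into the definitions \eqref{def xi q} and \eqref{def eta q} by multiplying it against the weight $\mathrm{e}^{-\lambda(A_k(t)+R)}\lambda^q$, integrating over $\lambda\in(0,\lambda_0)$, and interchanging the order of integration. The starting observation is that the initial condition $y_0(s,s;\lambda,k,\mu)=\delta_{00}=1$ in \eqref{CP yj(t,s;lambda,k)} gives $y_0(t,t;\lambda,k,\mu)=1$, so that \eqref{def xi q} evaluated at $s=t$ reduces to $\xi_q(t,t,x;k,\mu)=\int_0^{\lambda_0}\mathrm{e}^{-\lambda(A_k(t)+R)}\varphi_\lambda(x)\lambda^q\,\mathrm{d}\lambda$. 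Hence, by Fubini's theorem,
\begin{align*}
\int_{\mathbb{R}^n} u(t,x)\,\xi_q(t,t,x;k,\mu)\,\mathrm{d}x = \int_0^{\lambda_0}\mathrm{e}^{-\lambda(A_k(t)+R)}\lambda^q\left(\int_{\mathbb{R}^n} u(t,x)\varphi_\lambda(x)\,\mathrm{d}x\right)\mathrm{d}\lambda,
\end{align*}
and the inner space average is then replaced by the right-hand side of \eqref{fundametal integral equality}.

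Next I would treat the three resulting terms separately, in each case swapping the $\lambda$-integral back inside the $x$-integral (and, for the nonlinear term, also the $s$-integral). The $u_0$-term directly reproduces $\varepsilon\int_{\mathbb{R}^n}u_0(x)\,\xi_q(t,1,x;k,\mu)\,\mathrm{d}x$ upon recognizing \eqref{def xi q} with $s=1$. For the $u_1$- and the nonlinear terms the kernel that appears is $y_1$ itself rather than $y_1/(\phi_k(t)-\phi_k(s))$, so I would insert the compensating factors $\phi_k(t)-\phi_k(1)$ and $\phi_k(t)-\phi_k(s)$ respectively, which are exactly the normalizations built into \eqref{def eta q}; this turns the two terms into $\varepsilon(\phi_k(t)-\phi_k(1))\int_{\mathbb{R}^n}u_1(x)\,\eta_q(t,1,x;k,\mu)\,\mathrm{d}x$ and $\int_1^t(\phi_k(t)-\phi_k(s))\int_{\mathbb{R}^n}|u(s,x)|^p\eta_q(t,s,x;k,\mu)\,\mathrm{d}x\,\mathrm{d}s$, which are precisely the remaining two terms of \eqref{fundamental inequality functional mathcalU}.

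The only genuine point to check is the legitimacy of the two applications of Fubini's theorem. Since $\lambda$ ranges over the bounded interval $(0,\lambda_0)$ and $q>-1$, the weight $\mathrm{e}^{-\lambda(A_k(t)+R)}\lambda^q$ is integrable there; the kernels $y_0$ and $y_1$ are continuous in all their arguments (being built from the modified Bessel functions $\mathrm{I}_\nu,\mathrm{K}_\nu$ via \eqref{def y0(t,s;lambda,k)}--\eqref{def y1(t,s;lambda,k)}), hence bounded on the relevant compact $\lambda$-range; and $\varphi_\lambda(x)=\varphi(\lambda x)$ is bounded on compact $x$-sets. Because $u_0,u_1$ and, through the finite-speed-of-propagation support condition $\mathrm{supp}\,u(s,\cdot)\subset B_{A_k(s)+R}$, also $u(t,\cdot)$ and $|u(s,\cdot)|^p\in L^1_{\loc}$ are compactly supported, every integrand is absolutely integrable over the product domain, so the order of integration may be exchanged freely. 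This is the main (and only mildly delicate) obstacle; once it is dispensed with, the identity follows by the purely algebraic bookkeeping of the three terms described above.
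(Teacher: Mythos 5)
Your proof is correct and follows essentially the same route as the paper, whose entire argument is the one-line observation that multiplying \eqref{fundametal integral equality} by $\mathrm{e}^{-\lambda(A_k(t)+R)}\lambda^q$, integrating over $\lambda\in[0,\lambda_0]$ and applying Fubini's theorem yields \eqref{fundamental inequality functional mathcalU}. Your additional bookkeeping — using $y_1(t,t;\lambda,k,\mu)=0$ together with $y_0(t,t;\lambda,k,\mu)=1$ to identify $\xi_q(t,t,x;k,\mu)$, inserting the normalizing factors $\phi_k(t)-\phi_k(s)$ demanded by \eqref{def eta q}, and justifying Fubini via $q>-1$, continuity of the Bessel-type kernels on the compact $\lambda$-range, and the compact supports — is exactly the routine verification the paper leaves implicit.
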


\begin{proof}
 Multiplying both sides of \eqref{fundametal integral equality}  by $\mathrm{e}^{-\lambda (A_k(t)+R)}\lambda^q$, integrating with respect to $\lambda$ over $[0,\lambda_0]$ and applying Fubini's theorem, we get easily \eqref{fundamental inequality functional mathcalU}.
\end{proof}

\subsection{Properties of the auxiliary functions} \label{Subsection estimates auxiliary functions}

In this section, we establish lower and upper bound estimates for the auxiliary functions $\xi_q,\eta_q$ under suitable assumptions on $q$. In the lower bound estimates, we may restrict our considerations to the case $\mu\geqslant 2-k$ thanks to Remark \ref{Remark transf v}, even though the  estimate for $\eta_q$  that will be proved thanks to \eqref{lower bound estimate y_1(t,s;lambda,k)} clearly would be true also for $\mu\in [0,k]$.

\begin{lemma} \label{Lemma lower bound estimates xi q and eta q} Let $n\geqslant 1$, $k\in[0,1)$, $\mu\geqslant 2-k$ and $\lambda_0>0$. If we assume  $q>-1$, then, for $t\geqslant s\geqslant 1$ and $|x|\leqslant A_k(s) +R$ the following lower bound estimates are satisfied:
\begin{align}
\xi_q (t,s,x;k,\mu) &\geqslant B_0 s^{\frac{\mu-k}{2}} t^{\frac{k-\mu}{2}} \langle A_k(s)\rangle ^{-q-1}; \label{lower bound xi q}\\
\eta_q (t,s,x;k,\mu) & \geqslant B_1 s^{\frac{\mu+k}{2}} t^{\frac{k-\mu}{2}}  \langle A_k(t)\rangle ^{-1}\langle A_k(s)\rangle ^{-q}. \label{lower bound eta q}
\end{align}
Here $B_0,B_1$ are positive constants depending only on $\lambda_0,q,R,k$ and we employ the notation $\langle y\rangle \doteq 3+|s|$.
\end{lemma}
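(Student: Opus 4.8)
The plan is to reduce both bounds to two elementary ingredients and then to exploit the exponential weight $\mathrm{e}^{-\lambda(A_k(t)+R)}$ to cancel the exponentially growing factors coming from Proposition \ref{Proposition lower bound estimates y0 and y1}. Since $\mu\geqslant 2-k$, both lower bounds \eqref{lower bound estimate y0(t,s;lambda,k)} and \eqref{lower bound estimate y_1(t,s;lambda,k)} are available. First, I would record that $\varphi$ attains its positive minimum at the origin: by Jensen's inequality applied to $\mathrm{e}^{(\cdot)}$ together with $\int_{\mathbb{S}^{n-1}}\omega\,\mathrm{d}\sigma_\omega=0$, one has $\varphi_\lambda(x)=\varphi(\lambda x)\geqslant c_0>0$ for every $x$ and $\lambda$; in particular the support restriction $|x|\leqslant A_k(s)+R$ is irrelevant for the lower bounds. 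Second, after the substitution $\sigma=\lambda\xi$ one obtains the elementary estimate
\[
\int_0^{\lambda_0}\mathrm{e}^{-\lambda\xi}\lambda^q\,\mathrm{d}\lambda\geqslant c\,\langle\xi\rangle^{-q-1}\qquad(\xi>0),
\]
valid for all $q>-1$ with $c=c(\lambda_0,q)>0$: for $\xi\geqslant 1/\lambda_0$ the rescaled integral dominates $\int_0^1\mathrm{e}^{-\sigma}\sigma^q\,\mathrm{d}\sigma$, while for $0<\xi\leqslant 1/\lambda_0$ the exponential is bounded below by $\mathrm{e}^{-1}$ and $\langle\xi\rangle$ is comparable to a constant. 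This inequality is the workhorse for both estimates.

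For \eqref{lower bound xi q} I would insert \eqref{lower bound estimate y0(t,s;lambda,k)} and $\varphi_\lambda\geqslant c_0$ into \eqref{def xi q}, bound $\cosh z\geqslant\tfrac12\mathrm{e}^{z}$, and use the decisive cancellation
\[
\mathrm{e}^{-\lambda(A_k(t)+R)}\,\mathrm{e}^{\lambda(\phi_k(t)-\phi_k(s))}=\mathrm{e}^{-\lambda(A_k(s)+R)},
\]
which holds because $\phi_k(t)-\phi_k(s)=A_k(t)-A_k(s)$. The remaining $\lambda$--integral is then controlled from below by the workhorse with $\xi=A_k(s)+R$, and since $\langle A_k(s)+R\rangle$ is comparable to $\langle A_k(s)\rangle$ (with constant depending on $R$), \eqref{lower bound xi q} follows after collecting the prefactor $s^{(\mu-k)/2}t^{(k-\mu)/2}$.

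For \eqref{lower bound eta q} the same substitutions, together with $\sinh(\lambda D)=\tfrac12(\mathrm{e}^{\lambda D}-\mathrm{e}^{-\lambda D})$ and the cancellation above, give, with $D\doteq\phi_k(t)-\phi_k(s)=A_k(t)-A_k(s)$,
\[
\eta_q(t,s,x;k,\mu)\geqslant \tfrac{c_0}{2}\,s^{\frac{\mu+k}{2}}\,t^{\frac{k-\mu}{2}}\,\frac1D\int_0^{\lambda_0}\big(\mathrm{e}^{-\lambda a}-\mathrm{e}^{-\lambda b}\big)\,\lambda^{q-1}\,\mathrm{d}\lambda,
\]
where $a\doteq A_k(s)+R$ and $b\doteq 2A_k(t)-A_k(s)+R=a+2D$. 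Writing $\mathrm{e}^{-\lambda a}-\mathrm{e}^{-\lambda b}=\int_a^b\lambda\,\mathrm{e}^{-\lambda\xi}\,\mathrm{d}\xi$ and interchanging the order of integration (Tonelli) recasts the $\lambda$--integral as $\int_a^b\big(\int_0^{\lambda_0}\mathrm{e}^{-\lambda\xi}\lambda^q\,\mathrm{d}\lambda\big)\,\mathrm{d}\xi$, to which the workhorse applies pointwise in $\xi$.

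The main obstacle is precisely this last step for $\eta_q$: one must extract the sharp product $\langle A_k(t)\rangle^{-1}\langle A_k(s)\rangle^{-q}$ and not the weaker $\langle A_k(t)\rangle^{-q-1}$ that arises if one bounds the integrand by its value at the far endpoint $\xi=b\sim 2A_k(t)$. I would therefore split according to the size of $D$. If $D\leqslant\langle A_k(s)\rangle$, then $\langle A_k(t)\rangle$ is comparable to $\langle A_k(s)\rangle$, the full interval $[a,b]$ satisfies $\langle\xi\rangle\lesssim\langle A_k(s)\rangle$, and the length $b-a=2D$ absorbs the prefactor $1/D$, leaving $\langle A_k(s)\rangle^{-q-1}$, which is comparable to the claimed bound. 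If instead $D>\langle A_k(s)\rangle$, I would integrate only over $[a,a+\langle A_k(s)\rangle]\subset[a,b]$, on which still $\langle\xi\rangle\lesssim\langle A_k(s)\rangle$, thereby gaining a factor $\langle A_k(s)\rangle$ and hence $\langle A_k(s)\rangle^{-q}$, and then use the trivial inequality $1/D\geqslant\langle A_k(t)\rangle^{-1}$ (valid since $\langle A_k(t)\rangle=3+A_k(s)+D\geqslant D$) to turn the surviving $1/D$ into $\langle A_k(t)\rangle^{-1}$. This scale separation is exactly what the one--sided bound misses; by contrast \eqref{lower bound xi q} is immediate once the exponential cancellation is noticed.
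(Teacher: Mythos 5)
Your proposal is correct: the Jensen bound $\varphi_\lambda(x)\geqslant c_0$, the cancellation $\mathrm{e}^{-\lambda(A_k(t)+R)}\mathrm{e}^{\lambda(\phi_k(t)-\phi_k(s))}=\mathrm{e}^{-\lambda(A_k(s)+R)}$ (which is just $A_k=\phi_k-\phi_k(1)$), the workhorse estimate $\int_0^{\lambda_0}\mathrm{e}^{-\lambda\xi}\lambda^q\,\mathrm{d}\lambda\gtrsim\langle\xi\rangle^{-q-1}$ for $q>-1$, the Tonelli recasting of $\mathrm{e}^{-\lambda a}-\mathrm{e}^{-\lambda b}$, and the dichotomy $D\lessgtr\langle A_k(s)\rangle$ with $1/D\geqslant\langle A_k(t)\rangle^{-1}$ in the far regime all check out, and together they yield exactly \eqref{lower bound xi q} and \eqref{lower bound eta q}.

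The paper reaches the same bounds by a different packaging, following Wakasa--Yordanov: instead of keeping the full $\lambda$-integral, it shrinks the integration to the $s$-dependent window $\big[\lambda_0/\langle A_k(s)\rangle,\,2\lambda_0/\langle A_k(s)\rangle\big]$, on which $\mathrm{e}^{-\lambda(A_k(s)+R)}\varphi_\lambda(x)\geqslant B$ uniformly for $|x|\leqslant A_k(s)+R$; the same cancellation turns the integrands into $\tfrac12\big(1+\mathrm{e}^{-2\lambda D}\big)\lambda^q$ and $\tfrac{1-\mathrm{e}^{-2\lambda D}}{2D}\,\lambda^{q-1}$, the explicit window integrals of $\lambda^q$ and $\lambda^{q-1}$ produce $\langle A_k(s)\rangle^{-q-1}$ and $\langle A_k(s)\rangle^{-q}$ directly, and the factor $\langle A_k(t)\rangle^{-1}$ is extracted by proving $\big(1-\mathrm{e}^{-2\lambda_0 D/\langle A_k(s)\rangle}\big)/D\gtrsim\langle A_k(t)\rangle^{-1}$ via the same two-scale split you use, there with threshold $\langle A_k(s)\rangle/(2\lambda_0)$ and the elementary inequality $1-\mathrm{e}^{-\sigma}\geqslant\sigma/2$ on $[0,1]$. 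So the decisive content is shared (the exponential cancellation plus comparing $D$ with $\langle A_k(s)\rangle$), but your route replaces the frequency-window trick by a Laplace-integral lower bound and a Frullani/Tonelli representation for $\eta_q$. Your version buys a few things: the restriction $|x|\leqslant A_k(s)+R$ becomes superfluous, since only $\varphi\geqslant c_0$ is used rather than \eqref{asymptotic YordanovZhang function}; the paper's parenthetical ``obvious modifications in the case $q=0$'' (coming from $\int\lambda^{q-1}\,\mathrm{d}\lambda$) disappears, because after Tonelli you only ever integrate $\lambda^q$ with $q>-1$; and the mechanism separating $\langle A_k(t)\rangle^{-1}$ from $\langle A_k(s)\rangle^{-q}$ is made explicit. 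The paper's windowing is shorter and reuses a template applied elsewhere in the literature. One cosmetic point common to both proofs: at $s=t$, i.e.\ $D=0$, the quotient in \eqref{def eta q} and your prefactor $1/D$ must be read as limits, and your Case 1 estimate passes to this limit just as the paper's bound does.
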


\begin{proof} We adapt the main ideas in the proof of Lemma 3.1 in \cite{WakYor18} to our model. Since 
\begin{align} \label{asymptotic YordanovZhang function}
\langle |x| \rangle^{-\frac{n-1}{2}}\mathrm{e}^{|x|}\lesssim \varphi(x) \lesssim\langle |x| \rangle^{-\frac{n-1}{2}}\mathrm{e}^{|x|} 
\end{align} holds for any $x\in \mathbb{R}^n$, there exists  a constant $B=B(\lambda_0,R,k)>0$ independent of $\lambda$ and $s$ such that
$$B\leqslant \inf_{\lambda \in \left[\frac{\lambda_0}{\langle A_k(s)\rangle}, \frac{2\lambda_0}{\langle A_k(s)\rangle}\right]} \inf_{|x|\leqslant A_k(s)+R} \mathrm{e}^{-\lambda(A_k(s)+R)}\varphi_\lambda(x).$$
Let us begin by proving \eqref{lower bound xi q}. Using the lower bound estimate in \eqref{lower bound estimate y0(t,s;lambda,k)}, shrinking the domain of integration in \eqref{def xi q} to $\left[\frac{\lambda_0}{\langle A_k(s)\rangle}, \frac{2\lambda_0}{\langle A_k(s)\rangle}\right]$ and applying the previous inequality, we arrive at
\begin{align*}
\xi_q (t,s,x;k,\mu) &\geqslant s^{\frac{\mu-k}{2}} t^{\frac{k-\mu}{2}} \int_{\lambda_0/\langle A_k(s)\rangle} ^{2\lambda_0/\langle A_k(s)\rangle} \mathrm{e}^{-\lambda (A_k(t)-A_k(s))} \cosh \big(\lambda (\phi_k(t)-\phi_k(s))\big) \mathrm{e}^{-\lambda (A_k(s)+R)}  \varphi_\lambda(x) \lambda^q \,\mathrm{d}\lambda \\
&\geqslant B s^{\frac{\mu-k}{2}} t^{\frac{k-\mu}{2}} \int_{\lambda_0/\langle A_k(s)\rangle} ^{2\lambda_0/\langle A_k(s)\rangle} \mathrm{e}^{-\lambda (A_k(t)-A_k(s))} \cosh \big(\lambda (\phi_k(t)-\phi_k(s))\big)  \lambda^q \,\mathrm{d}\lambda \\
& =  \tfrac{B}{2} s^{\frac{\mu-k}{2}} t^{\frac{k-\mu}{2}} \int_{\lambda_0/\langle A_k(s)\rangle} ^{2\lambda_0/\langle A_k(s)\rangle} \left(1+\mathrm{e}^{-2\lambda (\phi_k(t)-\phi_k(s))}\right)  \lambda^q \,\mathrm{d}\lambda \\ & \geqslant  \tfrac{B}{2} s^{\frac{\mu-k}{2}} t^{\frac{k-\mu}{2}} \int_{\lambda_0/\langle A_k(s)\rangle} ^{2\lambda_0/\langle A_k(s)\rangle} \lambda^q \,\mathrm{d}\lambda =  \frac{B(2^{q+1}-1) \lambda_0^{q+1}}{2(q+1)}  \langle A_k(s)\rangle^{-q-1}.
\end{align*}  Repeating similar steps as before, thanks to \eqref{lower bound estimate y_1(t,s;lambda,k)} we obtain
\begin{align*}
\eta_q (t,s,x;k,\mu) &\geqslant s^{\frac{\mu+k}{2}} t^{\frac{k-\mu}{2}} \int_{\lambda_0/\langle A_k(s)\rangle} ^{2\lambda_0/\langle A_k(s)\rangle} \mathrm{e}^{-\lambda (A_k(t)-A_k(s))} \frac{\sinh \big(\lambda (\phi_k(t)-\phi_k(s))\big) }{\lambda  (\phi_k(t)-\phi_k(s)) }\, \mathrm{e}^{-\lambda (A_k(s)+R)}  \varphi_\lambda(x) \lambda^q \,\mathrm{d}\lambda \\
&\geqslant \tfrac{B}{2} s^{\frac{\mu+k}{2}} t^{\frac{k-\mu}{2}}\int_{\lambda_0/\langle A_k(s)\rangle} ^{2\lambda_0/\langle A_k(s)\rangle}  \frac{1-\mathrm{e}^{-2\lambda (\phi_k(t)-\phi_k(s))}}{\phi_k(t)-\phi_k(s) }\,  \lambda^{q-1} \,\mathrm{d}\lambda  \\
&\geqslant \tfrac{B}{2} s^{\frac{\mu+k}{2}} t^{\frac{k-\mu}{2}} \frac{1-\mathrm{e}^{-2\lambda_0  \frac{\phi_k(t)-\phi_k(s)}{\langle A_k(s)\rangle}}}{\phi_k(t)-\phi_k(s) } \int_{\lambda_0/\langle A_k(s)\rangle} ^{2\lambda_0/\langle A_k(s)\rangle}    \lambda^{q-1} \,\mathrm{d}\lambda \\
& = \frac{B(2^q-1)\lambda_0^q}{2q} \,   s^{\frac{\mu+k}{2}} t^{\frac{k-\mu}{2}} \langle A_k(s)\rangle^{-q} \, \frac{1-\mathrm{e}^{-2\lambda_0  \frac{\phi_k(t)-\phi_k(s)}{\langle A_k(s)\rangle}}}{\phi_k(t)-\phi_k(s) } ,
\end{align*} with obvious modifications in the case $q=0$. The previous inequality implies \eqref{lower bound eta q}, provided that we show the validity of the inequality $$\frac{1-\mathrm{e}^{-2\lambda_0  \frac{\phi_k(t)-\phi_k(s)}{\langle A_k(s)\rangle}}}{\phi_k(t)-\phi_k(s) } \gtrsim \langle A_k(t)\rangle^{-1}.  $$ Hence, we need to prove this inequality. For $\phi_k(t)-\phi_k(s)\geqslant \frac{1}{2\lambda_0}\langle A_k(s)\rangle $, it holds  $$1-\mathrm{e}^{-2\lambda_0  \frac{\phi_k(t)-\phi_k(s)}{\langle A_k(s)\rangle}} \geqslant 1-\mathrm{e}^{-1}$$ and, consequently,
\begin{align*}
\frac{1-\mathrm{e}^{-2\lambda_0  \frac{\phi_k(t)-\phi_k(s)}{\langle A_k(s)\rangle}}}{\phi_k(t)-\phi_k(s) }  & \gtrsim \big(\phi_k(t)-\phi_k(s) \big)^{-1} \geqslant A_k(t)^{-1} \geqslant \langle A_k(t)\rangle^{-1}.
\end{align*} On the other hand, when $\phi_k(t)-\phi_k(s)\leqslant \frac{1}{2\lambda_0}\langle A_k(s)\rangle $, using the estimate $1-\mathrm{e}^{-\sigma}\geqslant \sigma/2$ for $\sigma\in [0,1]$, we get easily
\begin{align*}
\frac{1-\mathrm{e}^{-2\lambda_0  \frac{\phi_k(t)-\phi_k(s)}{\langle A_k(s)\rangle}}}{\phi_k(t)-\phi_k(s) }  & \geqslant  \frac{\lambda_0}{\langle A_k(s)\rangle} \geqslant  \frac{\lambda_0}{\langle A_k(t)\rangle}.
\end{align*} Therefore, the proof of \eqref{lower bound eta q} is completed.
\end{proof}

Next we prove an upper bound estimate in the special case $s=t$.
\begin{lemma} \label{Lemma lupper bound estimate xi q t=s} Let $n\geqslant 1$, $k\in[0,1)$, $\mu\geqslant 0$ and $\lambda_0>0$. If we assume  $q> (n-3)/2$, then, for $t\geqslant 1$ and $|x|\leqslant A_k(t) +R$ the following  upper bound estimate holds:
\begin{align}
\xi_q (t,t,x;k,\mu) &\leqslant B_2 \langle A_k(t)\rangle ^{-\frac{n-1}{2}}  \langle A_k(t) - |x|\rangle ^{\frac{n-3}{2}-q}. \label{upper bound xi q t=s}
\end{align}
Here $B_2$ is a positive constant depending only on $\lambda_0,q,R,k$ and $\langle y\rangle$ denotes the same function as in the statement of Lemma \ref{Lemma lower bound estimates xi q and eta q}.
\end{lemma}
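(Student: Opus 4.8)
The goal is to prove the upper bound estimate \eqref{upper bound xi q t=s} for the auxiliary function $\xi_q$ evaluated at $s=t$. Recall that by definition \eqref{def xi q},
\[
\xi_q(t,t,x;k,\mu) = \int_0^{\lambda_0} \mathrm{e}^{-\lambda(A_k(t)+R)} \, y_0(t,t;\lambda,k,\mu)\, \varphi_\lambda(x) \, \lambda^q \, \mathrm{d}\lambda.
\]
Since $y_0(t,t;\lambda,k,\mu) = \delta_{00} = 1$ by the initial condition in \eqref{CP yj(t,s;lambda,k)}, the integrand simplifies dramatically: the whole Bessel-function content collapses, and we are left with
\[
\xi_q(t,t,x;k,\mu) = \int_0^{\lambda_0} \mathrm{e}^{-\lambda(A_k(t)+R)} \, \varphi(\lambda x) \, \lambda^q \, \mathrm{d}\lambda.
\]
This is the crucial simplification that makes the estimate tractable, and it is exactly the point where the diagonal case $s=t$ is far easier than the general one.

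\textbf{Key steps.} First I would substitute the asymptotic two-sided bound \eqref{asymptotic YordanovZhang function} for $\varphi$, namely $\varphi(\lambda x) \lesssim \langle \lambda|x|\rangle^{-\frac{n-1}{2}} \mathrm{e}^{\lambda|x|}$, to reduce the problem to estimating a scalar integral of the form
\[
\int_0^{\lambda_0} \mathrm{e}^{-\lambda(A_k(t)+R - |x|)} \, \langle \lambda |x|\rangle^{-\frac{n-1}{2}} \, \lambda^q \, \mathrm{d}\lambda.
\]
Writing $w \doteq A_k(t) + R - |x| \geq R > 0$ (which is positive precisely because $|x| \leq A_k(t)+R$, and in fact $w \geq R$ on the light cone), this is a Laplace-type integral with exponential decay rate $w$. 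The essential mechanism is that the factor $\mathrm{e}^{-\lambda w}$ forces the effective range of integration to be $\lambda \lesssim 1/\langle w\rangle$, so that powers of $\lambda$ translate into negative powers of $\langle w\rangle = \langle A_k(t)-|x|\rangle$ after accounting for the additive constant $R$.

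\textbf{Carrying out the estimate.} I would split the analysis according to whether $\lambda|x|$ is large or small relative to $1$, to handle the bracket $\langle \lambda|x|\rangle^{-(n-1)/2}$. In the regime $\lambda |x| \geq 1$ the bracket contributes a factor $(\lambda|x|)^{-(n-1)/2}$, which combines with $\lambda^q$ and the exponential; bounding $|x| \gtrsim A_k(t)$ (valid away from a bounded region, and trivially adjustable using $\langle A_k(t)\rangle$) extracts the prefactor $\langle A_k(t)\rangle^{-\frac{n-1}{2}}$, while the remaining $\lambda$-integral $\int \mathrm{e}^{-\lambda w}\lambda^{q-\frac{n-1}{2}}\,\mathrm{d}\lambda$ is controlled by $\langle w\rangle^{-(q-\frac{n-1}{2}+1)} = \langle w\rangle^{\frac{n-3}{2}-q}$ via the standard Gamma-integral scaling, which requires the exponent $q - \frac{n-1}{2} > -1$, i.e. exactly the hypothesis $q > (n-3)/2$. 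In the complementary regime $\lambda|x| \leq 1$ the bracket is $\approx 1$ and the integration range is itself bounded by $1/|x| \lesssim 1/\langle A_k(t)\rangle$, so a direct bound again produces the two requisite powers. Throughout, the harmless additive constants $R$ can be absorbed by passing to the angle brackets $\langle \cdot \rangle$.

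\textbf{Main obstacle.} The only genuine care is bookkeeping the interplay between the two length scales $\langle A_k(t)\rangle$ and $\langle A_k(t)-|x|\rangle$: when $|x|$ is close to $A_k(t)+R$ the quantity $w$ is of order $R$ and the bracket $\langle A_k(t)-|x|\rangle$ saturates at a constant, whereas the prefactor $\langle A_k(t)\rangle^{-(n-1)/2}$ remains small; conversely when $|x|$ is small both $w$ and $A_k(t)$ are comparable. I expect the case split on $\lambda|x| \gtrless 1$, together with keeping track of which variable governs the exponential decay, to be the one delicate point; the underlying computation is otherwise a routine Laplace-integral estimate entirely parallel to Lemma~3.1 in \cite{WakYor18}, so I would model the argument on that reference.
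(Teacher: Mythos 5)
Your proposal follows essentially the same route as the paper: the paper's entire proof consists of observing that $y_0(t,t;\lambda,k,\mu)=1$ reduces $\xi_q(t,t,x;k,\mu)$ to the $\mu$-independent integral $\int_0^{\lambda_0}\mathrm{e}^{-\lambda(A_k(t)+R)}\varphi_\lambda(x)\lambda^q\,\mathrm{d}\lambda$ and then citing \cite[Lemma 2.7]{PTY20}, where exactly your Laplace-type computation (two-sided bound \eqref{asymptotic YordanovZhang function} on $\varphi$, split at $\lambda|x|\approx 1$, Gamma-integral scaling using precisely $q>(n-3)/2$) is carried out. Two minor slips to repair when writing up the details, neither affecting the scheme: on the support one only has $w=A_k(t)+R-|x|\geqslant 0$, not $w\geqslant R$ (harmless, since $\langle\,\cdot\,\rangle\geqslant 3$ absorbs small $w$), and in the regime $\lambda|x|\geqslant 1$ with $|x|\ll A_k(t)$ one cannot invoke $|x|\gtrsim A_k(t)$ — there the needed smallness comes instead from $\mathrm{e}^{-\lambda w}$ with $\lambda\geqslant 1/|x|$ and $w\approx A_k(t)$.
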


\begin{proof}  Due to the representation
\begin{align*}
\xi_q(t,t,x;k,\mu) & = \int_0^{\lambda_0} \mathrm{e}^{-\lambda (A_k(t)+R)}  \varphi_\lambda(x) \lambda^q \,\mathrm{d}\lambda, 
\end{align*} the proof is exactly the same as in \cite[Lemma 2.7]{PTY20}.
\end{proof}

\subsection{Derivation of the iteration frame} \label{Subsection iteration frame}

In this section, we define the time -- dependent functional whose dynamic is studied in order to prove the blow -- up result. Then, we derive the iteration frame for this functional and a first lower bound estimate of logarithmic type.

For $t\geqslant 1$ we introduce the functional
\begin{align}\label{def functional mathcalU}
\mathcal{U}(t) \doteq t^{\frac{\mu-k}{2}} \int_{\mathbb{R}^n} u(t,x) \, \xi_q(t,t,x;k,\mu) \, \mathrm{d} x
\end{align}  for some $q>(n-3)/2$.  

From \eqref{fundamental inequality functional mathcalU}, \eqref{lower bound xi q} and \eqref{lower bound eta q}, it follows
\begin{align*}
\mathcal{U}(t) \gtrsim B_0 \varepsilon  \int_{\mathbb{R}^n} u_0(x) \, \mathrm{d}x + B_1 \varepsilon \, \frac{A_k(t)}{\langle A_k(t)\rangle } \int_{\mathbb{R}^n} u_1(x) \, \mathrm{d}x.
\end{align*} As we assume  both $u_0,u_1$  nonnegative and nontrivial, then, we find that 
\begin{align}\label{mathcalU > epsilon}
\mathcal{U}(t)\gtrsim \varepsilon
\end{align} for any $t\in [1,T)$, where the unexpressed multiplicative constant depends on $u_0,u_1$.
In the next proposition, we derive the iteration frame for the functional $\mathcal{U}$ for a given value of $q$.

\begin{proposition} \label{Proposition iteration frame} Let $n\geqslant 1$, $k\in [0,1)$ and $\mu\in [0,k]\cup [2-k,\infty)$. Let us consider $u_0\in H^1(\mathbb{R}^n)$ and $u_1\in L^2(\mathbb{R}^n)$ such that $\mathrm{supp}\, u_j \subset B_R$ for $j=0,1$ and for some $R>0$ and let $u$ be a local in time energy solution to \eqref{Semi EdeS k damped} on $[1,T)$ according to Definition \ref{Def energy sol}. If $\mathcal{U}$ is defined by \eqref{def functional mathcalU} with $q=(n-1)/2-1/p$, then, there exists a positive constant $C=C(n,p,R,k,\mu)$ such that
\begin{align} \label{iteration frame} 
\mathcal{U}(t)\geqslant C \langle A_k(t)\rangle^{-1}\int_1^t \frac{\phi_k(t)-\phi_k(s)}{s} \big(\log \langle A_k(s)\rangle\big)^{-(p-1)} (\mathcal{U}(s))^p\, \mathrm{d}s
\end{align} for any $t\in (1,T)$. 
\end{proposition}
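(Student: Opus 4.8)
The plan is to begin from the fundamental equality \eqref{fundamental inequality functional mathcalU} of Corollary \ref{Corollary fund ineq} and to retain only the integral term on its right-hand side. Since $u_0,u_1\geqslant 0$ and the kernels $y_0,y_1$ are nonnegative (cf.\ Proposition \ref{Proposition lower bound estimates y0 and y1}), the functions $\xi_q(t,1,\cdot\,;k,\mu)$ and $\eta_q(t,1,\cdot\,;k,\mu)$ are nonnegative superpositions against the positive weight $\mathrm{e}^{-\lambda(A_k(t)+R)}\varphi_\lambda\,\lambda^q$, so the two data contributions in \eqref{fundamental inequality functional mathcalU} may be dropped. Multiplying the remaining inequality by $t^{\frac{\mu-k}{2}}$ yields
\begin{align*}
\mathcal{U}(t)\geqslant t^{\frac{\mu-k}{2}}\int_1^t \big(\phi_k(t)-\phi_k(s)\big)\int_{\mathbb{R}^n}|u(s,x)|^p\,\eta_q(t,s,x;k,\mu)\,\mathrm{d}x\,\mathrm{d}s.
\end{align*}
As $u(s,\cdot)$ is supported in $B_{A_k(s)+R}$, the lower bound \eqref{lower bound eta q} applies on the effective domain of integration, and extracting its $(s,t)$-dependent prefactor leaves the space integral $\int_{\mathbb{R}^n}|u(s,x)|^p\,\mathrm{d}x$.

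The heart of the matter is to bound this space integral below by a power of $\mathcal{U}(s)$. First I would apply H\"older's inequality with exponents $p$ and $p'=\tfrac{p}{p-1}$ over $B_{A_k(s)+R}$, using $\xi_q\geqslant 0$, to get
\begin{align*}
\mathcal{U}(s)\leqslant s^{\frac{\mu-k}{2}}\left(\int_{B_{A_k(s)+R}}|u(s,x)|^p\,\mathrm{d}x\right)^{1/p}\left(\int_{B_{A_k(s)+R}}\big(\xi_q(s,s,x;k,\mu)\big)^{p'}\,\mathrm{d}x\right)^{1/p'}.
\end{align*}
The last factor is handled by the upper bound \eqref{upper bound xi q t=s}. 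Here the choice $q=\tfrac{n-1}{2}-\tfrac1p$ is decisive: it makes $\big(\tfrac{n-3}{2}-q\big)p'=-1$, so that the radial integral $\int_0^{A_k(s)+R}\langle A_k(s)-r\rangle^{-1}r^{n-1}\,\mathrm{d}r$ diverges logarithmically and produces exactly one factor $\log\langle A_k(s)\rangle$; bounding $r^{n-1}\lesssim\langle A_k(s)\rangle^{n-1}$ then gives $\int_{B_{A_k(s)+R}}(\xi_q(s,s,x))^{p'}\,\mathrm{d}x\lesssim\langle A_k(s)\rangle^{(n-1)(1-p'/2)}\log\langle A_k(s)\rangle$. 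Solving the H\"older inequality for the space integral and using $\tfrac{p}{p'}=p-1$ yields
\begin{align*}
\int_{\mathbb{R}^n}|u(s,x)|^p\,\mathrm{d}x\gtrsim s^{-\frac{\mu-k}{2}p}\,\langle A_k(s)\rangle^{(n-1)\left(1-\frac p2\right)}\big(\log\langle A_k(s)\rangle\big)^{-(p-1)}\,(\mathcal{U}(s))^p,
\end{align*}
where positivity of $\mathcal{U}(s)$ from \eqref{mathcalU > epsilon} lets me drop the absolute value.

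Finally I would substitute this estimate, together with \eqref{lower bound eta q}, into the first display. The powers $t^{\frac{\mu-k}{2}}$ and $t^{\frac{k-\mu}{2}}$ cancel, the factor $\langle A_k(t)\rangle^{-1}$ survives, and $\big(\log\langle A_k(s)\rangle\big)^{-(p-1)}$ appears as in \eqref{iteration frame}. It then remains to verify that the collected powers of $s$ and $\langle A_k(s)\rangle$ collapse to exactly $s^{-1}$; using that $\langle A_k(s)\rangle$ is comparable to $s^{1-k}$ for $s\geqslant 1$, the required identity is
\begin{align*}
\frac{\mu+k}{2}-\frac{\mu-k}{2}p+(1-k)\left(-\frac{(n-1)(p-1)}{2}+\frac1p\right)=-1.
\end{align*}
I expect this algebraic check to be the main obstacle: after clearing denominators it is equivalent to the assertion that $p$ solves the critical quadratic \eqref{intro equation critical exponent shifted}. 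Since $p=p_0\big(k,n+\tfrac{\mu}{1-k}\big)$ is precisely its positive root, the net power is $-1$, the integrand reduces to $\tfrac{\phi_k(t)-\phi_k(s)}{s}\big(\log\langle A_k(s)\rangle\big)^{-(p-1)}(\mathcal{U}(s))^p$, and \eqref{iteration frame} follows. In short, the logarithmic iteration frame becomes available exactly at the critical exponent, which is the source of the delicacy of this case.
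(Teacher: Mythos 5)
Your proposal is correct and takes essentially the same route as the paper: the fundamental identity \eqref{fundamental inequality functional mathcalU} with the data terms dropped by nonnegativity, H\"older's inequality against the weight $\xi_q(s,s,\cdot)$ with $q=\tfrac{n-1}{2}-\tfrac1p$ forcing the exponent $\big(\tfrac{n-3}{2}-q\big)p'=-1$ and hence the single factor $\log\langle A_k(s)\rangle$ via \eqref{upper bound xi q t=s}, the lower bound \eqref{lower bound eta q} for $\eta_q$, and the final power count reducing through $s\approx\langle A_k(s)\rangle^{\frac{1}{1-k}}$ to the identity \eqref{equation critical exponent intermediate}, which (your displayed identity is exactly $(1-k)$ times it) holds precisely because $p=p_0\big(k,n+\tfrac{\mu}{1-k}\big)$ solves \eqref{intro equation critical exponent shifted}. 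The only cosmetic difference is the order of operations: you apply the $x$-independent lower bound \eqref{lower bound eta q} before H\"older's inequality and then estimate $\int_{\mathbb{R}^n}|u(s,x)|^p\,\mathrm{d}x$, whereas the paper keeps $\eta_q$ inside the H\"older quotient $\big(\xi_q(s,s,x)\big)^{p'}\big(\eta_q(t,s,x)\big)^{-p'/p}$ and bounds it afterwards — since that bound does not depend on $x$, the two computations coincide term by term.
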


\begin{proof}
By \eqref{def functional mathcalU}, applying H\"older's inequality we find
\begin{align*}
s^{\frac {k-\mu}2} \mathcal{U} (s) \leq \left(\int_{\mathbb{R}^n}|u(s,x)|^p \eta_q(t,s,x;k,\mu) \, \mathrm{d}x\right)^{1/p} \left(\int_{B_{R+A_k(s)}} \frac{\big(\xi_q(s,s,x;k,\mu)\big)^{p'}}{\big(\eta_q(t,s,x;k,\mu)\big)^{p'/p}} \, \mathrm{d}x\right)^{1/p'}.
\end{align*} Hence,
\begin{align} \label{intermediate lower bound int |u|^p eta_q}
\int_{\mathbb{R}^n}|u(s,x)|^p \eta_q(t,s,x;k,\mu) \, \mathrm{d}x \geqslant \big(s^{\frac {k-\mu}2} \mathcal{U} (s)\big)^p \left(\int_{B_{R+A_k(s)}} \frac{\big(\xi_q(s,s,x;k,\mu)\big)^{p/(p-1)}}{\big(\eta_q(t,s,x;k,\mu)\big)^{1/(p-1)}} \, \mathrm{d}x\right)^{-(p-1)}.
\end{align} Let us determine an upper bound  for the integral on the right -- hand side of \eqref{intermediate lower bound int |u|^p eta_q}. By using \eqref{upper bound xi q t=s} and \eqref{lower bound eta q}, we obtain
\begin{align*}
& \int_{B_{R+A_k(s)}}  \frac{\big(\xi_q(s,s,x;k,\mu)\big)^{p/(p-1)}}{\big(\eta_q(t,s,x;k,\mu)\big)^{1/(p-1)}} \, \mathrm{d}x \\ 
& \qquad \leqslant   B_1^{-\frac{1}{p-1}} B_2^{\frac{p}{p-1}}  s^{-\frac{\mu+k}{2(p-1)}} t^{-\frac{k-\mu}{2(p-1)}}  \langle A_k(s)\rangle ^{-\frac{n-1}{2}\frac{p}{p-1} +\frac{q}{p-1}} \langle A_k(t)\rangle ^{\frac{1}{p-1}} \int_{B_{R+A_k(s)}}  \langle A_k(s) - |x|\rangle ^{(\frac{n-3}{2}-q)\frac{p}{p-1}}  \mathrm{d}x \\
 & \qquad \leqslant   B_1^{-\frac{1}{p-1}} B_2^{\frac{p}{p-1}}   s^{-\frac{\mu+k}{2(p-1)}} t^{\frac{\mu-k}{2(p-1)}}  \langle A_k(t)\rangle ^{\frac{1}{p-1}}\langle A_k(s)\rangle ^{\frac{1}{p-1}(-\frac{n-1}{2}p+\frac{n-1}{2}-\frac{1}{p})}\int_{B_{R+A_k(s)}}  \langle A_k(s) - |x|\rangle ^{-1} \mathrm{d}x \\
 & \qquad \leqslant   B_1^{-\frac{1}{p-1}} B_2^{\frac{p}{p-1}}   s^{-\frac{\mu+k}{2(p-1)}} t^{\frac{\mu-k}{2(p-1)}}  \langle A_k(t)\rangle ^{\frac{1}{p-1}}\langle A_k(s)\rangle ^{\frac{1}{p-1}(-\frac{n-1}{2}p+\frac{n-1}{2}-\frac{1}{p})+n-1}  \log \langle A_k(s) \rangle  ,
\end{align*} where in the second inequality we used value of
 $q$ to get exactly $-1$ as power for the function in the integral.
Consequently, from \eqref{intermediate lower bound int |u|^p eta_q} we have
\begin{align*}
\int_{\mathbb{R}^n}|u(s,x)|^p \eta_q(t,s,x;k,\mu) & \, \mathrm{d}x \gtrsim \big(s^{\frac {k-\mu}2} \mathcal{U} (s)\big)^p s^{\frac{\mu+k}{2}} t^{\frac{k-\mu}{2}}  \langle A_k(t)\rangle ^{-1}\langle A_k(s)\rangle ^{-\frac{n-1}{2}(p-1)+\frac{1}{p}}  \big(\log \langle A_k(s) \rangle\big)^{-(p-1)} \\
& \gtrsim t^{\frac{k-\mu}{2}}   \langle A_k(t)\rangle ^{-1} s^{\frac {k}{2}(p+1)+\frac{\mu}{2}(1-p)} \langle A_k(s)\rangle ^{-\frac{n-1}{2}(p-1)+\frac{1}{p}}    \big(\log \langle A_k(s) \rangle\big)^{-(p-1)} \big(\mathcal{U} (s)\big)^p.
\end{align*}
Combining the previous lower bound estimate and \eqref{fundamental inequality functional mathcalU}, we arrive at
\begin{align*}
\mathcal{U}(t) & \geqslant t^{\frac{\mu-k}{2}} \int_1^t  (\phi_k(t)-\phi_k(s))   \int_{\mathbb{R}^n} |u(s,x)|^p \eta_q(t,s,x;k,\mu) \, \mathrm{d}x \, \mathrm{d}s \\
& \gtrsim    \langle A_k(t)\rangle ^{-1}  \int_1^t  (\phi_k(t)-\phi_k(s)) \, s^{\frac {k}{2}(p+1)+\frac{\mu}{2}(1-p)} \langle A_k(s)\rangle ^{-\frac{n-1}{2}(p-1)+\frac{1}{p}}    \big(\log \langle A_k(s) \rangle\big)^{-(p-1)} \big(\mathcal{U} (s)\big)^p\, \mathrm{d}s \\
& \gtrsim    \langle A_k(t)\rangle ^{-1}  \int_1^t  (\phi_k(t)-\phi_k(s))  \langle A_k(s)\rangle ^{\frac {k(p+1)}{2(1-k)}-\frac{\mu(p-1)}{2(1-k)}-\frac{n-1}{2}(p-1)+\frac{1}{p}}    \big(\log \langle A_k(s) \rangle\big)^{-(p-1)} \big(\mathcal{U} (s)\big)^p\, \mathrm{d}s\\
& \gtrsim    \langle A_k(t)\rangle ^{-1}  \int_1^t  (\phi_k(t)-\phi_k(s))  \langle A_k(s)\rangle ^{-\left(\frac{n-1}{2}+\frac{\mu-k}{2(1-k)}\right)p+\left(\frac{n-1}{2}+\frac{\mu+k}{2(1-k)}\right)+\frac 1p}    \big(\log \langle A_k(s) \rangle\big)^{-(p-1)} \big(\mathcal{U} (s)\big)^p\, \mathrm{d}s,
\end{align*} where in third step we used $s= (1-k)^{\frac{1}{1-k}}(A_k(s)+\phi_k(1))^{\frac{1}{1-k}}\approx \langle A_k(s)\rangle^{\frac{1}{1-k}}$ for $s\geqslant 1$. Since $p=p_0\big(k,n+\frac{\mu}{1-k}\big)$ from \eqref{intro equation critical exponent shifted} it follows
\begin{align}\label{equation critical exponent intermediate}
-\left(\tfrac{n-1}{2}+\tfrac{\mu-k}{2(1-k)}\right)p+\left(\tfrac{n-1}{2}+\tfrac{\mu+k}{2(1-k)}\right)+\tfrac 1p= -1-\tfrac{k}{1-k}=-\tfrac{1}{1-k},
\end{align} then, plugging \eqref{equation critical exponent intermediate} in the above lower bound estimate for $\mathcal{U}(t)$ it yields
\begin{align*}
\mathcal{U}(t) &  \gtrsim    \langle A_k(t)\rangle ^{-1}  \int_1^t  (\phi_k(t)-\phi_k(s))  \langle A_k(s)\rangle ^{-\frac{1}{1-k}}    \big(\log \langle A_k(s) \rangle\big)^{-(p-1)} \big(\mathcal{U} (s)\big)^p\, \mathrm{d}s \\
&  \gtrsim    \langle A_k(t)\rangle ^{-1}  \int_1^t  \frac{\phi_k(t)-\phi_k(s)}{s}  \big(\log \langle A_k(s) \rangle\big)^{-(p-1)} \big(\mathcal{U} (s)\big)^p\, \mathrm{d}s,
\end{align*} which is exactly \eqref{iteration frame}. Therefore,  the proof is completed.
\end{proof}

\begin{lemma} \label{Lemma lower bound int |u|^p} Let $n\geqslant 1$, $k\in [0,1)$ and $\mu\in [0,k]\cup [2-k,\infty)$. Let us consider $u_0\in H^1(\mathbb{R}^n)$ and $u_1\in L^2(\mathbb{R}^n)$ such that $\mathrm{supp}\, u_j \subset B_R$ for $j=0,1$ and for some $R>0$ and let $u$ be a local in time energy solution to \eqref{Semi EdeS k damped} on $[1,T)$ according to Definition \ref{Def energy sol}. Then, there exists a positive constant $K=K(u_0,u_1,n,p,R,k,\mu)$ such that the lower bound estimate
\begin{align} \label{lower bound int |u|^p}
\int_{\mathbb{R}^n} |u(t,x)|^p \, \mathrm{d}x \geqslant K \varepsilon^p \langle A_k(t)\rangle^{(n-1)(1-\frac{p}{2})+\frac{(k-\mu)p}{2(1-k)}}
\end{align} holds for any $t\in (1,T)$. 
\end{lemma}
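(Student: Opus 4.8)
The plan is to bound $\int_{\mathbb{R}^n}|u(t,x)|^p\,\mathrm{d}x$ from below by applying Hölder's inequality with conjugate exponents $p$ and $p'$ to the quantity $\int_{\mathbb{R}^n}u(t,x)\,\xi_q(t,t,x;k,\mu)\,\mathrm{d}x$ (using the support condition to restrict the spatial integral to $B_{R+A_k(t)}$), choosing however a value of $q$ \emph{strictly} larger than the borderline $q=(n-1)/2-1/p$ employed in Proposition \ref{Proposition iteration frame}. The key observation is that the logarithmic factor appearing in the iteration frame is produced by the power $-1$ of $\langle A_k(s)-|x|\rangle$ in the spatial integral; choosing instead any fixed $q>(n-1)/2-1/p$ makes that power strictly below $-1$, so the corresponding integral converges and no logarithm survives, yielding the clean power estimate \eqref{lower bound int |u|^p}. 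Such a $q$ automatically satisfies $q>(n-3)/2$ and $q>-1$, hence both Lemma \ref{Lemma lower bound estimates xi q and eta q} and Lemma \ref{Lemma lupper bound estimate xi q t=s} apply. Throughout I work under $\mu\geqslant 2-k$; the range $\mu\in[0,k]$ is recovered at the end via the transformation $v=t^{\mu-1}u$ of Remark \ref{Remark transf v}.

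First I would produce a lower bound $\int_{\mathbb{R}^n}u(t,x)\,\xi_q(t,t,x;k,\mu)\,\mathrm{d}x\gtrsim\varepsilon\,t^{\frac{k-\mu}{2}}$, valid for every $t\in(1,T)$. This follows from the fundamental identity \eqref{fundamental inequality functional mathcalU} of Corollary \ref{Corollary fund ineq}: the nonlinear integral term is nonnegative and may be dropped, while both data terms are nonnegative since $u_0,u_1\geqslant 0$. Keeping the $u_0$ term and invoking the lower bound \eqref{lower bound xi q} for $\xi_q(t,1,x;k,\mu)$ with $s=1$ (so that $A_k(1)=0$ and $\langle A_k(1)\rangle=3$) gives $\int_{\mathbb{R}^n} u_0\,\xi_q(t,1,\cdot)\,\mathrm{d}x\gtrsim t^{\frac{k-\mu}{2}}\int_{\mathbb{R}^n}u_0\,\mathrm{d}x$, which is non-degenerate up to $t=1$ precisely because we exploit the $u_0$-contribution rather than the $u_1$-contribution (the latter carries a factor $A_k(t)/\langle A_k(t)\rangle$ that vanishes as $t\to 1$). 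This is the same mechanism behind \eqref{mathcalU > epsilon}.

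Next I would estimate the conjugate factor $\int_{B_{R+A_k(t)}}\big(\xi_q(t,t,x;k,\mu)\big)^{p'}\,\mathrm{d}x$ from above. Inserting the pointwise bound \eqref{upper bound xi q t=s} and passing to radial coordinates reduces it to $\langle A_k(t)\rangle^{-\frac{n-1}{2}p'}\int_0^{R+A_k(t)}\langle A_k(t)-r\rangle^{(\frac{n-3}{2}-q)p'}r^{n-1}\,\mathrm{d}r$. Since $r\leqslant R+A_k(t)\lesssim\langle A_k(t)\rangle$ and the exponent $(\frac{n-3}{2}-q)p'$ is strictly below $-1$, the remaining one-dimensional integral converges to a constant while $r^{n-1}$ contributes $\langle A_k(t)\rangle^{n-1}$; crucially the outcome $\int_{B_{R+A_k(t)}}\xi_q^{p'}\,\mathrm{d}x\lesssim\langle A_k(t)\rangle^{-\frac{n-1}{2}p'+n-1}$ is independent of the precise value of $q$.

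Finally, combining the two estimates through $\int|u|^p\geqslant\big(\int u\,\xi_q\big)^p\big(\int\xi_q^{p'}\big)^{-(p-1)}$ and using $p'(p-1)=p$ gives $\int|u|^p\gtrsim\varepsilon^p\,t^{\frac{(k-\mu)p}{2}}\langle A_k(t)\rangle^{(n-1)(1-\frac p2)}$; converting the power of $t$ via $t\approx\langle A_k(t)\rangle^{1/(1-k)}$ produces exactly the exponent in \eqref{lower bound int |u|^p}. The main obstacle is the exponent bookkeeping: verifying that the powers of $\langle A_k(t)\rangle$ from the two factors collapse to $(n-1)(1-\frac p2)+\frac{(k-\mu)p}{2(1-k)}$ with no residual dependence on $q$, which is what justifies the strict choice $q>(n-1)/2-1/p$ as the device that turns the logarithmic seed into a genuine power. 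A secondary technical point is the reduction of the case $\mu\in[0,k]$: running the whole argument for $v=t^{\mu-1}u$, whose damping constant $2-\mu\geqslant 2-k$ and whose data remain nonnegative, yields a clean lower bound for $\int|v|^p$, and the identity $\int|u|^p=t^{(1-\mu)p}\int|v|^p$ restores precisely the same power, the polynomial weight $t^{(1-\mu)(p-1)}$ in the transformed nonlinearity being harmless since it only multiplies the dropped nonlinear term.
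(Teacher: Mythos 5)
Your proposal is correct and follows essentially the same route as the paper: your condition $q>(n-1)/2-1/p$ is exactly the paper's choice $q>(n-3)/2+1/p'$ (since $1/p'=1-1/p$), and the H\"older step against $\xi_q(t,t,\cdot)$, the convergent radial integral giving $\int_{B_{R+A_k(t)}}\xi_q^{p'}\,\mathrm{d}x\lesssim \langle A_k(t)\rangle^{-\frac{n-1}{2}p'+n-1}$, and the conversion $t\approx\langle A_k(t)\rangle^{1/(1-k)}$ coincide with the paper's proof, which likewise starts from the bound $\mathcal{U}(t)\gtrsim\varepsilon$ obtained from the fundamental identity. Your explicit check that the transformation $v=t^{\mu-1}u$ restores the exponent $\frac{(k-\mu)p}{2(1-k)}$ in the case $\mu\in[0,k]$ is a detail the paper leaves implicit, but the mechanism is the same.
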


\begin{proof} We modify of the proof of Lemma 5.1 in \cite{WakYor18} accordingly to our model. Let us fix $q>(n-3)/2 +1/p'$. Combining \eqref{def functional mathcalU}, \eqref{mathcalU > epsilon} and H\"older's inequality, it results
\begin{align*}
\varepsilon t^{\frac{k-\mu}{2}} &\lesssim t^{\frac{k-\mu}{2}} \mathcal{U}(t) =  \int_{\mathbb{R}^n} u(t,x) \, \xi_q(t,t,x;k,\mu) \, \mathrm{d} x \\ & \leqslant \left(\int_{\mathbb{R}^n}|u(t,x)|^p\, \mathrm{d}x\right)^{1/p} \left(\int_{B_{R+A_k(t)}}\big(\xi_q(t,t,x;k,\mu\big)^{p'} \mathrm{d}x\right)^{1/p'}.
\end{align*} Hence,
\begin{align} \label{lower bound int |u|^p intermediate}
\int_{\mathbb{R}^n}|u(t,x)|^p\, \mathrm{d}x \gtrsim \varepsilon^p t^{\frac{k-\mu}{2}p} \left(\int_{B_{R+A_k(t)}}\big(\xi_q(t,t,x;k,\mu\big)^{p'} \mathrm{d}x\right)^{-(p-1)}.
\end{align} Let us determine an upper bound estimates for the integral of $\xi_q(t,t,x;k,\mu)^{p'}$. By using \eqref{upper bound xi q t=s}, we have
\begin{align*}
\int_{B_{R+A_k(t)}}\big(\xi_q(t,t,x;k,\mu\big)^{p'} \mathrm{d}x & \lesssim \langle A_k(t)\rangle ^{-\frac{n-1}{2}p'} \int_{B_{R+A_k(t)}} \langle A_k(t) - |x|\rangle ^{(n-3)p'/2-p'q} \, \mathrm{d}x \\
& \lesssim \langle A_k(t)\rangle ^{-\frac{n-1}{2}p'} \int_0^{R+A_k(t)} r^{n-1} \langle A_k(t) - r\rangle ^{(n-3)p'/2-p'q} \, \mathrm{d}r \\
& \lesssim \langle A_k(t)\rangle ^{-\frac{n-1}{2}p'+n-1} \int_0^{R+A_k(t)}  \langle A_k(t) - r\rangle ^{(n-3)p'/2-p'q} \, \mathrm{d}r.
\end{align*} Performing the change of variable $A_k(t)-r=\varrho$, one gets
\begin{align*}
\int_{B_{R+A_k(t)}}\big(\xi_q(t,t,x;k,\mu\big)^{p'} \mathrm{d}x & \lesssim  \langle A_k(t)\rangle ^{-\frac{n-1}{2}p'+n-1} \int^{A_k(t)}_{-R}  (3+|\varrho|)^{(n-3)p'/2-p'q} \, \mathrm{d}\varrho \\
& \lesssim  \langle A_k(t)\rangle ^{-\frac{n-1}{2}p'+n-1}
\end{align*} because of $(n-3)p'/2 -p'q<-1$.
If we combine this upper bound estimates for the integral of $\xi_q(t,t,x;k,\mu)^{p'}$, the inequality \eqref{lower bound int |u|^p intermediate} and we employ $t\approx\langle A_k(t)\rangle^{\frac{1}{1-k}}$ for $t\geqslant 1$, then, we arrive at \eqref{lower bound int |u|^p}. This completes the proof.
\end{proof}

In Proposition \ref{Proposition iteration frame}, we derive the iteration frame for $\mathcal{U}$. In the next result, we shall prove a first lower bound estimate of logarithmic type for $\mathcal{U}$, as base case for the iteration argument.

\begin{proposition} \label{Proposition first logarithmic lower bound mathcalU}  Let $n\geqslant 1$, $k\in [0,1)$ and $\mu\in [0,k]\cup [2-k,\infty)$. Let us consider $u_0\in H^1(\mathbb{R}^n)$ and $u_1\in L^2(\mathbb{R}^n)$ such that $\mathrm{supp}\, u_j \subset B_R$ for $j=0,1$ and for some $R>0$ and let $u$ be a local in time energy solution to \eqref{Semi EdeS k damped} on $[1,T)$ according to Definition \ref{Def energy sol}. Let $\mathcal{U}$ be defined by \eqref{def functional mathcalU} with $q=(n-1)/2-1/p$. Then, for $t\geqslant 3/2$ the functional $\mathcal{U}(t)$ fulfills
\begin{align} \label{first logarithmic lower bound mathcalU}
\mathcal{U}(t) \geqslant M \varepsilon^p \log\left(\tfrac{2t}{3}\right),
\end{align} where  the positive constant $M$ depends on $u_0,u_1,n,p,R,k,\mu$.
\end{proposition}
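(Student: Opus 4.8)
The plan is to combine the fundamental equality \eqref{fundamental inequality functional mathcalU} from Corollary \ref{Corollary fund ineq} with the two lower bounds already at our disposal, namely the pointwise estimate \eqref{lower bound eta q} for $\eta_q$ and the bound \eqref{lower bound int |u|^p} for the $L^p$ norm of $u(s,\cdot)$ from Lemma \ref{Lemma lower bound int |u|^p}. First I would multiply \eqref{fundamental inequality functional mathcalU} by $t^{\frac{\mu-k}{2}}$, so that the left-hand side becomes exactly $\mathcal{U}(t)$ in the sense of \eqref{def functional mathcalU}. Since $u_0,u_1$ are nonnegative and $\xi_q,\eta_q$ are positive, the two terms carrying the initial data are nonnegative and may be discarded, which leaves
\[
\mathcal{U}(t)\geqslant t^{\frac{\mu-k}{2}}\int_1^t (\phi_k(t)-\phi_k(s))\int_{\mathbb{R}^n}|u(s,x)|^p\,\eta_q(t,s,x;k,\mu)\,\mathrm{d}x\,\mathrm{d}s.
\]

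The next step is to insert the estimates into this inequality. Because $u(s,\cdot)$ is supported in $B_{A_k(s)+R}$, the lower bound \eqref{lower bound eta q} is valid on the whole support, so I would replace $\eta_q$ there by $B_1\,s^{\frac{\mu+k}{2}}t^{\frac{k-\mu}{2}}\langle A_k(t)\rangle^{-1}\langle A_k(s)\rangle^{-q}$ and then bound $\int_{\mathbb{R}^n}|u(s,x)|^p\,\mathrm{d}x$ from below by \eqref{lower bound int |u|^p}. The powers $t^{\frac{\mu-k}{2}}$ and $t^{\frac{k-\mu}{2}}$ cancel, and after writing $s\approx\langle A_k(s)\rangle^{1/(1-k)}$ the exponent of $\langle A_k(s)\rangle$ collapses to $-\tfrac{1}{1-k}$ by precisely the critical-exponent identity \eqref{equation critical exponent intermediate} (with $q=(n-1)/2-1/p$, so that $-q=-\tfrac{n-1}{2}+\tfrac1p$). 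Since $\langle A_k(s)\rangle\lesssim s^{1-k}$ for $s\geqslant 1$, one has $\langle A_k(s)\rangle^{-1/(1-k)}\gtrsim s^{-1}$, whence
\[
\mathcal{U}(t)\gtrsim \varepsilon^p\,\langle A_k(t)\rangle^{-1}\int_1^t \frac{\phi_k(t)-\phi_k(s)}{s}\,\mathrm{d}s.
\]

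It then remains to bound this last integral from below by $\langle A_k(t)\rangle\log(2t/3)$. Here I would shrink the domain of integration to $s\in[1,2t/3]$; this interval is nonempty precisely when $t\geqslant 3/2$, which is the source of the range in the statement. On it, $\phi_k(t)-\phi_k(s)\geqslant \phi_k(t)-\phi_k(2t/3)=\tfrac{t^{1-k}}{1-k}\big(1-(2/3)^{1-k}\big)$, a quantity comparable to $\langle A_k(t)\rangle$ uniformly for $t\geqslant 3/2$ (both grow like $t^{1-k}$, and the elementary inequality $\langle A_k(t)\rangle\leqslant C\,t^{1-k}$ on $[3/2,\infty)$ supplies the one-sided comparison needed); pulling this factor out and using $\int_1^{2t/3}\tfrac{\mathrm{d}s}{s}=\log(2t/3)$ gives $\mathcal{U}(t)\gtrsim\varepsilon^p\log(2t/3)$, i.e.\ \eqref{first logarithmic lower bound mathcalU}. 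The only genuinely delicate points are the bookkeeping in the exponent collapse and verifying the uniform comparison $\phi_k(t)-\phi_k(2t/3)\gtrsim\langle A_k(t)\rangle$ on $[3/2,\infty)$; everything else is a direct substitution into already-proved estimates. The final constant $M$ absorbs $B_1$, the constant $K$ of \eqref{lower bound int |u|^p}, and the factor $1-(2/3)^{1-k}$, and therefore depends only on $u_0,u_1,n,p,R,k,\mu$, as claimed.
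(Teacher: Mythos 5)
Your proposal is correct and follows essentially the same route as the paper: drop the nonnegative data terms in \eqref{fundamental inequality functional mathcalU}, insert \eqref{lower bound eta q} and \eqref{lower bound int |u|^p}, convert powers of $s$ via $s\approx\langle A_k(s)\rangle^{1/(1-k)}$, and collapse the exponent of $\langle A_k(s)\rangle$ to $-\tfrac{1}{1-k}$ through the critical-exponent identity \eqref{equation critical exponent intermediate}. The only (harmless) deviation is in the final elementary step: you shrink the integration to $s\in[1,2t/3]$ and use $\phi_k(t)-\phi_k(s)\geqslant\phi_k(t)-\phi_k(2t/3)\gtrsim\langle A_k(t)\rangle$ uniformly for $t\geqslant 3/2$, whereas the paper integrates by parts to rewrite $\int_1^t \frac{\phi_k(t)-\phi_k(s)}{s}\,\mathrm{d}s=\int_1^t s^{-k}\log s\,\mathrm{d}s$ and then restricts to $[2t/3,t]$ — both yield the same lower bound $\gtrsim\varepsilon^p\log(2t/3)$.
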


\begin{proof}
From \eqref{fundamental inequality functional mathcalU} it results
\begin{align*}
\mathcal{U}(t) & \geqslant  t^{\frac{\mu-k}{2}} \int_1^t  (\phi_k(t)-\phi_k(s))   \int_{\mathbb{R}^n} |u(s,x)|^p \eta_q(t,s,x;k,\mu) \, \mathrm{d}x \, \mathrm{d}s.
\end{align*}  Consequently, applying \eqref{lower bound eta q} first and then \eqref{lower bound int |u|^p}, we find
\begin{align*}
\mathcal{U}(t) & \geqslant B_1  \langle A_k(t)\rangle ^{-1} \int_1^t  (\phi_k(t)-\phi_k(s)) \, s^{\frac{\mu+k}{2}} \langle A_k(s)\rangle ^{-q} \int_{\mathbb{R}^n} |u(s,x)|^p  \, \mathrm{d}x \, \mathrm{d}s \\
& \geqslant B_1 K \varepsilon^p \langle A_k(t)\rangle ^{-1} \int_1^t  (\phi_k(t)-\phi_k(s)) \, s^{\frac{\mu+k}{2}}   \langle A_k(s)\rangle^{-q+(n-1)(1-\frac{p}{2})+\frac{(k-\mu)p}{2(1-k)}} \,  \mathrm{d}s \\
& \gtrsim \varepsilon^p \langle A_k(t)\rangle ^{-1} \int_1^t  (\phi_k(t)-\phi_k(s))    \langle A_k(s)\rangle^{\frac{\mu+k}{2(1-k)}-\frac{n-1}{2}+\frac{1}{p}+(n-1)(1-\frac{p}{2})+\frac{(k-\mu)p}{2(1-k)}}  \, \mathrm{d}s \\
& \gtrsim  \varepsilon^p \langle A_k(t)\rangle ^{-1} \int_1^t  (\phi_k(t)-\phi_k(s))  \langle A_k(s)\rangle ^{-\left(\frac{n-1}{2}+\frac{\mu-k}{2(1-k)}\right)p+\left(\frac{n-1}{2}+\frac{\mu+k}{2(1-k)}\right)+\frac 1p}    \, \mathrm{d}s \\
& \gtrsim  \varepsilon^p \langle A_k(t)\rangle ^{-1} \int_1^t  (\phi_k(t)-\phi_k(s))  \langle A_k(s)\rangle ^{-\frac{1}{1-k}}    \, \mathrm{d}s  \gtrsim  \varepsilon^p \langle A_k(t)\rangle ^{-1} \int_1^t  \frac{\phi_k(t)-\phi_k(s)}{s} \, \mathrm{d}s. 
\end{align*}  Integrating by parts, we obtain
\begin{align*}
 \int_1^t  \frac{\phi_k(t)-\phi_k(s)}{s} \, \mathrm{d}s &  =  \big(\phi_k(t)-\phi_k(s)\big)\log s \, \Big|^{s=t}_{s=1} +\int_1^t  \phi_k'(s) \log s \, \mathrm{d}s \\ &= \int_1^t s^{-k} \log s \, \mathrm{d}s \geqslant t^{-k} \int_1^t  \log s \, \mathrm{d}s.
\end{align*} Consequently, for $t\geqslant 3/2$ 
\begin{align*}
\mathcal{U}(t) &   \gtrsim  \varepsilon^p \langle A_k(t)\rangle ^{-1} t^{-k} \int_1^t  \log s \, \mathrm{d}s \geqslant \varepsilon^p \langle A_k(t)\rangle ^{-1} t^{-k} \int_{2t/3}^t  \log s \, \mathrm{d}s \geqslant (1/3) \varepsilon^p \langle A_k(t)\rangle ^{-1} t^{1-k}   \log (2t/3) \\
& \gtrsim  \varepsilon^p  \log (2t/3) ,
\end{align*} where in the last line we applied $t\approx \langle A_k(t)\rangle ^{\frac{1}{1-k}}$ for $t\geqslant 1$. Thus, the proof is over.
\end{proof}

In order to conclude the proof of Theorem \ref{Theorem critical case p0} it remains to use an iteration argument together with a slicing procedure for the domain of integration. This procedure consists in determining a sequence of lower bound estimates for $\mathcal{U}(t)$ (indexes by $j\in\mathbb{N}$) and, then, proving that $\mathcal{U}(t)$ may not be finite for $t$ over a certain $\varepsilon$ -- dependent threshold by taking the limit as $j\to \infty$. Since the iteration frame \eqref{iteration frame} and the first lower bound estimate \eqref{first logarithmic lower bound mathcalU} are formally identical to those in \cite[Section 2.3]{PTY20} (of course, for different values of the critical exponent $p$), the iteration argument can be rewritten verbatim as in \cite[Section 2.4]{PTY20}.

 Finally, we show how the previous steps can be adapted to the treatment of the case $\mu\in[0,k]$. According to Remark \ref{Remark transf v} , through the transformation $v(t,x)=t^{\mu -1} u(t,x)$, we may consider the transformed semilinear Cauchy problem \eqref{Semi EdeS k damped transf v} for $v$. Note that $v_0\doteq u_0$ and $v_1\doteq u_1+(1-\mu)u_0$ satisfies the same assumptions for $u_0$ and $u_1$ in the statement of Theorem \ref{Theorem critical case p0} in this case (nonnegativeness and nontriviality, compactly supported and belongingness to the energy space $H^1(\mathbb{R}^n)\times L^2(\mathbb{R}^n)$). Of course, we may introduce the auxiliary function $\xi_q(t,s,x;k,2-\mu),\xi_q(t,s,x;k,2-\mu)$ as in \eqref{def xi q}, \eqref{def eta q} replacing $\mu$ by $2-\mu$. In Corollary \ref{Corollary fund ineq}, nevertheless,  we have to replace the fundamental identity \eqref{fundametal integral equality} by 
\begin{align*}
\int_{\mathbb{R}^n} v(t,x) \, \xi_q(t,t,x;k,2-\mu) \, \mathrm{d}x & = \varepsilon \int_{\mathbb{R}^n} v_0(x) \,  \xi_q(t,1,x;k,2-\mu)  \, \mathrm{d}x 
+ \varepsilon \,  A_k(t) \int_{\mathbb{R}^n} v_1(x) \, \eta_q(t,s,x;k,2-\mu)  \, \mathrm{d}x \notag \\ & \quad +\int_1^t  (\phi_k(t)-\phi_k(s)) s^{(1-\mu)(p-1)}  \int_{\mathbb{R}^n} |v(s,x)|^p \eta_q(t,s,x;k,2-\mu) \, \mathrm{d}x \, \mathrm{d}s.
\end{align*} As we have already pointed out in Remark \ref{Remark transf v}, the estimates in \eqref{lower bound estimate y0(t,s;lambda,k)} and \eqref{lower bound estimate y_1(t,s;lambda,k)} holds true in this case with $2-\mu$ instead of $\mu$ (we recall that this was the actual reason to consider the transformed problem in place of the original one). Moreover, also the lower bound estimate in \eqref{lower bound int |u|^p} is valid for $v$, provided that we replace $\mu$ by $2-\mu$. Accordingly to what we have just remarked, the suitable time -- dependent functional to study for the transformed problem is $$\mathcal{V}(t)\doteq t^{1-\frac{\mu+k}{2}} \int_{\mathbb{R}^n} v(t,x) \, \xi_q(t,t,x;k,2-\mu) \, \mathrm{d} x.$$ In fact, $\mathcal{V}$ satisfies $\mathcal{V}(t)\gtrsim \varepsilon$ for $t\in [1,T)$ and, furthermore, it is possible to derive for $\mathcal{V}$  completely analogous iteration frame and  first logarithmic lower bound, respectively,  as the ones for $\mathcal{U}$ in \eqref{iteration frame} and \eqref{first logarithmic lower bound mathcalU}, respectively. We point out that both for the iteration frame and for the first  logarithmic lower bound estimate the time -- dependent factor $t^{(1-\mu)(p-1)}$ in the nonlinearity compensates the modifications due to the replacement of $\mu$ by $2-\mu$ in the proofs of Propositions \ref{Proposition iteration frame} and \ref{Proposition first logarithmic lower bound mathcalU}.

\section{Critical case: part II}  \label{Section critical case p1}

In Section \ref{Section subcritical case}, we derived the upper bound for the lifespan in the subcritical case, whereas in Section \ref{Section critical case p0} we studied the critical case $p=p_0\big(k,n+\frac{\mu}{1-k}\big)$. It remains to consider the critical case $p=p_1(k,n)$, that is, when $\mu\geqslant \mu_0(k,n)$. In this section, we are going to prove  Theorem \ref{Theorem critical case p1}. In this critical case, our approach will be based on a basic iteration argument combined with the slicing procedure introduced for the first time in the paper \cite{AKT00}. The parameters characterizing the slicing procedure are given by the sequence $\{\ell_j\}_{j\in\mathbb{N}}$, where $\ell_j\doteq 2-2^{-(j+1)}$.

As time -- depending functional we consider the same one studied in Section \ref{Section subcritical case}, namely, $U_0$ defined in \eqref{def U}. Hence, since $p=p_1(k,n)$ is equivalent to the condition 
\begin{align}\label{condition p=p1 equiv}
(1-k)n(p-1)=2,
\end{align}
 we can rewrite \eqref{iter fram subcrit U0} as 
\begin{align}\label{iteration frame 2nd crit case}
U_0(t)& \geqslant  C \int_1^t \tau^{-\mu}\int_1^\tau s^{\mu -2}  (U_0(s))^p\,  \mathrm{d}s \,  \mathrm{d}\tau  
\end{align} for any $t\in (1,T)$ and  for a suitable positive constant $C>0$. Let us underline that \eqref{iteration frame 2nd crit case} will be the iteration frame in the iteration procedure for the critical case $p=p_1(k,n)$. 

We know that $U_0(t)\geqslant K \varepsilon$ for any $t\in (1,T)$ and for  a suitable positive constant $K$, provided that $u_0,u_1$ are nonnegative, nontrivial and compactly supported (cf. the estimate in \eqref{lb estimate U0 trivial}). Thus,
\begin{align}
U_0(t) & \geqslant C K^p \varepsilon^p \int_1^t \tau^{-\mu}\int_1^\tau s^{\mu -2}  \,  \mathrm{d}s \,  \mathrm{d}\tau   \geqslant  C K^p \varepsilon^p \int_1^t \tau^{-\mu-2}\int_1^\tau (s-1)^{\mu }  \,  \mathrm{d}s \,  \mathrm{d}\tau \notag \\
& = \frac{C K^p\varepsilon^p}{ \mu+1}  \int_1^t \tau^{-\mu-2} (\tau-1)^{\mu +1} \,  \mathrm{d}\tau \geqslant \frac{C K^p\varepsilon^p}{ \mu+1} \int_{\ell_0}^t \tau^{-\mu-2} (\tau-1)^{\mu +1} \,  \mathrm{d}\tau \notag \\ &   \geqslant \frac{C K^p\varepsilon^p}{ 3^{\mu+1}(\mu+1)}  \int_{\ell_0}^t \tau^{-1} \,  \mathrm{d}\tau \geqslant   \frac{C K^p\varepsilon^p}{ 3^{\mu+1}(\mu+1)}  \log\left(\frac{t}{\ell_0}\right) \label{1st lower bound U p=p1}
\end{align} for $t \geqslant \ell_0=3/2$, where we used $\tau\leqslant 3(\tau-1)$ for $\tau \geqslant \ell_0$ in the second last step.

Therefore, by using recursively \eqref{iteration frame 2nd crit case}, we prove now the sequence of lower bound estimates
\begin{align}\label{lower bound U j p=p1}
U_0(t)\geqslant K_j  \left(\log \left(\frac{t}{\ell_j}\right)\right)^{\sigma_j} \qquad \mbox{for} \ t\geqslant \ell_j
\end{align} for any $j\in \mathbb{N}$, where $\{K_j\}_{j\in\mathbb{N}}$, $\{\sigma\}_{j\in\mathbb{N}}$ are sequences of positive reals that we determine afterwards in the inductive step.

Clearly \eqref{lower bound U j p=p1} for $j=0$ holds true thanks to \eqref{1st lower bound U p=p1}, provided that $K_0= (CK^p  \varepsilon^p)/(3^{\mu+1}(\mu+1))$ and $\sigma_0=1$. Next we show the validity of \eqref{lower bound U j p=p1} by using an inductive argument. Assuming that  \eqref{lower bound U j p=p1} is satisfied for some $j\geqslant 0$, we prove \eqref{lower bound U j p=p1} for $j+1$. According to this purpose, we plug \eqref{lower bound U j p=p1} in \eqref{iteration frame 2nd crit case}, so, after shrinking the domain of integration, we get
\begin{align*}
U_0(t) & \geqslant CK_j^p  \int_{\ell_j}^t \tau^{-\mu}\int_{\ell_j}^\tau s^{\mu -2}    \left(\log \left( \tfrac{s}{\ell_j}\right)\right)^{\sigma_j p}  \mathrm{d}s \,  \mathrm{d}\tau  
\end{align*} for $t\geqslant \ell_{j+1}$. If we shrink the domain of integration to $[(\ell_j/\ell_{j+1})\tau,\tau]$ in the $s$ -- integral (this operation is possible for $\tau\geqslant \ell_{j+1}$), we find
\begin{align*}
U_0(t) & \geqslant CK_j^p  \int_{\ell_{j+1}}^t \tau^{-\mu-2}\int_{\tfrac{\ell_j \tau}{\ell_{j+1}}}^\tau s^{\mu }    \left(\log \left( \tfrac{s}{\ell_j}\right)\right)^{\sigma_j p}  \mathrm{d}s \,  \mathrm{d}\tau   \\ &\geqslant CK_j^p  \int_{\ell_{j+1}}^t \tau^{-\mu-2} \left(\log \left( \tfrac{\tau }{\ell_{j+1}}\right)\right)^{\sigma_j p} \int_{\tfrac{\ell_j \tau}{\ell_{j+1}}}^\tau \left(s-\tfrac{\ell_j}{\ell_{j+1}} \tau\right)^{\mu }     \mathrm{d}s \,  \mathrm{d}\tau  \\
& = CK_j^p (\mu+1)^{-1} \left(1-\tfrac{\ell_j}{\ell_{j+1}} \right)^{\mu +1}      \int_{\ell_{j+1}}^t \tau^{-1} \left(\log \left( \tfrac{\tau }{\ell_{j+1}}\right)\right)^{\sigma_j p}  \mathrm{d}\tau \\
& \geqslant 2^{-(j+3)(\mu+1)} CK_j^p (\mu+1)^{-1}  (1+p\sigma_j)^{-1}     \left(\log \left( \tfrac{t }{\ell_{j+1}}\right)\right)^{\sigma_j p+1}
\end{align*} for $t\geqslant \ell_{j+1}$, where in the last step we applied the inequality $1-\ell_j/\ell_{j+1}>2^{-(j+3)}$. Hence, we proved \eqref{lower bound U j p=p1} for $j+1$ provided that 
\begin{align*}
K_{j+1}\doteq 2^{-(j+3)(\mu+1)} C (\mu+1)^{-1}  (1+p\sigma_j)^{-1}  K_j^p \quad \mbox{and} \quad \sigma_{j+1} \doteq 1 + \sigma_j p .
\end{align*}

Let us establish a suitable lower bound for $K_j$.  Using iteratively the relation $\sigma_j=1+p\sigma_{j-1}$ and the initial exponent $\sigma_0=1$, we have
\begin{align}\label{explit expressions sigmaj}
\sigma_j & = \sigma_0 p^j +\sum_{k=0}^{j-1} p^k =  \tfrac{p^{j+1}-1}{p-1}.
\end{align} In particular, the inequality $\sigma_{j-1}p+1= \sigma_j\leqslant p^{j+1}/(p-1) $ yields
\begin{align} \label{lower bound Kj no.1}
K_j \geqslant L\, \big(2^{\mu+1} p\big)^{-j} K^p_{j-1}
\end{align} for any $j\geqslant 1$, where $L\doteq {2^{-2(\mu+1)}} C (\mu+1)^{-1} (p-1)/p$. Applying the logarithmic function to both sides of  \eqref{lower bound Kj no.1} and using the resulting inequality iteratively, we obtain
\begin{align*}
\log K_j & \geqslant p \log K_{j-1} -j \log \big(2^{\mu+1} p\big)+\log L \\
& \geqslant \ldots \geqslant p^j \log K_0 -\Bigg(\sum_{k=0}^{j-1}(j-k)p^k \Bigg)\log \big(2^{\mu+1} p\big)+\Bigg(\sum_{k=0}^{j-1} p^k \Bigg)\log L  \\
& = \! p^j \left(\!\log \left(\frac{CK^p \varepsilon^p}{3^{\mu+1} (\mu+1)} \right) -\frac{p\log \big(2^{\mu+1} p\big)}{(p-1)^2}+\frac{\log L }{p-1}\!\right)\!+\!\left( \frac{j}{p-1}+\frac{p}{(p-1)^2}\!\right)\log \big(2^{\mu+1} p\big)-\frac{\log L}{p-1},
\end{align*} where we applied again the identities in \eqref{summation identities}. Let us define $j_2=j_2(n,p,k,\mu)$ as the smallest nonnegative integer such that $$j_2\geqslant \frac{\log L}{\log \big(2^{\mu+1} p\big)}-\frac{p}{p-1}.$$ Consequently, for any $j\geqslant j_2$ the following estimate holds
\begin{align} \label{lower bound Kj no.2}
\log K_j & \geqslant  p^j \left(\log \left( \frac{CK^p \varepsilon^p}{3^{\mu+1} (\mu+1)}\right) -\frac{p\log \big(2^{\mu+1} p\big)}{(p-1)^2}+\frac{\log L}{p-1}\right) = p^j \log ( N \varepsilon^p),
\end{align} where $N\doteq 3^{-(\mu+1)}CK^p (\mu+1)^{-1} \big(2^{\mu+1}p\big)^{-p/(p-1)^2}L^{1/(p-1)}$. 

Combining \eqref{lower bound U j p=p1}, \eqref{explit expressions sigmaj} and \eqref{lower bound Kj no.2}, we arrive at
\begin{align*}
U_0(t)&\geqslant \exp \left( p^j\log(N\varepsilon^p)\right)  \left(\log \left(\tfrac t{\ell_j}\right)\right)^{\sigma_j}  \\ & \geqslant \exp \left( p^j\log(N\varepsilon^p)\right)  \left( \tfrac 12 \log t \right)^{(p^{j+1}-1)/(p-1)} \\
&= \exp \left( p^j\log\left( 2^{-p/(p-1)}N\varepsilon^p \left(\log t \right)^{p/(p-1)}\right) \right)  \left( \tfrac 12 \log  t \right)^{-1/(p-1)} 
\end{align*} for $t\geqslant 4$ and for any $j\geqslant j_2$, where we employed the inequality $\log (t/ \ell_j)\geqslant \log(t/2) \geqslant (1/2) \log t$ for $t\geqslant 4$.
Introducing the notation $H(t,\varepsilon)\doteq 2^{-p/(p-1)}N\varepsilon^p \left(\log t\right)^{p/(p-1)}$, the previous estimate may be rewritten as
 \begin{align}\label{final lower bound U}
 U_0(t)&\geqslant \exp \big( p^j \log H(t,\varepsilon)\big)  \left( \tfrac 12 \log t \right)^{-1/(p-1)} 
 \end{align} for $t\geqslant 4$ and any $j\geqslant j_2$.

 If we fix  $\varepsilon_0=\varepsilon_0(n,p,k,\mu,R,u_0,u_1)$ such that
 \begin{align*}
 \exp \left(2N^{-(1-p)/p}\varepsilon_0^{-(p-1)}\right)\geqslant 4,
 \end{align*}
 then, for any $\varepsilon\in (0,\varepsilon_0]$ and for $t> \exp \left(2N^{-(1-p)/p}\varepsilon^{-(p-1)}\right)$ we have $t\geqslant 4$ and $H(t,\varepsilon)>1$. Therefore, for any $\varepsilon\in (0,\varepsilon_0]$ and for $t> \exp \left(2N^{-(1-p)/p}\varepsilon^{-(p-1)}\right)$ letting $j\to \infty$ in \eqref{final lower bound U} we see that the lower bound for $U_0(t)$ blows up and, consequently, $U_0(t)$ may not be finite as well. Summarizing, we proved  that $U_0$ blows up in finite time and, moreover, we showed the upper bound estimate for the lifespan $$T(\varepsilon)\leqslant \exp \left(2N^{-(1-p)/p}\varepsilon^{-(p-1)}\right).$$ 
Hence the proof of Theorem \ref{Theorem critical case p1} in the critical case $p=p_1(k,n)$ is complete.


\section{Final remarks}

According to the results we obtained in Theorems \ref{Theorem subcritical case}, \ref{Theorem critical case p0} and \ref{Theorem critical case p1} it is quite natural to conjecture that $$\max\big\{p_0\big(k,n+\tfrac{\mu}{1-k}\big),p_1(k,n)\big\}$$ is the critical exponent for the semilinear Cauchy problem \eqref{Semi EdeS k damped}, although the global existence of small data solutions is completely open in the supercritical case. Furthermore, this exponent is consistent with other models studied in the literature.

In the flat case $k=0$, this exponent coincide with $\max\{p_{\mathrm{Str}}(n+\mu),p_{\mathrm{Fuj}}(n)\}$ which in many subcases has been showed to be optimal in the case of semilinear wave equation with time -- dependent scale -- invariant damping, see \cite{D15,DLR15,DL15,NPR16,IS17,TuLin17,PalRei18,PT18,Pal18odd,Pal18even,D20} and references therein for further details.

 On the other hand, in the undamped case $\mu=0$ (that is, for the semilinear wave equation with speed of propagation $t^{-k}$) the exponent $\max\{p_0(k,n),p_1(k,n)\}$ is consistent with the result for the generalized semilinear Tricomi equation (i.e., the semilinear wave equation with speed of propagation $t^\ell$, where $\ell>0$) obtained in the recent works \cite{HeWittYin17,HeWittYin17Crit,HeWittYin18,LinTu19}.
 
  Clearly, in the very special case $\mu=0$ and $k=0$, our result is nothing but a blow-up result for the classical semilinear wave equation for exponents below $p_{\mathrm{Str}}(n)$, which is well -- known to be optimal (for a detailed historical overview on Strauss' conjecture and a complete list of references we address the reader to the introduction of the paper \cite{TW11}).

As we have already explained in the introduction, for $\mu=2$ and $k=2/3$ the equation in \eqref{Semi EdeS k damped} is the semilinear wave equation in the Einstein -- de Sitter spacetime. In particular, our result is a natural generalization of the results in \cite{GalYag17EdS,PTY20}.

Furthermore, we underline explicitly the fact that the exponent $p_0\big(k,n+\tfrac{\mu}{1-k}\big)$ for \eqref{Semi EdeS k damped} is obtained by the corresponding exponent in the not damped case $\mu=0$ via a formal shift in the dimension of magnitude $\tfrac{\mu}{1-k}$. This phenomenon is due to the threshold nature of the time -- dependent coefficient of the damping term and it has been widely observed in the special case $k=0$ not only for the semilinear Cauchy problem with power nonlinearity but also with nonlinarity of derivative type $|u_t|^p$ (see \cite{PT19}) or weakly coupled system (see \cite{CP19,Pal19,PT19}).

Finally, we have to point out that after the completion of the final version of this work, we found out the existence of the paper \cite{TW20}, where the same model is considered. We stress that the approach we used in the critical case is completely different, and that we slightly improved their result, by removing the assumption on the size of the support of the Cauchy data (cf. \cite[Theorem 2.3]{TW20}), even though we might not cover the full range $\mu \in [0,\mu_0(k,n)]$ in the critical case due to the assumption $\mu\not\in (k,2-k)$.

\section*{Acknowledgments}

A. Palmieri 
is supported by the GNAMPA project `Problemi stazionari e di evoluzione nelle equazioni di campo nonlineari dispersive'. The author would like to  acknowledge Karen Yagdjian (UTRGV), who first introduced him to the model considered in this work.

\appendix


\begin{thebibliography}{00}

\bibitem{AKT00}  Agemi, R.,  Kurokawa, Y.,  Takamura, H.:
	\newblock{Critical curve for $p$-$q$ systems of nonlinear wave equations in three space dimensions.}
	\newblock{\em J. Differential Equations} {\bf 167}(1) (2000), 87--133.


\bibitem{CP19} Chen, W., Palmieri, A.:
\newblock{Weakly coupled system of semilinear wave equations with distinct scale-invariant terms in the linear part.}
 \newblock{Z. Angew. Math. Phys.}  {\bf 70}: 67, (2019)

\bibitem{ChenPal19MGT} Chen, W., Palmieri, A.:
\newblock{Nonexistence of global solutions for the semilinear Moore -- Gibson -- Thompson equation in the conservative case.} \newblock{\em Discrete and Continuous Dynamical Systems - A}  {\bf 40}(9) (2020), 5513--5540. doi: 10.3934/dcds.2020236 

\bibitem{ChenPal19SWENM} Chen, W., Palmieri, A.:
\newblock{Blow-up result for a semilinear wave equation with a nonlinear memory term.} To appear in Springer INdAM Series.

\bibitem{D15} D'Abbicco, M.:
\newblock{The threshold of effective damping for semilinear wave equations.} \newblock{\em Math. Meth. Appl. Sci.} {\bf 38}(6) (2015), 10032--1045.

\bibitem{D20} D'Abbicco, M.:
\newblock{The semilinear Euler-Poisson-Darboux equation: a case of wave with critical dissipation.} \newblock{Preprint, arXiv:2008.08703v2 (2020).} 

\bibitem{DL15} D'Abbicco, M., Lucente, S.:
\newblock{NLWE with a special scale invariant damping in odd space dimension.} In: \newblock{\em  Dynamical
Systems, Differential Equations and Applications} \newblock{\em 10th AIMS Conference, Suppl.} 2015, 312--319.


\bibitem{DLR15} D'Abbicco, M., Lucente, S., Reissig, M.:
\newblock{A shift in the Strauss exponent for semilinear wave equations with a not effective damping.} \newblock{\em  J. Differential Equations} {\bf 259}(10) (2015), 5040--5073.




\bibitem{GalKinYag10}  Galstian, A., Kinoshita, T., Yagdjian, K.: \newblock{ A note on wave equation in Einstein and de Sitter space-time.} \newblock{\em  J. Math. Phys.} {\bf 51}(5) (2010), 052501

\bibitem{GalYag14} Galstian, A., Yagdjian, K.:  \newblock{Microlocal analysis for waves propagating in Einstein \& de Sitter spacetime.} \newblock{\em Math. Phys. Anal. Geom.} {\bf 17} (1-2) (2014), 223--246.

\bibitem{GalYag15} Galstian, A., Yagdjian, K.:
\newblock{Global solutions for semilinear Klein-Gordon equations in FLRW spacetimes.} \newblock{\em Nonlinear Anal.} {\bf 113} (2015), 339--356.


\bibitem{GalYag17EdS} Galstian, A., Yagdjian, K.: 
\newblock{Finite lifespan of solutions of the semilinear wave equation in the Einstein-de Sitter spacetime.} \newblock{\em Reviews in Mathematical Physics} (2019), doi: 10.1142/S0129055X2050018X




\bibitem{HeWittYin17} He, D., Witt, I., Yin, H.: \newblock{On the global solution problem for semilinear generalized Tricomi equations, I.} \newblock{\em Calc. Var. Partial Differential Equations} {\bf 56}(2) (2017), Art. 21

\bibitem{HeWittYin17Crit} He, D., Witt, I., Yin, H.: \newblock{On semilinear Tricomi equations with critical exponents or in two space dimensions.} \newblock{\em J. Differential Equations} {\bf 263}(12) (2017), 8102--8137.

\bibitem{HeWittYin18} He, D., Witt, I., Yin, H.: \newblock{On semilinear Tricomi equations in one space dimension
.} \newblock{Preprint, arXiv:1810.12748 (2018).}


\bibitem{IS17} Ikeda, M., Sobajima,  M.:
\newblock{Life-span of solutions to semilinear wave equation with time-dependent critical damping for specially localized initial data.}
\newblock{Math. Ann.} (2018),   https://doi.org/10.1007/s00208-018-1664-1.




\bibitem{LT18Scatt} Lai, N.A., Takamura, H.:
\newblock{Blow-up for semilinear damped wave equations with subcritical exponent in the scattering case.} \newblock{ Nonlinear Anal.} {\bf 168} (2018), 222--237.


\bibitem{LT18Glass}  Lai, N.A., Takamura, H.:
\newblock{Nonexistence of global solutions of nonlinear wave equations with weak time-dependent damping related to Glassey's conjecture.} \newblock{ Differential and Integral Equations} {\bf 32}(1-2) (2019), 37--48.

\bibitem{LT18ComNon} Lai, N.A., Takamura, H.:
\newblock{Nonexistence of global solutions of wave equations with weak time-dependent damping and combined nonlinearity.} \newblock{ Nonlinear Anal. Real World Appl.} {\bf 45} (2019), 83--96.  

\bibitem{LTW17} Lai, N.A., Takamura, H., Wakasa, K.:
\newblock{Blow-up for semilinear wave equations with the scale invariant damping and super-Fujita exponent.} \newblock{ J. Differential Equations.} {\bf 263} (2017), 5377--5394. 


\bibitem{LinTu19} Lin, J., Tu, Z.: \newblock{Lifespan of semilinear generalized Tricomi equation with Strauss type exponent.}  \newblock{Preprint, arXiv:1903.11351v2  (2019).}






\bibitem{NPR16}  Nunes do Nascimento, W.,  Palmieri, A.,  Reissig, M.: 
\newblock{Semi-linear wave models with power non-linearity and scale-invariant time-dependent mass and dissipation.}
\newblock{\em Math. Nachr.}  {\bf 290}(11-12) (2017), 1779--1805.

\bibitem{NIST10}  Olver, F.W.J., Lozier, D.W.,  Boisvert, R.F.,  Clark, C.W. (Eds.): \newblock{\em NIST Handbook of Mathematical Functions.} Cambridge University Press, New York, NY (2010). 

\bibitem{Pal18odd}  Palmieri, A.:
\newblock{Global existence results for a semilinear wave equation with scale-invariant damping and mass in odd space dimension.} 
\newblock{ In M. D’Abbicco et al. (eds.),} \newblock{\em New Tools for Nonlinear PDEs and Application}, \newblock{Trends in Mathematics,}  https://doi.org/10.1007/978-3-030-10937-0$\_$12

\bibitem{Pal18even} Palmieri, A.:
\newblock{A global existence result for a semilinear wave equation with scale-invariant damping and mass in even space dimension.} \newblock{\em Math Meth Appl Sci.} {\bf 42}(8) (2019), 2680--2706. 


\bibitem{Pal19} Palmieri, A.:
\newblock{A note on a conjecture for the critical curve of a weakly coupled system of semilinear wave equations with scale-invariant lower order terms.} \newblock{\em Math Meth Appl Sci.} (2020) 1--30. https://doi.org/10.1002/mma.6412

\bibitem{PTY20}	 Palmieri, A.:
\newblock{Lifespan estimates for local solutions to the semilinear wave equation in Einstein -- de Sitter spacetime.}  \newblock{Preprint, arXiv:2009.04388 (2020).} 
	

\bibitem{PalRei18} Palmieri, A.,  Reissig, M.:
\newblock{A competition between Fujita and Strauss type exponents for blow-up of semi-linear wave equations with scale-invariant damping and mass.}
\newblock{\em J. Differential Equations} {\bf 266}(2-3) (2019), 1176--1220.

\bibitem{PalTak19}  Palmieri, A., Takamura, H.: 
	\newblock{Blow-up for a weakly coupled system of semilinear damped wave equations in the scattering case with power nonlinearities.} \newblock{\em Nonlinear Analysis} {\bf 187} (2019), 467--492.  https://doi.org/10.1016/j.na.2019.06.016


	
	
\bibitem{PalTak19mix}  Palmieri, A., Takamura, H.: 
	\newblock{Nonexistence of global solutions for a weakly coupled system of semilinear damped wave equations in the scattering case with mixed nonlinear terms.}  \newblock{Preprint, arXiv:1901.04038 (2019).} 
	


\bibitem{PT18}  Palmieri, A.,  Tu, Z.:
\newblock{Lifespan of semilinear wave equation with scale invariant dissipation and mass and sub-Strauss power nonlinearity.} \newblock{\em J. Math. Anal. Appl.} {\bf 470}(1)  (2019), 447--469.

\bibitem{PT19}  Palmieri, A.,  Tu, Z.:
\newblock{A blow-up result for a semilinear wave equation with scale-invariant damping and mass and nonlinearity of derivative type.} \newblock{Preprint, arXiv:1905.11025v2 (2019).} 




\bibitem{TW11} Takamura, H., Wakasa, K.:
\newblock{The sharp upper bound of the lifespan of solutions to critical semilinear wave equations in high dimensions.}
\newblock{J. Differential Equations} {251}(4-5) (2011), 1157--1171.  

\bibitem{TuLin17} Tu, Z., Lin, J.:
\newblock{A note on the blowup of scale invariant damping wave equation with sub-Strauss exponent.}
\newblock{Preprint, arXiv:1709.00886v2 (2017).} 

\bibitem{TW20} Tsutaya,K., Wakasugi, Y.:
\newblock{Blow up of solutions of semilinear wave equations in Friedmann – Lema\^{i}tre – Robertson – Walker spacetime} \newblock{\em J. Math. Phys.} {\bf 61}, 091503 (2020). https://doi.org/10.1063/1.5139301

\bibitem{WakYor18}  Wakasa, K., Yordanov, B.:
\newblock{Blow-up of solutions to critical semilinear wave equations with variable coefficients.}
\newblock{\em J. Differential Equations} {\bf 266}(9) (2019), 5360--5376.  

\bibitem{WakYor18Damp}  Wakasa, K., Yordanov, B.:
\newblock{On the nonexistence of global solutions for critical semilinear wave equations with damping in the scattering case.}
\newblock{\em Nonlinear Anal.} {\bf 180}) (2019),  67--74. 









\bibitem{YZ06} Yordanov, B.T., Zhang, Q.S.: \newblock{Finite time blow up for critical wave equations in high dimensions.} \newblock{\em J.
Funct. Anal.} {\bf 231}(2) (2006), 361--374. 



\bibitem{Zhou07}  Zhou, Y.: \newblock{Blow up of solutions to semilinear wave equations with critical exponent in high dimensions.} \newblock{\em Chin. Ann.
Math. Ser. B} {\bf 28} (2007),  205--212.


\end{thebibliography}
\end{document}